\documentclass[a4paper, 11.5pt]{amsart}
\usepackage{amssymb}
\usepackage[a4paper, total={6in, 8in}]{geometry}
\usepackage[utf8]{inputenc}
\usepackage[usenames, dvipsnames]{color}
\newtheorem{theorem}{Theorem}[section]
\newtheorem*{theorem*}{Theorem}

\newtheorem{lemma}[theorem]{Lemma}
\newtheorem{corollary}[theorem]{Corollary}

\newcommand{\C}{\mathbb C}
\newcommand{\R}{\mathbb R}

\newcommand{\D}{\mathbb D}
\newcommand{\T}{\mathbb T}
\newcommand{\B}{\mathbb B}

\newcommand{\Q}{\mathbb Q}

\usepackage{ulem}

\title[Radial limits of solutions to elliptic PDE's]{Radial limits of solutions to elliptic partial differential equations}

\dedicatory{}

\author[P. M. Gauthier]{Paul M. Gauthier}
\address[Paul M. Gauthier]{D\'epartement de math\'ematiques et de statistique, Universit\'e de Montr\'eal, Montr\'eal, Qu\'ebec, Canada H3C3J7.}
\email{paul.m.gauthier@umontreal.ca}

\author[M. Shirazi]{Mohammad Shirazi}
\address[Mohammad Shirazi]{Department of Mathematics and Statistics, Georgia State University, Atlanta, GA, USA.}
\email{mshirazi@gsu.edu}

\begin{document}

\dedicatory{}

\begin{abstract}
For certain elliptic differential operators $L,$  we study the behaviour of solutions to $Lu=0,$ as we tend to the boundary along radii in  strictly starlike domains in $\R^n, n\ge 3.$ Analogous results are obtained in other special domains. Our approach involves introducing harmonic line bundles as instances of Brelot harmonic spaces and  approximating continuous functions by harmonic functions on appropriate subsets. 
These approximation theorems on harmonic spaces yield interesting examples for approximation  by solutions of $Lu=0$ on some domains in $\R^n.$
\end{abstract}

\thanks{First author was supported by NSERC (Canada) grant RGPIN-2016-04107.}

\maketitle


\section{Introduction}
Denote by $\D$ the unit disc in the complex plane $\C$ and by  $\T$ the unit circle $\{z=e^{i\theta}:0\le\theta<2\pi\}.$ 
The motivation for this paper comes from the following classical result of Bagemihl and Seidel (see formula (8) in \cite{BS}). 

\begin{theorem}\label{Bagemihl-Seidel}
Let $E$ be  an $F_\sigma$ set of first category on the unit circle $\T.$ 
Then, for every function $\varphi$ continuous on the unit disc  $\D,$ there is a harmonic  function $u$ on $\D$ such that,  
$$ 
		\lim_{\rho \nearrow 1}\big(u-\varphi\big)\big(\rho e^{i\theta}\big)=0, \quad \mbox{for all} \quad e^{i\theta}\in E.
$$
\end{theorem}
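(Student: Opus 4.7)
The plan is to construct $u$ as a uniformly convergent series $\sum u_n$ of harmonic functions on $\D$, built inductively so that the partial sums approximate $\varphi$ better and better along radial fingers reaching out to $E$. Since $E$ is $F_\sigma$ and of first category in $\T$, I decompose $E=\bigcup_n E_n$ with each $E_n$ closed and nowhere dense in $\T$ and $E_n\subseteq E_{n+1}$. Pick radii $r_n\nearrow 1$ and numbers $s_n,t_n\in(r_n,1)$ satisfying the overlap condition $s_{n+1}\le t_n$ and a bounded-lag condition $t_n\le r_{n+m}$ for some fixed integer $m$; for instance $r_n=1-2^{-n}$, $s_n=1-\tfrac{3}{4}2^{-n}$, $t_n=1-\tfrac{3}{16}2^{-n}$ works with $m=3$. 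Set
\[
K_n=\{\rho e^{i\theta}:s_n\le\rho\le t_n,\ e^{i\theta}\in E_n\},\qquad A_n=\overline{D(0,r_n)}\cup K_n.
\]
Because $\T\setminus E_n$ is dense in $\T$, one can route a path around each finger, so $\widehat{\C}\setminus A_n$ is connected; hence $A_n$ meets the topological hypothesis of a Mergelyan/Runge-type harmonic approximation theorem on $\D$.

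At step $n$, with $v_{n-1}=u_1+\cdots+u_{n-1}$, let $g_n$ on $A_n$ equal $0$ on $\overline{D(0,r_n)}$ and $\varphi-v_{n-1}$ on $K_n$. The two pieces are disjoint, so $g_n$ is continuous, and it is harmonic on the interior $D(0,r_n)$. To control later $u_k$ on the current $K_n$, I enlarge $A_n$ to absorb each previous finger $K_\ell$, $\ell<n$, extending $g_n$ by $0$ there and interpolating linearly in $\rho$ across any overlap $K_\ell\cap K_n$; the inductive bound $|v_\ell-\varphi|<2^{-\ell}$ on $K_\ell$ keeps $|g_n|\le 2\cdot 2^{-\ell}$ on $K_\ell$. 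The harmonic approximation theorem then supplies $u_n$ harmonic on $\D$ with $|u_n-g_n|<2^{-n}$ on the enlarged $A_n$. This yields simultaneously $|u_n|<2^{-n}$ on $\overline{D(0,r_n)}$, $|v_n-\varphi|<2^{-n}$ on $K_n$, and $|u_n|\le 3\cdot 2^{-\ell}$ on each earlier $K_\ell$.

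The bound on $\overline{D(0,r_n)}$ gives uniform convergence of $u=\sum u_n$ on compact subsets of $\D$, so $u$ is harmonic on $\D$. For $e^{i\theta}\in E$, pick $N$ with $e^{i\theta}\in E_n$ for all $n\ge N$; by the overlap condition, $\bigcup_{n\ge N}[s_n,t_n]=[s_N,1)$. For any $\rho e^{i\theta}$ with $\rho$ close to $1$, choose $n\ge N$ with $\rho\in[s_n,t_n]$; then
\[
|u-\varphi|(\rho e^{i\theta})\le|v_n-\varphi|+\sum_{k=n+1}^{n+m-1}|u_k|+\sum_{k\ge n+m}|u_k|\le 2^{-n}+3(m-1)\cdot 2^{-n}+2^{1-n-m}=O(2^{-n}),
\]
where the middle sum uses the earlier-finger bound (applied with $\ell=n$ at each step $k>n$) and the tail uses $K_n\subseteq\overline{D(0,r_k)}$ for $k\ge n+m$ from the bounded-lag condition. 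As $\rho\nearrow 1$, the index $n$ is forced to $\infty$, giving the desired radial limit.

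The main obstacle is supplying the harmonic Mergelyan/Runge approximation on $A_n$: uniform approximation of a continuous function on a compact $K\subset\D$ with $\widehat{\C}\setminus K$ connected and harmonic in its interior, by functions harmonic on $\D$. This is the harmonic analogue of Mergelyan's theorem and is classical in the disc (deducible from Runge's theorem for analytic functions by taking real parts, or by pole-pushing of Green-function potentials). The remaining difficulty is combinatorial: choosing parameters $r_n,s_n,t_n$ that realize overlap and bounded lag simultaneously, and absorbing previous fingers into each $A_n$ via continuous interpolation, so that the tail of $\sum u_n$ remains summable on every $K_n$.
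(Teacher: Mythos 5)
Your construction is correct, and it is essentially the approach the paper itself uses: Theorem \ref{Bagemihl-Seidel} is only cited there (from Bagemihl--Seidel), but the paper's proofs of its analogues (Theorems \ref{bundle}, \ref{beads}, \ref{elliptic BS}) follow exactly your scheme --- radial fingers over the closed nowhere dense pieces of $E$, a Mergelyan-type approximation on each compact stage of an exhaustion, and a telescoping series whose tail is controlled on the earlier fingers. The one hypothesis to state correctly: for the harmonic Mergelyan step, connectedness of $\widehat{\C}\setminus A_n$ is the right condition only for the Runge/pole-pushing half, while the passage from a function merely continuous on the fingers to one harmonic near $A_n$ also requires that $\C\setminus A_n$ be non-thin at every point of $K_n$; this holds because $E_n$ is nowhere dense, so $K_n$ contains no circle, whereas a set whose complement is thin at a point of $\R^2$ must contain arbitrarily small circles centred there --- precisely condition (i) of Theorem \ref{BG} and Case I of the proof of Theorem \ref{bundle}.
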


We remark that, since every set of first category is contained in an $F_\sigma$ set of first category, we may drop the requirement that $F$ be an $F_\sigma$ set.

We  extend this result to solutions of certain partial differential equations. 
On a domain $\Omega\subset \R^n, \, n\ge 3,$ consider the following partial differential operator
\begin{equation}\label{L}
	L=\sum_{i,j=1}^na_{ij}\frac{\partial^2}{\partial x_i\partial x_j}+\sum_{i=1}^nb_i\frac{\partial}{\partial x_i}+c.	
\end{equation}

Following \cite{GGG1994_compact}, we assume that 
$a_{ij}=a_{ji}\in C^{2,1},$  $b_i\in C^{1,1},$ $c\in C^{0,1}$ and $c\le 0,$
where $C^{k,1}$ denotes the class of functions on $\Omega$ which are $k$ times continuously differentiable, the 
$k^{th}-$order
 partial derivatives being locally Lipschitz. 
We also assume that the associated quadratic form $(a_{ij})$ is positive definite on $\Omega,$ so $L$ is elliptic. A $C^2-$smooth solution
of $Lu=0,$ on an open subset $U\subset\Omega,$ will be called an $L-$harmonic function on $U$ and solutions (in the sense of distributions) of $Lu\le 0,$ which are lower semicontinuous, will be called $L-$superharmonic functions on $U.$ -
We assume that there is an $L-$superharmonic function $> 0$ on $\Omega$ which is not actually $L-$harmonic.

A subset $E\subset\Omega$ is said to be {$L-$\it thin at a point} $p\in \Omega$ if $p$ is not a limit point of $E$ or, when $p$ is a limit point of $E$, if there is an $L-$superharmonic function $u$ on an open neighbourhood of $p$, such that
$$
\liminf_{z\to p,\, z\in E} u(z)> u(p).
$$
A point $p$ in a subset $F$ of  $\Omega$ is said to be in the {\it $L-$fine interior} of $F,$ if $ \Omega\setminus F$ is $L$-thin at $p.$ 

 A subset $U\subset \Omega$ is said to be {\it $L-$finely open}
 in $\Omega,$ if all points of $U$ are $L-$fine interior points of $U.$ The family of $L$-finely open subsets of $\Omega$ determine a topology on $\Omega,$ called the {\it $L-$fine topology
} on $\Omega.$ 
The fine topology can be defined on a Riemannian manifold $M,$ where the Laplace-Beltrami operator $\Delta$ plays the role of the operator $L.$  In this case we shall often omit $\Delta$ from the notation and simply speak of fine thinness, fine openness, the fine topology, etc. In particular, there is a natural fine topology on submanifolds of $\R^n.$
If $\Omega=\R^{1},$
then the fine topology coincides with the standard topology on $\R^1.$

The following theorem is a natural higher dimensional analogue of Theorem \ref{Bagemihl-Seidel},  not for the ball, but for half-space $U=\R^{n-1}\times (0,+\infty)$ in $\R^n.$ This result is due to Essen and Gardiner \cite{EG} for the case that $L$ is the Laplacian $\Delta.$

\begin{theorem}\label{EG}
For $ n\ge 3,$   let $L$ be an operator of the form (\ref{L}) on  the upper half-space  $U=\R^{n-1}\times(0,+\infty)$ of $\R^n=\R^{n-1}\times\R,$  
let $E'=\cup_j E'_j$ be a countable union  of subsets of $\R^{n-1},$ such that the
Euclidean closure of each $E'_j$ has
empty fine interior in $\R^{n-1}.$
Then, for every function $\varphi$ continuous on $U,$ there is an $L-$harmonic  function $h$ on $U$ such that,
$$ 
\big(h-\varphi\big)(x^\prime,y)\to 0, \quad \mbox{if} 
\quad  y\searrow 0  \quad \mbox{for all} \quad x^\prime  =(x_1, \cdots, x_{n-1}) \in E'.
$$
\end{theorem}

Theorem \ref{EG} is a particular case of our Theorem \ref{vertical EG}, whose proof is based on an approximation theorem for solutions of partial differential equations, due to Gardiner, Goldstein and GowriSankaran \cite{GGG1994_compact}.

An analogue of this result can readily be formulated for radial limits in the ball. 
For $x\in \R^n\setminus\{0\},$ we have the polar representation $x=(\theta,\rho),$ where $\rho=|x|$ is in $(0,+\infty)$  and $\theta = x/|x|=x/\rho,$ with $\theta$ in the unit sphere $S^{n-1}.$ For $x=0,$ we may write $0=(\theta,0),$ with $\theta\in S^{n-1}$ arbitrary.  With these coordinates, $\R^n=S^{n-1}\times[0,+\infty)$ and the open unit ball has the representation $\B^{n}=S^{n-1}\times[0,1).$ 
In the present paper, as a special case of our main result, Theorem \ref{starlike EG},  we obtain the following result for $L-$harmonic functions.

\begin{theorem}\label{EG ball}
Let $L$ be an operator of the form (\ref{L}) on $\B^n,\, n\ge  3,$ satisfying the above conditions. 
Let  $E^\prime\subset S^{n-1}=\partial\B^n.$ Suppose  $E^\prime$ is expressed as $\cup_kE^\prime_k,$ where the Euclidean closure of each $E^\prime_k$ has empty fine interior in $S^{n-1}.$ 
Then, for each continuous funciton $g:\B^n\to\R,$ there is an $L-$harmonic function $u$ on $\B^n,$ such that 
$$
	\lim_{\rho \nearrow 1}\big(u-g\big)(\theta', \rho)=0,  \quad \mbox{for all} \quad \theta^\prime\in E^\prime.
$$

\end{theorem}

The proof of Theorem \ref{EG} in \cite{EG} for half-space and $L=\Delta,$ is based on a series of papers dealing only with half-space (and not the ball), so it is not simple to check whether the proof for half-space can be easily modified to give a proof for the ball. 
Our proof (even for classical harmonic functions) invokes a result of Fuglede in abstract potential theory on harmonic spaces.

To compare Theorem \ref{EG ball} with  Theorem \ref{Bagemihl-Seidel}, we have the following lemma.

\begin{lemma}\label{fine}
A subset of the unit circle $S^1$ is $S^1-$finely open if and only if it is open in the standard topology for $S^1.$ 
\end{lemma} 

\begin{proof}
It follows from the definition of the fine topology that every open set is finely open.  

Now, fix  a finely open set $U^\prime\subset S^1$ and put 
$U=U^\prime\times (0,+\infty) \subset\R^2\setminus \{0\}.$
 It is sufficient to show that every point $\theta$ in $U^\prime$ is in the $S^1-$interior of $U^\prime.$  Fix such a point $\theta,$ choose $\rho\in(0,+\infty)$  and put $z=(\theta,\rho)\in \R^2\setminus\{0\}.$ 
Since $U^\prime$ is an $S^1-$fine neighbourhood of $\theta,$ $S^1\setminus U^\prime$ is $S^1-$thin at $\theta.$ It follows from the result of Fuglede that,  $\big(\R^2\setminus\{0\}\big)\setminus U$ is thin at $z,$ and since thinness is local, $\R^2\setminus U$ is thin at $z.$ Since we are in $\R^2,$ there is a circle $C,$ centered at $z$ and of radius less than $|z|=\rho,$ with $C$ contained in the complement of $\R^2\setminus U,$ which is $U.$  Thus, the arc $\alpha$ of  $S^1,$ subtended by $C$ is contained in $U^\prime$ and is an $S^1-$neighbourhood of $\theta$ in $U^\prime.$  Hence, $\theta$ is an $S^1-$interior point of $U^\prime.$   

\end{proof}

From Lemma \ref{fine} it follows that, if we formulate the analogue of Theorem \ref{EG ball}, obtained by putting $n=2$ and $L=\Delta,$ we obtain the Bagemihl-Seidel Theorem \ref{Bagemihl-Seidel}. Hence Theorem \ref{EG ball} is indeed an extension of the Bagemihl-Seidel Theorem to higher dimensions.

As already mentioned, Theorem \ref{EG ball} is a special case of Theorem \ref{starlike EG}.
Recently, we found an analogue of Theorem \ref{Bagemihl-Seidel} for holomorphic functions of several complex variables \cite{GSFields} (see also \cite{KrMi}). 
The proofs in this paper, however, are independent of these results.
Some of the tools which we shall use were proved in the general context of harmonic spaces in axiomatic potential theory.


\section{Potential Theory}

Brelot \cite{Br1960} developped an axiomatic potential theory on certain topological spaces $\Omega.$  Suppose for a given  Hausdorff space  $\Omega,$ we associate to every open subset $U\subset\Omega$ a vector space $\mathcal H(U)$ of continuous functions on $U,$ which we call harmonic functions, satisfying the following Axioms $1, 2, 3$ of Brelot.   

{\it Axiom 1 (sheaf axiom).}  If $U$ is an open subset of $\Omega$ and  $u:U\to\R$ is harmonic on $U,$ then the restriction of $u$ to every open subset $V$ of $U$ is harmonic on $V$ and  if, for every point $x\in U,$ the restriction of $u$ to some open neighbourhood of $x$ is harmonic, then $u$ is harrmonic on $U.$ 

{\it Axiom 2.} $\Omega$ has a basis of open sets which are regular for the Dirichlet problem. 

{\it Axiom 3.} If $u_j$ is an increasing sequence of harmonic functions on a domain $U\subset\Omega$ and $u_j\to u,$ then either $u$ is harmonic on $U$ or $u\equiv+\infty.$ 

The pair $(\Omega, \mathcal H)$, where $\mathcal H$ denotes the  
sheaf $\mathcal H = \{\mathcal H(U): U\subset\Omega \, \mbox{ is open}\},$ is said to be a {\it Brelot (harmonic) space}.

On a harmonic space there is a natural notion of {\it superharmonic function} and 
by a {\it potential} on a harmonic space, we mean a superharmonic function $u \ge 0$ on $ \Omega$ for which every harmonic minorant $\ge 0$ vanishes identically \cite{Br1958}.

A {\it strong Brelot space} is a Brelot space in which there is a potential $>0.$ Note that $\R^2$ is {\it not} a strong Brelot space.  Indeed, if $u$ is a lower bounded superharmonic funciton on $\R^2,$ then $u$ is constant. In particular, every positive constant function $u>0,$ is a superharmonic function $u\ge 0,$ which is a harmonic minorant of itself not vanishing identically.  Thus, there are no potentials on $\R^2.$

With the following additional domination axiom, it is possible to adapt the greater part of classical potential theory to harmonic spaces. The original statement of the domination axiom would require many additional definitions, so we state an equivalent version for a strong Brelot space (see \cite[p. 228]{CC}.

{\it Axiom D (domination axiom).} For every locally bounded potential $p$ on $\Omega$ and every relatively compact open set $U$ in $\Omega,$ the (generalized) solution $H^U_p$ of the Dirichlet problem on $U$ with boundary values $p$ is the greatest harmonic minorant of $p|_U.$

{\it Axiom of polarity.}  A set $E\subset\Omega$ is polar provided that $E$ is thin at every point of $\Omega.$

Axion D implies the axiom of polarity (see \cite[Corollary 9.2.3]{CC}).

For the basic definitions and facts concerning potential theory, superharmonic functions, potentials,  thin sets, polar sets, the fine topology and harmonic approximation on harmonic spaces, see \cite{Br1960}, \cite{Hervethesis} and \cite{CC} and for the particular case of potential theory on Riemannian manifolds, we rely  on \cite{BB} and the references therein. 
One of the pleasant properties of superharmonic functions in harmonic spaces is the following {\it minimum principle} (see \cite[p. 428]{Hervethesis}).

\begin{lemma}\label{min} 
A superharmonic function $\ge 0$ on a domain is either $>0$ or 
$\equiv 0.$ 
\end{lemma}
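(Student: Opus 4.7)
The plan is to exploit the connectedness of $\Omega$ together with the super-mean-value property that defines superharmonic functions in a Brelot space. Put $B = \{x \in \Omega : u(x) = 0\}$. Since $u$ is lower semicontinuous, $B$ is closed in $\Omega$, and because $\Omega$ is a domain (hence connected), it will be enough to prove that $B$ is also open: then $B$ is either empty, in which case $u > 0$ throughout, or all of $\Omega$, in which case $u \equiv 0$.

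To see that $B$ is open, fix $x_0 \in B$ and, invoking Axiom 2, choose a connected regular relatively compact open neighbourhood $V$ of $x_0$ with $\overline V \subset \Omega$. The super-mean-value inequality yields
$$0 \;=\; u(x_0) \;\geq\; \int_{\partial V} u\, d\mu_{x_0}^{V} \;\geq\; 0,$$
where $\mu_{x_0}^V$ is the harmonic measure on $\partial V$ at $x_0$; hence $u|_{\partial V} = 0$ holds $\mu_{x_0}^V$-almost everywhere. The Dirichlet solution $h = H_u^V$ is then harmonic on $V$, with $0 \le h \le u$ and $h(x_0) = 0$, so the minimum principle for \emph{harmonic} functions (a direct consequence of Axioms 1--3, and easier than the present statement because $h$ is continuous) forces $h \equiv 0$ on $V$.

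It remains to promote this vanishing of the harmonic minorant to a vanishing of $u$ itself on $V$. Here I would appeal to the Riesz decomposition $u|_V = h + p$, valid on the (strong) Brelot space $V$, with $p$ a potential. From $h(x_0) = u(x_0) = 0$ we get $p(x_0) = 0$, and expressing $p$ as a Green potential $p(x) = \int G^V(x,y)\, d\mu(y)$ of a Radon measure $\mu \ge 0$ on $V$, the strict positivity of $G^V(x_0,\cdot)$ on $V$ forces $\mu = 0$. Hence $p \equiv 0$, so $u \equiv 0$ on $V$, establishing that $B$ is open.

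The principal obstacle is precisely this last step: the super-mean-value inequality at $x_0$ only controls $u$ on $\partial V$ in the measure $\mu_{x_0}^V$, and such boundary information does not a priori propagate to the interior of $V$. Bridging the gap cleanly requires the Riesz decomposition and positivity of the Green function on $V$, both of which rest on the strong Brelot space structure implicit in the earlier set-up. A route via the fine topology or balayage would need comparable foundational machinery, so I expect the author's proof to hinge on one of these standard potential-theoretic inputs.
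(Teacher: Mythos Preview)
The paper does not supply a proof of this lemma; it simply cites Herv\'e \cite[p.~428]{Hervethesis}. So there is no in-house argument to compare against, only the classical one.

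Your proof is sound through step 4, and indeed the harmonic minimum principle you invoke there is a direct consequence of Axiom~3 (via the Harnack inequality). The difficulty you correctly flag is the last step, and your proposed remedy---Riesz decomposition plus strict positivity of the Green function on $V$---is both heavier than needed and in danger of circularity: in the standard development of Brelot spaces (Herv\'e, Brelot, Constantinescu--Cornea) these tools are established \emph{after} the minimum principle, and the integral representation of potentials in particular leans on it.

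The standard route, and presumably the one on Herv\'e's p.~428, avoids all of that. The same Harnack-type input that gives you the harmonic minimum principle in step 4 also gives that $\mu_{x_0}^V$ has \emph{full support} $\partial V$ when $V$ is a connected regular set. Combine this with the lower semicontinuity of $u\ge 0$: the set $\{u>0\}$ is open, so if it met $\partial V$ it would have positive $\mu_{x_0}^V$-measure, contradicting $\int_{\partial V} u\, d\mu_{x_0}^V=0$. Hence $u\equiv 0$ on $\partial V$. Now let $V$ range over a neighbourhood basis of regular sets at $x_0$ (Axiom~2); the boundaries $\partial V$ sweep out a punctured neighbourhood of $x_0$, so $u=0$ on a neighbourhood of $x_0$ and $B$ is open. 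No Riesz, no Green function, no strong Brelot hypothesis on $V$.
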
 

Notice that the constant function $0,$ is harmonic since $\mathcal H(\Omega)$ is a vector space. We can thus redefine a potential as a superharmonic function $u\ge 0$ on $\Omega,$ whose greatest harmonic minorant is $0.$ Since $0$ is harmonic, it is superharmonic and consequently  $0$ is a potential, since the greatest harmonic minorant of $0$ is itself. On Brelot spaces $(\Omega,\mathcal H)$ for which $0$ is not the only potential, the analogue of classical potential theory can be developped much further. Suppose $u$ is a potential other than $0$ on $\Omega$ and suppose $u(x)=0$ at some point $x\in\Omega.$   Then, by Lemma \ref{min}, $u=0,$ which contradicts the choice of $u.$  Thus, if there is a potential $u$ which is not the constant $0,$ then $u>0.$  
We have already named such a Brelot space a strong Brelot space.  In other words, a strong Brelot space is a Brelot space on which the constant $0$ is not the only potential.

A continuous map $\varphi : X\to Y$ between two Brelot spaces $X$ and $Y$ is called a {\it harmonic morphism} (see \cite{Fug}) if, for every open subset $V$ of $Y$ and every harmonic function $u$ on $V,$ the composition $u\circ\varphi$ is harmonic on $\varphi^{-1}(V)$ (when non-empty). A harmonic {\it isomorphism} between two Brelot  spaces is a homeomorphism which, along with its inverse, are harmonic morphisms.
The following lemma says that also the pullback of a superharmonic (respectively) subharmonic function by a harmonic morphism is superharmonic (respectively subharmonic). See \cite[Corollary 3.2]{CC1965}.

\begin{lemma}\label{superharmonic pullback}
If $\varphi:X\to Y$ is a nonconstant harmonic morphism, with $X$ connected, then  for every open subset $V$ of $Y$ and every superharmonic  (respectively subharmonic) function $u$ on $V,$ the composition $u\circ\varphi$ is superharmonic (respectively subharmonic) on $\varphi^{-1}(V)$ (when non-empty).
\end{lemma}

The following is proved in Fuglede's paper \cite{Fug}. 

\begin{theorem}\label{Fuglede Brelot}
Let $\varphi:X\to Y$ denote a surjective nonconstant harmonic morphism between connected Brelot spaces $X$ and $Y.$ For the ``if" parts of assertions (a), (b) and (c) below, suppose that the points of $Y$ are polar. For the ``only if" part of (b) and the ``if" part of (c), suppose in addition that $Y$ satisfies the axiom of polarity. 
\begin{enumerate}
\item[(a)] A set $V\subset Y$ is finely open if and only if the preimage $\varphi^{-1}(V)$ is finely open in $X.$ In other words, $\varphi:X\to Y$ is fine-to-fine open (being surjective)  and fine to-fine continuous.

\item[(b)] A set $E\subset Y$ is thin at a point $y\in Y$ if and only if $\varphi^{-1}(E)$ is thn at some (and hence every) point of the fiber $\varphi^{-1}(y)\subset X.$

\item[(c)] A set $E\subset Y$ is polar if and only if $\varphi^{-1}(E)$ is polar in $X.$
\end{enumerate}
\end{theorem}


\section{Harmonic approximation on Riemannian manifolds}\label{HARM}

We shall restrict the axiomatic potential theory to the case that $\Omega$ is a Riemannian manifold, by which we mean  a real, second countable, $C^\infty$-manifold with a Riemannian metric.  
We say that a subset of $M$ is {\it bounded} if it's closure is compact. 
For a subset $X\subset M,$ we say that a subset $H\subset M$ is a {\it hole} of $X$ if $H$ is a bounded component of $M\setminus X$ and we
define the topological hull $\widehat X$ of $X$ as the union of $X$ and the holes of $X.$ We shall say that $X$ is topologically convex if $X=\widehat X.$ 
By a regular exhaustion of $M$ by compact sets, we mean a sequence $(K_k)_k^\infty$ of topologically convex compact sets in $M,$ whose interiors cover $M$ and such that 
$$
	K_k \subset K_{k+1}^\circ, \quad k=1, 2, \ldots .
$$
If $M$ is not compact, then it has a regular exhaustion by compact sets.

By a harmonic function on an open subset $U$ of a Riemannian manfold $M,$ we mean a smooth solution $u$ on $U$ of the equation $\Delta u=0,$ where $\Delta$ is the Beltrami Laplacian on $M.$ Denote by $\mathcal H$ the sheaf of harmonic functions on $M.$ 
Then, $(M,\mathcal H)$ is an important instance of a Brelot space (see \cite[p. 112]{Fug}).

We recall the following simple lemma.

\begin{lemma}\label{liminf} 
If $u$ is superharmonic in an open set $U\subset M$, then for each $y$ in $U$,
$$
 \liminf_{x\to y} u(x) = u(y), \quad x \in U.
 $$
 \end{lemma}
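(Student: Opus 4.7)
The plan is to prove the two inequalities separately. The easy direction $\liminf_{x\to y}u(x)\ge u(y)$ is just lower semicontinuity, which is part of the definition of a superharmonic function on any Brelot harmonic space (and hence on the Riemannian manifold $M$, viewed as such a space). The substance of the lemma lies in the reverse inequality, which I would establish by contradiction, comparing $u$ with a harmonic majorant on a small regular neighbourhood of $y$.

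Suppose, toward a contradiction, that $\liminf_{x\to y}u(x)>u(y)$. In particular $u(y)<+\infty$, and there exist $\alpha\in\R$ and an open neighbourhood $W\subset U$ of $y$ such that $u(y)<\alpha$ and $u(x)\ge\alpha$ for every $x\in W\setminus\{y\}$. By Axiom 2, choose a relatively compact open set $V$ with $y\in V$, $\overline V\subset W$, and regular for the Dirichlet problem. Since $y$ is an interior point of $V$, we have $\partial V\subset W\setminus\{y\}$, and therefore $u\ge\alpha$ on $\partial V$.

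Next, the superharmonic function $u-\alpha$, defined on a neighbourhood of $\overline V$, satisfies $\liminf_{x\to z}(u-\alpha)(x)\ge 0$ at every $z\in\partial V$, by lower semicontinuity combined with $u(z)\ge\alpha$. Comparing with the generalized Dirichlet solution $H^V_{u-\alpha}$ (equivalently, applying the boundary minimum principle for superharmonic functions on the regular domain $V$) yields $u-\alpha\ge 0$ on $V$. Specializing to $x=y$ gives $u(y)\ge\alpha$, contradicting the choice of $\alpha$.

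The only delicate point is this final comparison step, namely the boundary minimum principle on a regular domain in a Brelot space. It is a standard consequence of Axioms 1--3 (and is indeed one of the main motivations for Axiom 2), and is covered by the potential-theoretic references \cite{Br1960,Hervethesis,CC} already cited in the paper; on a Riemannian manifold one may alternatively verify it by spherical averaging on small geodesic balls centred at $y$, which again forces $u(y)\ge\alpha$.
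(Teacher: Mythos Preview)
Your argument is correct. The paper's own proof is a single sentence invoking lower semicontinuity together with the super-mean-value inequality for superharmonic functions on a Riemannian manifold (citing \cite[p.~69]{BB}); this is exactly the ``spherical averaging on small geodesic balls'' alternative you mention at the end, so your proposal already contains the paper's approach. Your primary route via the boundary minimum principle on a regular neighbourhood is an equally standard and essentially equivalent variant---note only that the step ``$u-\alpha$ is superharmonic'' uses that constants are harmonic, which holds for the Laplace--Beltrami operator on $M$ but not in an arbitrary Brelot space, so the Riemannian setting is genuinely being used there.
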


It follows, that if $E$ is a subset of a Riemannian manifold $M$ and $u\in\overline E,$ then
$$
\liminf_{z\to y, z\in E} u(z)\ge u(y).	
$$
A subset $E$ is said to be {\it thin at a point} $y\in M$ if $y$ is not a limit point of $E$ or, when $y$ is a limit point of $E$, if there is a superharmonic function $u$ on an open neighbourhood of $y$, such that
\begin{equation}\label{thin}
\liminf_{z\to y, z\in E} u(z)> u(y).
\end{equation}
$E$ is said to be {\it nowhere thin} in a subset $G$ of $M,$ if $E$ is thin at no point of $G.$

The above lemma  follows from the semi-continuity and mean-value properties for superharmonic functions (\cite[p. 69]{BB}) and implies that every  open set in a Riemannian manifold of dimension $n>1$ is nowhere thin in itself. 
A set $E\subset M$ is said to be a {\it polar set} if there is a superharmonic
function $u$ on some open set 
$U$ such that $E \subset \{x \in U: u(x)=+\infty\}$.
An easy argument shows that if $E$ is polar and closed in $M,$ then the open set  $M\setminus E$ is nowhere thin in $M.$ A polar set in $\R^n$ is thin everywhere (\cite[Theorem 7.2.2.]{AG}).

As mentioned in the introduction, the fine topology can be defined on a Riemannian manifold using the Laplace-Beltrami operator $\Delta$ in place of the operator $L$ given by (\ref{L}) and we often simply speak of finely open, finely thin etc. instead of $\Delta-$finely open, $\Delta-$finely thin etc. The  fine topology on a Riemannian manifold $M$ is the smallest topology, for which all superharmonic functions are continuous as functions from $M$ to $(-\infty,+\infty].$ It follows from Lemma \ref{liminf} that the fine topology is finer than the initial topology on $M.$

From Lemma \ref{superharmonic pullback}, we have the following.

\begin{lemma}
Let $\varphi:M\to N$ be a non-constant harmonic morphism between Riemannian manifolds $M$ and $N.$ If a subset $E\subset N$ is thin at a point $y\in Y,$ then $\varphi^{-1}(E)$ is thin at each point $x\in \varphi^{-1}(y).$ 
\end{lemma}

\begin{proof}
Suppose $E$ is thin at a point $y\in Y$ and $x\in \pi^{-1}(y).$  If $y\not\in\overline E,$ it follows from the continuity of $\varphi$ that $x\not\in\overline{\pi^{-1}(E)},$
so the proof is done in this case. Now, if $y\in \overline E,$ there exists a function $u$ superharmonic in a neighbourhood $V$ of $y$ satisfying (\ref{thin}). Again, from the continuity, 
$$
\liminf_{w\to x,\, w\in \pi^{-1}(E)} (u\circ\varphi)(w)> (u\circ\varphi)(x).
$$
From Lemma \ref{superharmonic pullback}, $u\circ\varphi$ is superharmonic in the neighbourhood $\pi^{-1}(V)$ of $x,$ so $\pi^{-1}(E)$ is thin at $x.$

\end{proof}

In 1966, Cornea (see \cite[Corollary 5.1.1]{CC}) proved the following. 

\begin{lemma}\label{Baire}
$M$ endowed with the fine topology is a Baire space.
\end{lemma}

\begin{lemma}\label{A union B}
If $A$ and $B$ are closed sets with empty fine interior, then the same is true of $A\cup B.$ 
\end{lemma}

\begin{proof}
Let $V$ be a finely open subset of $A\cup B.$ We must show that $V$ is empty.  Since $V\cap (A\setminus B)=V\cap (M\setminus B),$ and $M\setminus B$ is finely open, being open, it follows that $V\cap(A\setminus B)$ is a finely open subset of $A$ and therefore empty.  Thus, the finely open subset $V$ of $A\cup B$ is contained in $B.$ Therefore it is contained in the largest finely open subset of $B,$ which is the fine interior of $B,$ which is empty.  That is, $V\cap B=\emptyset.$ We have 
$$
	V=V\cap(A\cup B)= \big(V\cap (A\setminus  B)\big) \cup (V\cap B) = \emptyset. 
$$
\end{proof}

We recall the following interesting fact.

\begin{lemma} \cite[Lemma A.1.6]{BW} \label{polar dimension}
If $M$ is a Riemannian manifold of dimension $m$ and $N$ is a submanifold of dimension $n$, then $N$ is polar in $M$ if and only $n\le m-2.$ 
\end{lemma}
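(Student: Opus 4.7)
The plan is to prove the two implications separately, using potential-theoretic tools on Riemannian manifolds.

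For the \emph{sufficiency} ($n\le m-2 \Rightarrow N$ is polar), I would argue locally, exploiting the fact that countable unions of polar sets are polar in a second-countable Brelot space. Write $N=\bigcup_j K_j$ as a countable union of compact pieces, each contained in a coordinate chart in which $N$ is cut out by the vanishing of $m-n$ of the coordinates. On such a chart, form the local Green potential
\begin{equation*}
u(x)\;=\;\int_{K_j} G(x,y)\,d\mathcal{H}^n(y),
\end{equation*}
where $G$ is a local Green function for the Laplace--Beltrami operator, which satisfies $G(x,y)\asymp d(x,y)^{2-m}$ as $d(x,y)\to 0$, and $\mathcal{H}^n$ is $n$-dimensional Hausdorff measure on $N$. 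Being the Green potential of a positive compactly supported measure, $u$ is superharmonic on a neighbourhood of $K_j$. The key computation is: for $x\in K_j$, in $n$-dimensional geodesic polar coordinates on $N$ centred at $x$, the singular part of $u(x)$ is of order $\int_0^\epsilon r^{2-m}\,r^{n-1}\,dr=\int_0^\epsilon r^{n-m+1}\,dr$, which diverges precisely when $n\le m-2$. Hence $u\equiv+\infty$ on $K_j$, making $K_j$ polar, and consequently $N$ is polar.

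For the \emph{necessity} ($n\ge m-1 \Rightarrow N$ is not polar), suppose first that $n=m$. Then $N$ is open in $M$. By Lemma \ref{liminf} and the remark following it, every open set is nowhere thin in itself; since polar sets are thin everywhere (by the general theory of polar sets on Brelot spaces), $N$ cannot be polar. If $n=m-1$, pick a compact piece $K$ of $N$ lying in a chart where $N$ is locally the model hyperplane $\R^{m-1}\times\{0\}$. A Fubini estimate shows that the $(m-2)$-energy of the surface measure $\sigma$ on $K$ is finite:
\begin{equation*}
\int_K\int_K d(x,y)^{2-m}\,d\sigma(x)\,d\sigma(y)\;\lesssim\;\int_K\Big(\int_0^\epsilon r^{2-m}\,r^{m-2}\,dr\Big)\,d\sigma(x)<+\infty.
\end{equation*}
Therefore $K$ carries a positive measure of finite energy and hence has positive capacity, ruling out that $K$, or indeed $N$, be polar.

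The main obstacle is the rigorous justification that the potential $u$ is superharmonic on a Riemannian manifold, since the Green function $G$ is only a local object and its comparability with $d^{2-m}$ relies on the parametrix theory for the Laplace--Beltrami operator. Once the construction of a local Green function on Riemannian manifolds (as in \cite{BB}) is invoked, both directions reduce to a direct integration in polar coordinates on $N$, as carried out above.
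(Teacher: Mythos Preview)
The paper does not prove this lemma at all; it is simply quoted as \cite[Lemma A.1.6]{BW} and used as a black box. So there is no argument in the paper to compare against.

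Your proof is the standard capacity/energy argument and is essentially correct. One small gap: the asymptotic $G(x,y)\asymp d(x,y)^{2-m}$ is only valid for $m\ge 3$; when $m=2$ the Green function has a logarithmic singularity and your polar-coordinate integral $\int_0^\epsilon r^{n-m+1}\,dr$ does not literally apply. However, in that case $n\le m-2=0$ forces $N$ to be a discrete set of points, and points are polar on Riemannian surfaces by the logarithmic singularity of the local Green function (or directly, since $\int_0^\epsilon \log(1/r)\,d\delta_p$ diverges). With this adjustment your argument covers all dimensions. The other technical point you flag---that the local Green potential is genuinely superharmonic---is handled by the parametrix construction in \cite{BB}, exactly as you indicate.
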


In particular, a point in a Riemannian manifold of dimension $m$ is polar if and only if $m\ge 2$ and, since every subset of a polar set is polar, it follows that  there are no non-empty polar sets in $\R^1$ nor in $S^1.$

\begin{lemma}(see \cite[p. 112]{Fug})
For every Riemannian manifold $N,$ $(N,\mathcal H)$ is a Brelot space satisfying the axiom of polarity  and if the dimension of $N$ is greater than 1, points are polar.
\end{lemma}

\begin{proof}

In \cite[p. 112]{Fug}, Fuglede points out that  a Riemannian manifold is a Brelot space and satisfies an axiom, which he calls axiom $\overline D,$ and which is even stronger than the axiom of domination.

 We mentioned earlier that the axiom of domination implies the axion of polarity. If the dimension of $M$ is greater than 2, it follows from Lemma \ref{polar dimension} that points are polar.

\end{proof}

The following gives a simple example of a harmonic morphism (see \cite[Ch. 4]{BW}).

\begin{lemma}\label{product}
The {\it natural projection} $M\times N\to N$ from a Riemannian product is a harmonic morphism.
\end{lemma}

We can thus rephrase Fuglede's Theorem \ref{Fuglede Brelot} for Riemannian manifolds.

\begin{theorem}\label{Fuglede manifolds}
Let $\varphi:M\to N$ denote a surjective non-constant harmonic morphism between Riemannian manifolds $M$ and $N,$ where $N$ has dimension $>1.$ 
\begin{itemize}
\item [(a)]  A set $V\subset N$ is finely open if and only if the preimage $\varphi^{-1}(V)$ is finely open in $M.$ In other words, $\varphi:M\to N$ is fine-to-fine open (being surjective) and fine-to-fine continuous.

\item [(b)] A set $E\subset N$ is thin at a point $y\in N$ if and only if $\varphi^{-1}(E)$ is thin at some (and hence every) point of the fiber $\varphi^{-1}(y)\subset M.$

\item [(c)] A set $E\subset N$ is polar if and only if $\varphi^{-1}(E)$ is polar in $M.$
\end{itemize}
\end{theorem}

\begin{corollary}\label{fine interior}
Under the same hypotheses, a set $E\subset N$ has empty fine interior if and only if $\pi^{-1}(E)$ has empty fine interior. 
\end{corollary}

\begin{proof}
$E\subset N$ has non-empty fine interior if and only if $N\setminus E$ is thin at some poiint $y\in E$ if and only if $\pi^{-1}(N\setminus E)=M\setminus\pi^{-1}(E)$ is thin at some point $x\in \pi^{-1}(y)\subset
\pi^{-1}(E)$ if and only if $\pi^{-1}(E)$ has non-empty interior. 

\end{proof}

Euclidian space $\R^n$  is a Riemannian manifold with the usual distance $d_n$ as Riemannian metric.  We consider the sphere $S^{n-1}$ as a Riemannian manifold with the restriction of $d_n$ as Riemannian metric.  This induces a harmonic structure on $S^{n-1}$ and we shall sometimes add the prefix ``spherically" to the corresponding potential theoretic notions on $S^{n-1},$ for example spherically thin or spherically polar sets.

\begin{lemma}\label{fine sphere}
For $n>2,$ in the fine topology of $\R^n,$ the fine interior of $S^{n-1}$ is empty, but the spherically fine interior of $S^{n-1}$ is all of $S^{n-1}.$
\end{lemma}

\begin{proof}
The second part is trivial, because $S^{n-1}$ is spherically open, hence spherically finely open. 
For the first part, because of the symmetry of the sphere,  $\R^{n-1}\setminus S^{n-1}$ is thin either at every point or no point of $S^{n-1}.$ Suppose, to obtain a contradiction, that $\R^{n-1}\setminus S^{n-1}$ is thin at every point of $S^{n-1}.$ Then, every point of $S^{n-1}$ is in the fine interior of $S^{n-1},$ so $S^{n-1}$ is a finely open subset of $\R^{n}.$ Since $S^{n-1}$ is closed in $\R^n,$ it is finely closed in $\R^n.$ Thus, $S^{n-1}$ is a finely  {\it clopen} (closed and open) subset $\R^n.$ But $\R^n$ is connected in the fine topology, so the only finely clopen subsets are $\emptyset$ and $\R^n.$ This is the desired contradiction and so $\R^n\setminus S^{n-1}$ is thin at no point of $S^{n-1}.$ That is, no point of $S^{n-1}$ is in the fine interior of $S^{n-1}.$

\end{proof}

If $\Omega$ is a locally compact Hausdorff space, we denote by $*_\Omega,$ or simply $*$ if the context is clear, the ideal point of the one-point compactification $\Omega^*=\Omega\cup \{*\}$ of $\Omega.$ We say that a subset $X\subset \Omega^*$ is {\it bounded} if the ideal point $*_\Omega$ is not in the closure of $X.$ If $X\subset \Omega,$ this is consistent with our previous definition of boundedness.
For $F$  a closed subset of a Riemannian manifold $M.$ We define (as earlier for harmonic spaces) a {\it hole} of $F$ to be  any bounded component of $M\setminus F.$  
We denote by $\widehat F$ the union of $F$ and all of its holes. Thus, $\widehat F$ is a closed set having no holes.

We denote by $A(F)$ the family of continuous functions on $F$ which are harmonic on the interiour $F^\circ.$ 
A closed set $F\subset  M$ is said to be a 
{\it Mergelyan} (respectively {\it Carleman}) set in $M$ if for every $u\in A(F)$,  and every $\epsilon,$ which is a  positive constant (respectively, positive continuous function on $F$)  there exists a function $h$, harmonic on $M$, such that $|h-u|<\epsilon$ on $F.$
The terminology is in honour of Mergelyan, who showed (see \cite{Gai}) that, if $K$ is a compact subset of $\C$ with connected complement 
(i.e. $\widehat K=K$), then, for each function $f$ continuous on $K$ and holomorphic on $K^\circ$ and each positive constant $\epsilon,$ there is a polynomial $p$ such that $$|f(z)-p(z)|<\epsilon, \quad \textit{for all} \quad z\in K$$ and Carleman, who showed that, for each continuous function $f(x), \, x\in \R$ and for each positive continuous function $\epsilon(x), \, x\in\R,$ there is an entire function $g(z), z\in\C,$ such that 
$$|f(x)-g(x)|<\epsilon(x), \,\quad \textit{ for all} \quad x\in\R.$$
 Note that by the Tietze Extension Theorem, the function $\epsilon,$ in the definition of a Carleman set, may be considered to be defined on all of $M,$ not just on $F.$

The following theorem from \cite{BG} gives a characterization of Mergelyan sets and 
also of Carleman sets having no interior.

\begin{theorem}\label{BG} 
Let $F$ be a closed subset of a non-compact Riemannian manifold $M$. Then the
following are equivalent.
\begin{itemize}
\item[(a)]  $F$ is a Mergelyan set.

\item[(b)]  All three of the following conditions hold:

(i) $M\setminus \widehat F$ and $M\setminus F^\circ$ are thin at the same points of $F$.

(ii) $M^\star\setminus \widehat F$ is locally connected.

(iii) The holes of $F$ satisfy the long islands condition.
\end{itemize}
Moreover, if $F$ is a Mergelyan set and $F^\circ=\emptyset,$ then $F$ is a Carleman set. 
\end{theorem}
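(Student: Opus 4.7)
The plan is to establish the equivalence $(a)\Leftrightarrow (b)$ in two passes and then handle the Carleman addendum as a refinement of the sufficiency argument. For the necessity direction $(a)\Rightarrow (b)$, I would argue by contradiction for each of the three conditions. For (i), suppose there were a point $y\in F$ at which $M\setminus F^\circ$ is thin while $M\setminus \widehat F$ is not. Using a superharmonic function witnessing the thinness, one can construct a function $u\in A(F)$ whose limiting behaviour as one approaches $y$ from within $F$ differs from the unrestricted limit that any globally harmonic approximant would have to exhibit; uniform approximation would then violate the minimum principle (Lemma \ref{min}). For (ii), a failure of local connectedness of $M^\star\setminus\widehat F$ at $*$ yields a nested sequence of holes accumulating to infinity in a disconnected way, which lets one prescribe wildly different locally constant values on the separating arcs of $F$, obstructing approximation. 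For (iii), violating the long islands condition produces a chain of holes in which poles of approximants cannot be escorted to $*$ without leaving traces, again defeating the uniform estimate.

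The sufficiency direction $(b)\Rightarrow (a)$ is the main technical content. The standard strategy is to exhaust $M$ by a sequence of relatively compact, connected open sets $\Omega_n$ with $\overline{\Omega_n}\subset\Omega_{n+1}$ and $M=\bigcup_n\Omega_n$. Given $u\in A(F)$ and $\epsilon>0$, one produces a telescoping sequence $h_n$ of functions harmonic on $M$ satisfying
\[
|h_n-h_{n-1}|<\epsilon/2^n\quad\text{on }F\cap\overline{\Omega_{n-1}},\qquad |h_n-u|<\epsilon/2\quad\text{on }F\cap\overline{\Omega_n}.
\]
At each stage, the local step is carried out by invoking the compact Mergelyan-type theorem on $F\cap\overline{\Omega_{n+1}}$, whose hypotheses are guaranteed by (i); the resulting function is harmonic only off a compact singular set, and a fusion lemma is used to splice it with $h_{n-1}$. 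The auxiliary singularities introduced by fusion are then pushed out through the holes of $F$ to the ideal point $*$, which is precisely what conditions (ii) and (iii) allow: local connectedness at $*$ supplies the topological room to move poles through chains of holes, and the long islands condition guarantees that every hole belongs to such a chain terminating at $*$. The limit $h=\lim h_n$ is harmonic on $M$ and satisfies $|h-u|<\epsilon$ on $F$.

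For the Carleman addendum, the empty-interior hypothesis $F^\circ=\emptyset$ removes any constraint of harmonicity on $F$ itself, so the telescoping scheme admits a variable tolerance. Replacing the constants $\epsilon/2^n$ by $\epsilon_n=\tfrac12\inf_{F\cap\overline{\Omega_n}}\epsilon(x)/2^n$ and rerunning the construction yields a global harmonic $h$ with $|h-u|<\epsilon(x)$ pointwise on $F$; the absence of interior points means no compatibility between successive $h_n$ need be preserved beyond the uniform control on the telescoping differences.

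The hard part will be the pole-pushing step in the sufficiency direction. The compact-set Mergelyan theorem only places singularities in $M\setminus F$, and converting this into global harmonicity requires a delicate induction that simultaneously tracks which hole each singularity lives in, uses the long islands condition to select a path of holes out to $*$, and exploits local connectedness at $*$ to ensure the displacement can be carried out within arbitrarily small neighbourhoods. Making these three ingredients cooperate, so that the errors accumulated from successive fusions remain summable, is the genuinely technical heart of the argument and is where the careful formulation of (i)--(iii) in \cite{BG} pays off.
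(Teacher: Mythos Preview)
The paper does not prove Theorem~\ref{BG}; it is quoted from \cite{BG} (Bagby--Gauthier) as a tool, introduced with ``The following theorem from \cite{BG}\ldots'' and followed immediately by the next section with no proof environment. So there is no proof in this paper to compare your proposal against.

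Your outline is a reasonable sketch of the strategy in the original Bagby--Gauthier paper (local Mergelyan approximation via condition~(i), fusion, and pole-pushing through holes using (ii) and (iii)), but since the present paper treats the result as a black box, the appropriate ``proof'' here is simply a citation. If your goal is to reconstruct the argument, you should consult \cite{BG} directly rather than this paper.
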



\section{Approximation on harmonic bundles}

Let $(M,g)$ be a Riemannian manifold with Riemannian metric $g.$ We denote by $(M,\mathcal H)$ the associated Brelot harmonic space, where it is understood, by abuse of notation, that $\mathcal H$ is the sheaf $\mathcal H_g$ of harmonic functions on $M.$ 
A continuous map $\phi : M\to N$ between two Riemannian manifolds 
$(M,g)$ and $(N,h)$ is called a {\it harmonic morphism} (see \cite[Def. 4.1.1]{BW}) if it is a harmonic morphism between the associated Brelot spaces $(M,\mathcal H_g)$ and $(N,\mathcal H_h).$ 

A simple example of a harmonic isomorphism is the change of charts on a Riemannian manifold of dimension $> 1.$ 

The following quote is from \cite[Ch. 12]{BW}. ``The general problem of classifying harmonic morphisms between [Riemannian] manifolds of arbitrary dimensions remains far from our reach at the present time; however, the problem becomes tractable when they map from an $n$-dimensional to an $(n-1)$-dimensional manifold, so that the regular fibres are one-dimensional."

Fuglede, in the next theorem, furnishes two important instances, where the fibers are one-dimensional.

\begin{theorem}\cite[Theorems  2 and 3]{Fug}\label{Fug} 
For $n>1,$ let
$\varphi:M\to  N$ 
denote either the ``vertical projection"  $\R^n=\R^{n-1}\times\R\to \R^{n-1}$ or the ``radial projection" $\R^n\setminus\{0\}\to S^{n-1},$ given by $y\to y/|y|.$ Then, $\varphi$ is a harmonic morphism  and a set $E\subset N$ is thin at a point $x\in N$ if and only if $\varphi^{-1}(E)$ is thin at some (and hence every) point of the fiber $\varphi^{-1}(x)\subset M.$ Moreover, a set $E\subset N$ is polar iff $\varphi^{-1}(E)$ is polar.
\end{theorem}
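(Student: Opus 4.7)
The plan is to verify the harmonic morphism property by a direct Laplacian computation, dispatch the easy direction of both thinness and polarity by pullback, and then confront the hard direction of thinness, which is the main obstacle; polarity will follow from the thinness equivalence.

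\emph{Step 1 (harmonic morphism and easy direction).} For the vertical projection, the pullback of $u$ is $u\circ\varphi(x_1,\dots,x_n)=u(x_1,\dots,x_{n-1})$, and the Euclidean Laplacian of this pullback on $\R^n$ equals $\Delta_{\R^{n-1}}u$, so harmonic functions (and, distributionally, superharmonic functions) pull back to harmonic (respectively superharmonic) ones. For the radial projection, write the $\R^n$ Laplacian in polar coordinates as $\Delta=\partial_r^2+(n-1)r^{-1}\partial_r+r^{-2}\Delta_{S^{n-1}}$; since $u\circ\varphi(r,\omega)=u(\omega)$ is independent of $r$, we get $\Delta(u\circ\varphi)=r^{-2}\Delta_{S^{n-1}}u$, vanishing (respectively, $\le 0$) precisely when $u$ is harmonic (respectively, superharmonic) on $S^{n-1}$. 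Thus $\varphi$ is a harmonic morphism in each case and pullback preserves superharmonicity. If now $u$ is a superharmonic witness of thinness of $E$ at $y$ on a neighbourhood $V\ni y$, then $\tilde u:=u\circ\varphi$ is superharmonic on $\varphi^{-1}(V)$ and, for every $z\in\varphi^{-1}(y)$,
\[
\liminf_{w\to z,\,w\in\varphi^{-1}(E)}\tilde u(w)\;\ge\;\liminf_{x\to y,\,x\in E}u(x)\;>\;u(y)=\tilde u(z),
\]
so $\varphi^{-1}(E)$ is thin at every fibre point; the same pullback, applied to $u\equiv+\infty$ on $E$, gives the easy direction of polarity.

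\emph{Step 2 (hard direction of thinness --- the main obstacle).} Suppose $\varphi^{-1}(E)$ is thin at some $z\in\varphi^{-1}(y)$. Direct restriction of a superharmonic witness to the fibre or fibrewise averaging generally fails to preserve superharmonicity, so I would argue through the balayage characterization of thinness: $A$ is thin at $p$ iff the reduced function $\widehat R^A_1(p)<1$, where $\widehat R^A_1$ is the lower-semicontinuous regularization of the pointwise infimum of nonnegative superharmonic majorants of $\chi_A$. Pullback via $\varphi$ turns nonnegative superharmonic majorants of $\chi_E$ near $y$ into admissible majorants of $\chi_{\varphi^{-1}(E)}$ near $z$ (by Step~1), so $\widehat R^{\varphi^{-1}(E)}_1(z)\le\widehat R^E_1(y)$ at once. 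The reverse inequality is the heart of the matter: here I would exploit that $\varphi$ is an open harmonic morphism with one-dimensional fibres and invoke the theorem (also due to Fuglede) that such a morphism is continuous with respect to the fine topologies on $M$ and $N$. Consequently, a fine neighbourhood of $z$ disjoint from $\varphi^{-1}(E)\setminus\{z\}$ projects under $\varphi$ to a fine neighbourhood of $y$ disjoint from $E\setminus\{y\}$, witnessing thinness of $E$ at $y$. Combined with Step~1, this yields the ``some iff every'' assertion.

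\emph{Step 3 (polar equivalence).} The easy direction was obtained in Step~1. Conversely, a polar set in a Riemannian manifold of dimension $\ge 2$ is thin at every point of itself (recorded in the excerpt via \cite{AG} for $\R^n$, the same proof applying to $S^{n-1}$). If $\varphi^{-1}(E)$ is polar, it is therefore thin at every fibre point above every $y\in E$; Step~2 then shows $E$ is thin at every $y\in E$, whence $E$ is polar (using $\dim N\ge 2$, i.e.\ $n\ge 3$; the degenerate case $n=2$ was noted above to admit no nonempty polar sets on $N$). The overall obstacle thus localizes to the reverse reduced-function inequality in Step~2; everything else is formal.
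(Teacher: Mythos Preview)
The paper does not prove this theorem at all: it is quoted from Fuglede \cite{Fug} (with the vertical case attributed to \cite[Theorem 7.8.6]{AG}) and used as a black box. So there is no ``paper's own proof'' to compare against; your proposal is an attempt to supply one.

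Your Step~1 is correct and standard. The difficulty, as you identify, is Step~2, and here your argument has a genuine gap. You invoke that $\varphi$ is \emph{continuous} with respect to the fine topologies, but what you actually use in the next sentence is that $\varphi$ is finely \emph{open}: you take a fine neighbourhood $U$ of $z$ and claim $\varphi(U)$ is a fine neighbourhood of $y$. Fine continuity only tells you that \emph{preimages} of finely open sets are finely open, not that \emph{images} are. Fine openness of a harmonic morphism is precisely the statement that thinness pushes forward along fibres, which is the nontrivial content of Fuglede's Theorems~2 and~3; so as written your Step~2 is circular. The actual proofs (in \cite{AG} for the vertical case, and in \cite{Fug} for both) proceed by quantitative capacity or Wiener-criterion estimates specific to these projections, not by a soft fine-topology argument.

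There is also a smaller gap in Step~3: from ``$\varphi^{-1}(E)$ polar $\Rightarrow$ thin at every fibre point above every $y\in E$'' and Step~2 you only conclude that $E$ is thin at every point of $E$, whereas the axiom of polarity requires $E$ to be thin at every point of $N$. This is easily repaired (apply Step~2 at an arbitrary $y\in N$, using that a polar set is thin everywhere), but it should be said.
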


We remark here that the case $\R^n\to \R^{n-1}$ was first proved by Armitage and Gardiner \cite[Theorem 7.8.6]{AG}.

\bigskip

The above two projections suggest that we introduce 
{\it harmonic line bundles}. By a line bundle, we mean a  fiber bundle $\big(M,\, N, \, \pi, \, I\big),$ where $M$ and $N$ are Riemannian manifolds and the fiber $I$ is an open interval $a<t<b,$ where $\infty\le a<b\le+\infty.$ Thus, for every point $x\in N,$ there is an open neighbourhood $V_x\subset N$ and a homeomorphism $\psi_x : \pi^{-1}(V_x)\to V_x\times I$ called a {\it local trivialization}, such that $\pi_x\circ \psi_x=\pi,$ where $\pi_x$ is the usual projection from $V_x\times I$ onto $V_x.$ If $a\in V_x\times V_y,$ we require that 
$$\psi_y\circ\psi^{-1}_x:\{a\}\times I\to\{a\}\times I$$
 be an order isomorphism.  
Hence the fiber $I_x=\pi^{-1}(\pi(x))$ containing $x$ can be considered as an {\it ordered} open arc properly embedded in $M.$ 
Every (non-empty and open) connected subset $C$ of $I_x$ is an ordered subarc  
$(\alpha_C,\beta_C)\subset I_x,$ with $\alpha_C<\beta_C.$
If $\beta=+\infty,$ for 
$$\psi_x\big((\alpha_C,\beta_C)\big)=\{x\}\times(\alpha,\beta)\subset\{x\}\times I,$$
then  the same is true for every $\psi_y,$ where $C\subset \pi^{-1}(V_y).$  In this case we write $\beta_{C}=+\infty,$ without ambiguity.  The same for $\alpha_{C}=-\infty.$

Since $I_x$ is closed in $M,$ the $I$-closure of a subset $A$ of $I_x$ is the same as the $M$-closure of $A$ and so there is no ambiguity in the notation $\overline A.$  
In particular, $A$ is bounded in $I_x$ if and only if it is bounded in $M,$ so there is no ambiguity in the notion of a bounded subset of $I_x.$ It follows from the preceding that a bounded connected subset $H$ of $I_x$ is an arc $(\alpha_x,\beta_x),$ with $\alpha_x,\beta_x\in I_x.$ That is, $\alpha_x\not=-\infty$ and $\beta_x\not=+\infty.$ 

\medskip

Let $\big(M,\, N, \, \pi, \, I\big)$ be a line bundle.
By Lemma \ref{product} the projections $\pi_x$ are harmonic morphisms.  By a {\it harmonic line bundle}, we mean such a line bundle, where the local homeomorphisms $\psi_x$ are harmonic isomorphisms. If we have a harmonic line bundle, it follows that the projection $\pi:M\to N$ is a harmonic morphism, being locally a combination of harmonic morphisms. 
We warn the reader that the expression ``harmonic line bundle" may have different meanings in the literature.  
For the remainder of this section $\big(M,\, N, \, \pi, \, I\big)$ is a harmonic line bundle, which we shall by abuse of notation often denote simply by $M.$

\bigskip

For $X\subset M$ and $x\in X,$ denote by $X_x$ the {\it slice}
 $X_x=X\cap I_x.$ 
We define the {\it fiber hull} of $X,$ denoted by $X^I,$ as the union of $X$ and all of the holes of 
all of the slices $X_x$ over all $x\in X.$ More precisely, 
$$
	X^I = X\cup\bigcup_{x\in X} \left\{H: H\, \mbox{is a bounded component of } \, I_x\setminus X_x=I_x\setminus X\right\}.
$$

It is easy to see that, if $X$ is closed then $\widehat X\subseteq X^I.$ That is, the topological hull is contained in the fiber-hull.

\begin{lemma}\label{monotone}
If $X\subset Y\subset M,$ then $X^I\subset Y^I.$ 
\end{lemma}

\begin{proof}
Fix $x\in X^I.$ If $x\in Y,$ then obviously $x\in Y^I.$ If $x\not\in Y,$ let $H_y$ be the component of $I_x\setminus Y$ containing $x.$ Then $H_y$ is a connected subset of $I_x\setminus X$ containing $x$ and so $H_y$ is contained in the largest connected subset $H_x$ of $I_x\setminus X$ containing $x.$ That is, $H_y\subset H_x.$ Since $H_x$ is bounded, it follows that $H_y$ is bounded.  Thus, $x \in H_y\subset Y^I.$ 

\end{proof}

Denote by $\alpha_{x}$ and $\beta_{x}$ respectively the infimum and supremum of points in $I_x\cap X,$ using the induced order. 
It follows that, if $X$ is compact, then for each $x,$ $I_x\cap X^I$ is the (possibly degenerate) arc $[\alpha_x,\beta_x]$ in $I_x.$ The fiber hull $X^I$ contains all (possibly degenerate) bounded arcs in $I_x$ whose end points lie in $X$ and it is not hard to see that, if $X$ is closed, then $X^I$ is precisely the union of all  (possibly degenerate) compact arcs in $I_x$ whose end points lie in $X.$

\begin{lemma}\label{compactopenhull}
\begin{equation}\label{compacthull}
	X^I=\bigcup_{x\in X}
	[\alpha_x,\beta_x],
	 \quad \mbox{for $X$ compact},
\end{equation}
and
\begin{equation}\label{openhull}
	X^I=\bigcup_{x\in X}
	(\alpha_x,\beta_x),
	 \quad \mbox{for $X$ open.}
\end{equation}
\end{lemma}

\begin{proof}The proof of  (\ref{compacthull}) is straigtforward. To prove (\ref{openhull})
let $y\in \cup_{x\in X} (\alpha_x, \beta_x).$ Then, for some $x\in X$ we have $y\in (\alpha_x, \beta_x).$ If $y\in X,$ then clearly $y\in X^I.$ If $y\not\in X,$ choose $\alpha,\beta\in X,$ with $\alpha_x<\alpha<y<\beta<\beta_x.$ The component of $I_x\setminus X$ containing $y$ is bounded, since it is contained in $(\alpha,\beta).$ 
Thus, $y\in X^I$ by definition.

Now let $y\in X^I.$ Then $y\in X$ or it belongs to a bounded component $[\alpha,\beta]$ of $I_z\setminus X$ for some $z\in X.$ In the first case, 
since $y$ is in the interior of $X,$ there exist $\alpha,\beta\in X\cap I_y,$ with $\alpha_y<\alpha<y<\beta<\beta_y$ and $[\alpha,\beta]\subset X$ so
$y\in (\alpha_y, \beta_y).$ Thus $y\in \cup_{x\in X} (\alpha_x, \beta_x).$ In the second case,  there exist $\gamma, \delta\in X\cap I_z, \, \alpha_z<\gamma<\alpha\le\beta<\delta<\beta_z.$ Clearly $y\in (\alpha_z, \beta_z),$ 
so it is in $\cup_{x\in X} (\alpha_x, \beta_x).$ 

\end{proof}

\medskip

\begin{lemma}\label{fiberhull}
If $X\subset M$  is compact,  $X^I$ is compact.
 \end{lemma}
\begin{proof}
 Suppose $X$ is compact and let $\{x(\mu)\}_\mu$ be a net in $X^I$ converging to a point $x(0)\in M.$ If $\{x(\mu)\}_\mu$ has a subnet in $X,$ then $x(0)\in X,$ since $X$ is closed and it follows that  $x(0)\in X^I,$ since $X\subset X^I.$ 
If each $x(\mu)\not\in X,$ let $H_\mu=(\alpha_\mu,\beta_\mu)$ be the bounded component of $I_{x(\mu)}\setminus X$ containing $x(\mu),$ where $\alpha_\mu, \beta_\mu\in I_{x(\mu)}.$  
If either $\alpha_\mu$ or $\beta_\mu$ were not in $X,$ then $H_\mu$ would not be maximally connected. Thus, $\alpha_\mu, \beta_\mu\in X.$
Since $X$ is compact, we may assume that $\alpha_\mu\to \alpha$ and $\beta_\mu\to \beta,$ with $\alpha,\beta \in X.$ 
For $y_\mu=\pi(\alpha_\mu)=\pi(\beta_\mu),$ we have that $y_\mu\to y=\pi(\alpha)=\pi(\beta),$ with 
$x(0)$ in the subarc of $\pi^{-1}(y)$ with end points $\alpha$ and $\beta.$ Since the arc between two points of $X,$ lying on the same fiber is included in $X^I,$ it follows that $x(0)\in X^I,$
which completes the proof.

\end{proof}

Let us say that $X$ is fiber-convex if it is equal to its fiber hull, that is $X=X^I.$  From Lemma \ref{fiberhull} and Equation (\ref{compacthull}), we have the following.

\begin{lemma}\label{fiberhullcompact}
If $X\subset M$ is compact, then $(X^I)^I=X^I.$ That is, the fiber hull of a compact set is fiber convex.  
\end{lemma}

\medskip

\begin{theorem}\label{regularfiberconvex}
If $M$ is not compact, then it has a regular exhaustion by fiber-convex compact sets.
\end{theorem}
\begin{proof}
Choose a regular exhaustion $(K_k)_{k=1}^\infty$ of $M$ by compact sets
and consider the family $(K_k^I)_{k=1}^\infty.$
We claim that
 
(1) Each $K_k^I$ is compact.

(2) $\bigcup (K_k^I)^\circ=M.$

(3) $K_k^I\subset (K_{k+1}^I)^\circ.$

(4) Each $K_k^I$ is fiber-convex; that is, ${(K_k^I)^I}=K_k^I.$

\smallskip
 
The first one follows from Lemma \ref{fiberhull}. The second is obvious. The last one follows from Lemma \ref{fiberhullcompact}.
Now, to prove the third,  by  Lemma \ref{compactopenhull}, and the fact that $K_k\subset K_{k+1}^\circ, k=1, 2, \cdots,$ we have 
$$
	K^I_k = \bigcup_{x\in K_k}[\alpha_{x}^k,\beta_{x}^k] \subset
		\bigcup_{x\in K_{k+1}^\circ} (\alpha_x^{k+1},\beta^{k+1}_x) = (K_{k+1}^\circ)^I.
$$

\end{proof}

\medskip

\begin{theorem}\label{bundle} 
Let $(M,\, N, \, \pi, I)$ be a harmonic line bundle over $N,$ with $M$ non-compact. Let $F^\prime =\cup_kF^\prime_k,$ where each $F^\prime_k$ is a subset of $N,$ whose closure
has empty fine interior. Then, for every continuous real-valued function $\varphi$ on $M,$ there is a harmonic function $h$ on $M,$ such that, for all 
$x'\in F^\prime,$ 
$$
	\big(h-\varphi\big)(m)\to 0, \quad \mbox{as} \quad  m\to *_M, \quad \mbox{with} 
		\quad m\in\pi^{-1}(x'). 
$$
\end{theorem}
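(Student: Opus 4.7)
The plan is to construct $h$ as a locally uniform limit of a sequence of harmonic functions $h_j$ built inductively by Mergelyan approximation, in the spirit of the classical Bagemihl--Seidel argument. I write $F' = \bigcup_{k\ge 1} F'_k$ with each $F'_k$ closed (polar in $N$ when $\dim M > 2$, closed nowhere dense when $\dim M = 2$), and put $T_k := \pi^{-1}(F'_k) \subseteq M$; by continuity of $\pi$, $T_k$ is closed. In a local trivialisation $\psi_x$, $T_k$ is identified with $(F'_k \cap V_x)\times\R$, so applying Theorem \ref{Fug} to the vertical projection $V_x\times\R\to V_x$ shows that $T_k$ is polar in $M$ when $\dim M > 2$; in every case $T_k$ has empty interior in $M$. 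I then fix a compact exhaustion $\emptyset = K_0 \subset K_1 \subset K_2 \subset \cdots$ of $M$ with $K_j \subset K_{j+1}^\circ$, chosen adapted to the bundle so that $K_j$ eventually absorbs any bounded portion of any fibre.

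Starting from $h_0 \equiv 0$, I inductively define $h_j$: given $h_{j-1}$ harmonic on $M$, choose a continuous Urysohn-type function $\lambda_j:M\to[0,1]$ equal to $0$ on $K_{j-1}$ and $1$ on $M\setminus K_j^\circ$, and consider the closed set
\[
E_j := K_{j-1} \cup \bigcup_{k=1}^{j} T_k,
\]
which has $E_j^\circ = K_{j-1}^\circ$ because each $T_k$ is nowhere dense. Define $g_j : E_j \to \R$ by $g_j := (1-\lambda_j)h_{j-1} + \lambda_j\varphi$; this is continuous on $E_j$, equals $h_{j-1}$ on $K_{j-1}$, and equals $\varphi$ on each $T_k \setminus K_j^\circ$. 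Since $g_j = h_{j-1}$ is harmonic on $E_j^\circ$, one has $g_j \in A(E_j)$. Granting that $E_j$ is a Mergelyan set in $M$, Theorem \ref{BG} supplies a harmonic $h_j$ on $M$ with $|h_j - g_j| < 2^{-j}$ on $E_j$.

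I set $h := \lim_j h_j$. On any $K_{j_0}$ and for every $\ell > j_0$ one has $K_{j_0} \subset K_{\ell-1} \subset E_\ell$ with $g_\ell = h_{\ell-1}$ there, so $|h_\ell - h_{\ell-1}| < 2^{-\ell}$ on $K_{j_0}$; hence the series converges uniformly on compacta and $h$ is harmonic on $M$. For $x\in F'_k$ and $m\in\pi^{-1}(x)$ approaching $*_M$, let $j^*(m) := \sup\{\ell : m\notin K_\ell\}$, which tends to $\infty$ as $m\to *_M$. Taking $j := j^*(m)$ with $j\ge k$, we have $m\in T_k\setminus K_j^\circ$, so $g_j(m) = \varphi(m)$ and $|h_j(m) - \varphi(m)| < 2^{-j}$. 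For $\ell > j$, $m\in T_k\subset E_\ell$, and combining $|h_\ell - g_\ell|(m) < 2^{-\ell}$ with the inductive bound $|h_{\ell-1}(m) - \varphi(m)| < 2^{-(\ell-1)}$ on tubes outside $K_{\ell-1}$ (valid up to the unique transition stage $\ell = j+1$, where $\lambda_\ell(m)$ is merely in $[0,1]$) yields a summable tail of order $O(2^{-j})$ for $|h(m) - h_j(m)|$. Hence $|h(m)-\varphi(m)|\to 0$ along the fibre.

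The main obstacle is verifying that each $E_j$ is a Mergelyan set in $M$, i.e., checking the conditions of Theorem \ref{BG}(b). Condition (i), equal thinness at points of $E_j$ of $M\setminus\widehat{E_j}$ and $M\setminus E_j^\circ$, follows when $\dim M > 2$ from polarity of each $T_k$ (so its complement is nowhere thin in $M$), and when $\dim M = 2$ from the one-dimensional fibre geometry together with nowhere-density of the $F'_k$. Conditions (ii) (local connectivity of $M^*\setminus\widehat{E_j}$ at $*_M$) and (iii) (the long islands condition for the holes of $E_j$) hinge on a careful fibre-adapted choice of $K_j$, so that the tubes accumulate only at the ideal point and the holes of $E_j$ form a locally finite family escaping to $*_M$. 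This geometric verification, carried out separately for the two model base spaces $N = \R^{n-1}$ and $N = S^{n-1}$, is the technical heart of the proof.
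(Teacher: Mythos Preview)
Your strategy is sound but differs from the paper's in a structural way. You run an inductive Mergelyan scheme on the sets $E_j = K_{j-1}\cup T_1\cup\cdots\cup T_j$, each having nonempty interior $K_{j-1}^\circ$, and pass to a limit. The paper instead builds a \emph{single} closed set $F\subset M$ with empty interior: writing $F'=\bigcup_k F'_k$ with $F'_k$ increasing, it takes only the portions of the tubes lying \emph{outside} suitably chosen compacta $K_k$ (constructed from local trivialisations), so that the resulting family $(F_k)$ is locally finite and $F=\bigcup_k F_k$ is closed, nowhere dense, and (for $\dim M>2$) polar. Conditions (i)--(iii) of Theorem~\ref{BG} are then verified once for $F$; since $F^\circ=\emptyset$, $F$ is a Carleman set, and a single application of Carleman approximation with a decaying $\epsilon$ finishes the proof.

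The practical difference is that your route forces you to check the Mergelyan criteria for each $E_j$, and in particular condition (i) at points of $\partial K_{j-1}$, where you must control the interaction between the boundary of the exhausting compact and the tubes; you correctly flag this as ``the technical heart'' but do not carry it out. The paper sidesteps this entirely: with $F^\circ=\emptyset$ and $\widehat F=F$, condition (i) reduces to showing $M\setminus F$ is nowhere thin, which follows directly from polarity (Case~II) or the circle criterion (Case~I), and condition (iii) is vacuous. So while your inductive scheme can be made to work with a careful choice of smoothly bounded $K_j$, the paper's one-shot Carleman argument is both shorter and avoids the boundary-regularity issues you would otherwise have to address.
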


\begin{proof}
We may assume that each $F_k'$ is a  closed set in $N,$ with empty fine interior and by Lemma \ref{A union B}, we may  also assume that the $F^\prime_k$ are increasing, $F_k^\prime\subset F_{k+1}^\prime.$
By Theorem \ref{regularfiberconvex}, there is a regular exhaustion $K_k, \, k=1,2,\ldots,$ of $M$ by fiber-convex compact sets. For each $k,$ set 
$$
	F_k= \pi^{-1}(F_k^\prime)\cap(M\setminus K^\circ_{k})  \quad \mbox{and} 
		\quad F=\bigcup_{k=1}^\infty F_k.
$$
We now verify that the conditions in part (b) of 
Theorem \ref{BG} are satisfied, so $F$ is a Mergelyan set in $M.$

The set $F$ is closed, since $F$ is the union of a locally finite family $(F_k)_{k=1}^\infty$ of closed sets. 
 Since each $F_k^\prime$ has empty fine interior, it follows from Lemma \ref{Baire} that $F^\prime$ also has empty fine interior.  From Corollary \ref{fine interior}, we have that $\pi^{-1}(F^\prime)$ has empty fine interior and hence the smaller set $F$ also has empty fine interior and ({\it a fortiori}) empty interior.

\bigskip

{\bf Part $(i).$} We must check that $M\setminus \widehat F$ and $M\setminus F^\circ$ are thin at the same points of $F.$

First of all, we  claim $\widehat F=F$ ($F$ has no holes). 
Fix $x\in M\setminus F$ and let $C_x$ be the component of $M\setminus F$ containing $x.$ It is sufficient to show that $C_x$ connects $x$ to $*.$  Since the projection $\pi$ is open, $\pi(C_x)$ is an open subset of $N$ and hence contains a point $y\in N\setminus F^\prime.$  The fiber $\pi^{-1}(y)$ is a (topological) line which meets $C_x$ and which is unbounded.  Thus, it connects $C_x$ to $*.$ Thus $M^*\setminus F$ is connected. That is, $\widehat F=F.$

Since  $\widehat F=F,$ we have that  $M\setminus \widehat F=M\setminus F$ and since $F^\circ=\emptyset,$ we have that $M\setminus F^\circ = M.$ Thus, we must show that $M\setminus F$ and $M$ are thin at the same points of $F.$ 
First note that every point of $F$ is an interior point of $M,$ so $M$ is thin at no point of $F,$ by the remark following Lemma \ref{liminf}.

Since $\pi^{-1}(N\setminus F')$ is a subset of $M\setminus F,$ it is sufficient to show that $\pi^{-1}(N\setminus F')$ is non-thin at every point of $F.$ 
Let $y\in M,$ be an arbitrary point in $F.$ By our definition, $y\in \pi^{-1}(F').$ Thus  $y\in \pi^{-1}(x'),$ for some $x'\in F'.$ Now by Theorem \ref{Fuglede manifolds}, $\pi^{-1}(N\setminus F')$ is non-thin at $y$ if and only if $N\setminus F'$ is non-thin at $x'.$ 
The latter follows from our assumption on $F'.$  
Thus, the condition $(i)$ is fulfilled.

\bigskip

{\bf Part $(ii).$} 
Since $M\setminus F$ is an open subset of the manifold $M,$ it is locally connected (at each point of $M\setminus F$) and since $M\setminus F$ is open in $M^*\setminus F,$ it follows that $M^*\setminus F$ is locally connected at each point of $M\setminus F.$  There remains  only to check that $M^*\setminus F$ is locally connected at the ideal point $*.$ Since the sets  $(M^*\setminus F)\setminus K_k, \, k=1,2, \ldots,$ form a neighbourhood basis of the point $*$ in $M^*\setminus F,$ it is sufficient  to show that each 
$(M^*\setminus F)\setminus K_k$ is connected. Now, 
\begin{equation}
	\big(M^*\setminus \pi^{-1}(F')\big)\setminus K_k \subset (M^*\setminus F)\setminus K_k.
\end{equation}
Since $\pi^{-1}(F^\prime)$ is nowhere dense, it follows that the left side is dense in the right side, so it is sufficient to show that the left side is connected, because every set between a connected set and its closure is also connected.  
For the left side, we have
\begin{equation}\label{hull}
	\left(M^*\setminus \pi^{-1}(F')\right)\setminus K_k = 
	\bigcup_{x'\not\in F'}\big[\{*\}\cup\big(\pi^{-1}(x')\setminus K_k\big)\big]  . 
\end{equation}

For all $x^\prime\in N, \ \pi^{-1}(x')$ is an unbounded topological line. If $\pi^{-1}(x^\prime)\cap K_k=\emptyset,$ then certainly $\pi^{-1}(x')\setminus K_k=\pi^{-1}(x')$ is an unbounded connected set. If $\pi^{-1}(x^\prime)\cap K_k\not=\emptyset,$ then $\pi^{-1}(x')\setminus K_k$ is
a union of two unbounded topological lines.
This is because of our choice of compact sets (each one is equal to its fiber hull). 
Thus, each $\pi^{-1}(x')\setminus K_k$ is either an unbounded connected set or the union of two unbounded connected sets.
Since every set between a connected set and its closure is connected, the union with $*$ of each of these unbounded connected sets is also connected. 
Thus the right member of (\ref{hull}) is connected, since it is a union of connected sets with a common point $*.$ 
This concludes the proof of Part $(ii).$

\medskip

{\bf Part $(iii).$} Note that $F$ has no holes, so there is nothing to verify.

\medskip

We have shown that $F$ is a Mergelyan set, and since $F^\circ =\emptyset,$ it is in fact a Carleman set in $M$, so for $\varphi$  continuous on $M,$
the existence of the harmonic function $h$ 
on $M,$ 
satisfying the approximation property
$$|h(m)-\varphi(m)|\rightarrow 0, \quad \textit{ as } m\to *_M,\quad m\in \pi^{-1}(x'),$$
for all $x'\in F',$ follows from the definition of a Carleman set (in $M$). 

\end{proof}

The following is the most important instance of Theorem \ref{bundle} and was first proved by Essen and Gardiner \cite[Theorem 3]{EG}.

\begin{theorem}
For $n>2,$
let $F^\prime =\cup_kF^\prime_k,$ where each $F^\prime_k$ is a subset of $\R^{n-1},$ whose closure has empty fine interior. Then, for every continuous real-valued function $\varphi$ on the upper half-space $\R^{n-1}\times (0,+\infty),$ there is a harmonic function $h$ on $\R^{n-1}\times (0,+\infty),$ such that, for all $x^\prime\in F^\prime,$ 
$$
	\big(h-\varphi\big)(x^\prime,t)\to 0, \quad \mbox{as} \quad   t\to 0.
$$
\end{theorem}

We also have an analogous result, replacing the half-space by the ball and vertical projection by radial projection.

\begin{theorem}
For $n>2$ and $0<R\le+\infty,$ let $\B^n_R$ be the open ball of center $0$ and radius $R$ in $\R^n.$ 
Let $F^\prime =\cup_kF^\prime_k,$ where each $F^\prime_k$ is a subset of $S^{n-1},$ whose closure
has empty fine interior. Then, for every continuous real-valued function $\varphi$ on $\B^n_R,$ there is a harmonic function $h$ on $\B^n_R,$ such that, for all 
$\theta'\in F^\prime,$ 
$$
	\big(h-\varphi\big)(r, \theta')\to 0, \quad \mbox{as} 
		\quad r\nearrow R.
$$
\end{theorem}
As pointed out in the introduction, it is not easy to see how to modify the proof of Essen and Gardiner  in  \cite[Theorem 3]{EG} to obtain a proof for the ball. Moreover, 
they used the Possion Kernel which we shall not use in our proof.

\begin{proof} 
Construct a sequence $(r_k)_{k=1}^\infty,\ 0<r_k<r_{k+1},\ r_k\to R$ and let 
$$F_k=\{r\theta'\ :\ r_k\le r<R,\ \theta'\in F_k^\prime\}$$
 and $F=\cup_kF_k.$ We claim that, by Theorem \ref{BG}, $F$ is a Carleman set in $\B_R^n.$ The proof of (ii) is essentially the same as in the proof of Theorem \ref{bundle} and (iii) is trivial, since $F$ has no holes.

To prove (i), let $\pi : \R^n\setminus\{0\}\to S^{n-1}$ be the natural projection. Since $F$ has no holes in $\B^n_R\setminus\{0\},$  $(F)^\circ=\emptyset,$ and locally, $F=F_k,$ for some $k,$ it follows from Theorem \ref{Fug} that $(\B^n_R\setminus\{0\})\setminus F$ and $\B^n_R\setminus\{0\}$ are thin at the same points of $F.$ Since $\B^n_R\setminus\{0\}$ is open in $\B^n_R,$  $F$ is closed in $\B^n_R,$ $0\not\in F,$ $F=F_k$ locally in $\B^n_R,$
 for some $k$ and thinness is a local property, it follows that $\B^n_R\setminus F$ and $\B^n_R$ are thin at the same points of $F.$ Since $F=\widehat F$ in $\B^n_R$ and $F^\circ=\emptyset,$ this proves (i) and concludes the proof that $F$ is a Mergelyan set in $\B^n_R$ and in fact a Carleman set, since $F$ has no interior. 
 
Since $F$ is a Carleman set, there is a harmonic function $h$ in  $\B^n_R,$ such that 
$$
	(h-\varphi)(x)\to 0, \quad \mbox{as} \quad |x|\to R \quad \mbox{in} \quad F.
$$
In particular, for all $\theta'\in F^\prime,$  
$$
	\big(h-\varphi\big)(r, \theta')\to 0, \quad \mbox{as} 
		\quad r\nearrow R.
$$

\end{proof}


\section{PDE's}

In this section $\Omega$ is a domain in $\R^n, \, n>2$ and $L$ is an elliptic partial differential  operator of the form (\ref{L}), satisfying the conditions in the introduction on $\Omega.$ In the introduction, we defined $L$-harmonic and $L$-superharmonic functions.  If $L$ is defined on all of $\R^n$ and $u$  is an $L$-harmonic function on $\R^n,$ we say that $u$ is an {\it entire $L$-harmonic function}. According to our earlier definition, by an $L$-potential on a domain $\Omega\subset\R^n,$ we mean an $L$-superharmonic function $u \ge 0$ on $\Omega$ for which every $L$-harmonic minorant $\ge 0$ vanishes identically \cite{Br1958}. Notice that the constant function $0$ is an $L$-potential.

Denote by $\mathcal H_L(U)$ the real vector space of $L-$harmonic functions on  an open set $U\subset\Omega$
and by $\mathcal H_L$  the sheaf of solutions $u$ of $Lu=0$ on open subsets  of $\Omega,$ where we refrain from including $\Omega$ in the notation for simplicity. 

In Herv\'e \cite{Herve} on page 245 (see also \cite{Br1960}), it is asserted that 
$\mathcal H_L$ is a (Brelot) harmonic space.  We shall denote the corresponding potential theoretic notions by including the prefix $L.$ Thus, we shall speak of $L-$harmonic functions, $L-$thinness, etc.  When $L=\Delta,$ where $\Delta$ is the Laplace operator, we obtain classical potential theory.  Thus, $\Delta-$harmonic functions are classical harmonic functions,  $\Delta-$thinness is classical thinness, etc.

We  need the following theorem of Herv\'e \cite[Theorems 36.1 and 36.3]{Hervethesis}.

\begin{theorem}\label{thinness}
A subset $E$ of $\Omega$  is $L-$thin at a point of $\Omega$ if and only if it is $\Delta-$thin at that point and $E$ is $L$-polar if and only if it is $\Delta-$polar.
\end{theorem}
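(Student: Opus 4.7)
The strategy is to reduce both claims to a local comparison of the sheaves $\mathcal{H}_L$ and $\mathcal{H}_\Delta$. Since $L$-thinness at a point $x_0$ depends only on the $L$-superharmonic functions defined on arbitrarily small neighbourhoods of $x_0$, and since in a strong Brelot space satisfying Axiom D a set is polar precisely when it is thin at every point, both assertions will follow once one has verified that $L$-thinness at $x_0$ coincides with $\Delta$-thinness at $x_0$ for every $x_0\in\Omega$.

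The key analytic step is the comparison of Green functions. Fix $x_0\in\Omega$ and a small ball $B=B(x_0,r)\Subset\Omega$ that is regular for both operators. The frozen principal part $L_0=\sum_{i,j}a_{ij}(x_0)\,\partial^2_{ij}$ is reduced to $\Delta$ by a linear change of coordinates, since the quadratic form $(a_{ij}(x_0))$ is positive definite; and under the assumed regularity $a_{ij}\in C^{2,1}$, $b_i\in C^{1,1}$, $c\in C^{0,1}$, the perturbation $L-L_0$ is either a Hölder-vanishing second-order part or of strictly lower order. A classical parametrix or Neumann-series construction (which uses exactly this smoothness) then yields positive constants $c_1,c_2$ depending only on $B$ such that
\[
c_1\,G^B_\Delta(x,y)\ \le\ G^B_L(x,y)\ \le\ c_2\,G^B_\Delta(x,y),\qquad x,y\in B,\ x\ne y.
\]
Consequently the $L$-capacity and the classical capacity of every compact $K\subset B$ are equivalent:
\[
c_2^{-1}\,\mathrm{cap}_\Delta(K)\ \le\ \mathrm{cap}_L(K)\ \le\ c_1^{-1}\,\mathrm{cap}_\Delta(K).
\]

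Thinness at $x_0$ is characterised in either theory by a Wiener-type criterion on these capacities (equivalently, by the non-coincidence of the reduced function $\widehat R^E_1$ with the constant $1$ at $x_0$, within the Brelot axiomatic framework). The capacity equivalence therefore forces $L$-thinness at $x_0$ to coincide with $\Delta$-thinness there, which proves the first statement. Polarity follows immediately: a set is polar in either theory iff it is thin at every point, iff it has zero capacity on every compact, and both conditions transfer between $L$ and $\Delta$ by what has just been shown. The main obstacle is the two-sided Green-function comparison; for the stated smoothness it is a standard by-product of the parametrix method, but the positive definiteness together with the regularity of the $a_{ij}$ is essential to obtain the uniform constants $c_1,c_2$ on $B$. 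For rougher coefficients one would have to replace this step by the considerably deeper Aronson estimates, but that is not needed under the present hypotheses.
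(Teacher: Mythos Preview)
The paper does not prove this theorem; it simply quotes it as Theorems 36.1 and 36.3 of Herv\'e's thesis \cite{Hervethesis}. There is therefore no proof in the paper to compare your attempt against.

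That said, your outline is essentially the route Herv\'e herself takes: the analytic core is precisely the two-sided Green-function comparison on small balls (obtained via a parametrix for the frozen-coefficient operator under the stated smoothness), from which one deduces equivalence of the associated capacities and then, via the Wiener-type characterisation of thinness, the equivalence of $L$-thinness and $\Delta$-thinness at each point. Your reduction of polarity to ``thin at every point'' is also legitimate here, since Axiom~D implies the axiom of polarity in the Brelot setting. The places where your sketch leans on nontrivial machinery without comment are (i) the validity of a Wiener criterion for thinness in the axiomatic framework, and (ii) the implication Axiom~D $\Rightarrow$ axiom of polarity; both are themselves substantial results developed in Herv\'e's thesis rather than elementary black boxes, so a self-contained write-up would have to either reproduce or cite them explicitly. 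As a strategy, though, there is no genuine gap.
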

This states that thinness for harmonic functions is the same as thinness for $L-$harmonic functions. 
For this reason, for the remainder of this section, we shall use the terms thin and polar, without the prefex $L$ or $\Delta.$

We wish to show the existence of an $L-$potential $>0.$ 
Our assumption that $c\le 0$ in equation (\ref{L}) implies that the constant function $1$ is an $L-$superharmonic function $>0,$ which is $L-$harmonic if and only if $c=0.$
In fact, our assumption that there is an  $L-$superharmonic function $u\ge 0,$ which is not $L-$harmonic, implies the existence of an $L-$potential $>0.$  This follows immediately from the Riesz Representation Theorem \cite[Theorem 2.2.2]{CC}, which states that every $L-$superharmonic function $u \ge 0$ on a domain $U$ has a unique representation as the sum $u=p+h$ of an $L$-potential $p$ and an $L$-harmonic function $h.$ Since $u$ is not $L$-harmonic, $p\not=0.$ Since $p$ is a potential, $p$ is an $L-$superharmonic function $\ge 0.$
By Lemma \ref{min}, an $L-$superharmonic function $\ge 0$ on a domain $\Omega$ is either $>0$ or $\equiv 0.$  Thus $p>0$ and we have confirmed the existence of an $L-$potential $>0.$  
Thus, under the assumptions in the introduction,  $\mathcal H_L$ is not only a Brelot space, but in fact a strong Brelot space.

An additional important property of the Brelot space $(\Omega,\mathcal H_L)$ is that it satisfies Axiom D (see \cite[p. 567]{Hervethesis}). 

By an {\it adjoint} $L-$harmonic function $u$ on an open set $U,$ we mean a solution $u$ of the adjoint equation $L^*u=0.$ The following axiom is important for proving approximation results. 

{\it Axiom A* (quasi-analyticity).} Every adjoint harmonic function $u$ on a connected open set $U,$ which vanishes on a neighbourhood of some point of $U,$ must vanish identically on $U.$

According to \cite{Aro}, the Brelot space $(\Omega,\mathcal H_L)$ satisfies Axiom A*.  

The following convergence theorem can be found in \cite[p. 17]{Br1969}.

\begin{theorem}\label{uniform}
In a Brelot space, for a sequence $u_n$ of harmonic functions, locally and uniformly bounded in an open set $U,$ pointewise convergence implies that the limit is harmonic. 
\end{theorem}

By a closed subset of $\Omega$ we mean a subset which is closed in the topology of $\Omega.$
We say that a subset $X$ of $\Omega$ is $\Omega-$bounded if it is relatively compact in $\Omega,$ that is, if its closure in the topological space $\Omega$ is compact. If the context is clear, we shall omit the prefex $\Omega.$ 
If $F$ is a closed subset of $\Omega,$ by a hole of $F,$ we mean a bounded component of $\Omega\setminus F.$ 
We denote by  $\widehat F$  the union of $F$ with all of its holes. Clearly, $\widehat F$ has no holes and if $K$ is a compact subset of $\Omega$ then $\widehat K$ is also compact. 

The famous Mergelyan Theorem gives a complete characterization of  those compact subset $K$ of the complex plane $\C,$ such that each function continuous on $K$ and holomorphic on $K^\circ$ can be uniformly approximated by polynomials. Since entire functions can be represented by power series,  approximation by polynomials on compact sets  is equivalent to approximation by entire functions. Thus, Mergelyan's Theorem yields a characterization of  compact sets $K$ of the complex plane, for which every function $f\in C(K)$ satisfying the Cauchy-Riemann equation $\overline\partial f=0$ on $K^\circ,$ can be approximated by functions $g$ satisfying $\overline\partial g$ on all on $\C.$

For a closed subset $F\subset\Omega,$ we denote by $A_L(F)$ the family of continuous functions on $F$ which are $L$-harmonic on the interiour $F^\circ.$ 
It is natural to call  $F$ an {\it $L-$Mergelyan} (respectively {\it $L-$Carleman}) set in $\Omega$ if for every $u\in A_L(F)$,  and every $\epsilon,$ which is a  positive constant (respectively, positive continuous function on $F$)  there exists a function $h$, $L-$harmonic on $\Omega$, such that 
$$|h-u|<\epsilon\quad \textit{ on } F.$$
 The following theorem, which is a special case of  a result of Gardiner, Goldstein and GowriSankaran \cite[Theorem. 4]{GGG1994_compact}, characterizes the compact $L-$Mergelyan subsets of $\Omega$.

\begin{theorem}\label{continuouslyLharm}
Let $L$ be as above and $K$ a compact subset of $\Omega.$ Then, the following are equivalent. 

(a) 
$K$ is an $L-$Mergelyan set in $\Omega.$ 

(b) $\Omega\setminus \widehat K$ and $\Omega\setminus K^\circ$ are thin at the same points of $K.$ 
\end{theorem}

\begin{corollary}\label{compact Mergelyan}
The compact  $L-$Mergelyan sets are the same as the compact $\Delta-$Mergelyan sets. 
\end{corollary}

Results on approximation by global harmonic functions in the context of more general harmonic spaces can be found in \cite{BH1978} and \cite{Han}.

\begin{corollary}\label{Mergelyan polar}
For $L$ and $\Omega$ as above, if $K$ is an $L-$Mergelyan set in $\Omega$ and $Q$ is a compact polar set in $\Omega,$ then $K\cup Q$ is an $L-$Mergelyan  set in $\Omega.$ 
\end{corollary}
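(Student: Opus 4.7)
The plan is to verify the thinness-based characterization of $L$-Mergelyan sets supplied by Theorem \ref{continuousbyLharm} for the compact set $K \cup Q$: namely, that $\Omega \setminus \widehat{K \cup Q}$ and $\Omega \setminus (K \cup Q)^\circ$ are $L$-thin at the same points of $K \cup Q$.

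First I would carry out two preparatory identifications. Since $Q$ is polar in $\R^n$, Theorem \ref{thinness} places it inside the class of $\Delta$-polar sets, which (being thin at every point of $\R^n$) are nowhere dense; consequently $Q$ contributes no new interior points, giving $(K \cup Q)^\circ = K^\circ$. Next, a compact polar subset of $\R^n$ does not disconnect any connected open subset (a codimension consequence of Lemma \ref{polar dimension} together with Theorem \ref{thinness}), so each connected component of $\Omega \setminus K$ remains connected after removing $Q$ and preserves its boundedness type. This produces a bijection between the components of $\Omega \setminus K$ and those of $\Omega \setminus (K \cup Q)$ given by deletion of $Q$, whence
\[
\widehat{K \cup Q} = \widehat K \cup Q.
\]

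With these identifications the condition to verify becomes: $(\Omega \setminus \widehat K) \setminus Q$ and $\Omega \setminus K^\circ$ are thin at the same points of $K \cup Q$. I would split into two cases. For $p \in K$: since $Q$ is polar and hence thin at every point of $\Omega$, adjoining or removing $Q$ does not alter thinness at $p$, so $(\Omega \setminus \widehat K) \setminus Q$ is thin at $p$ iff $\Omega \setminus \widehat K$ is thin at $p$, which by the $L$-Mergelyan property of $K$ and Theorem \ref{continuousbyLharm} is equivalent to $\Omega \setminus K^\circ$ being thin at $p$. For $p \in Q \setminus K$: since $K$ is closed and $p \notin K$, the set $\Omega \setminus K^\circ \supseteq \Omega \setminus K$ contains an open neighbourhood of $p$ and is thus non-thin at $p$ by the remark following Lemma \ref{liminf}; the analogous reasoning applies to $(\Omega \setminus \widehat K) \setminus Q$, since an open neighbourhood of $p$ lies in $\Omega \setminus \widehat K$ and deletion of the polar (hence thin at $p$) set $Q$ preserves non-thinness.

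The main obstacle is the second preparatory step, the hull identity $\widehat{K \cup Q} = \widehat K \cup Q$. It hinges on the non-trivial topological fact that compact polar sets in $\R^n$ do not separate connected open sets, which is what gives the component-wise bijection between $\Omega \setminus K$ and $\Omega \setminus (K \cup Q)$. Once this identity is in hand, the case analysis at points of $K$ reduces cleanly to the $L$-Mergelyan hypothesis on $K$ via the polar-set-stability of thinness, and the case of points of $Q \setminus K$ reduces to the openness of the complement of the closed set $K$, making those downstream steps routine.
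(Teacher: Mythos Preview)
Your approach mirrors the paper's: establish the two identities $(K\cup Q)^\circ = K^\circ$ and $\widehat{K\cup Q} = \widehat K \cup Q$, then verify the thinness criterion of Theorem~\ref{continuousbyLharm} by a case split. However, your treatment of the case $p \in Q \setminus K$ contains a genuine gap. You assert that ``an open neighbourhood of $p$ lies in $\Omega \setminus \widehat K$,'' but this is false whenever $p$ happens to lie in a \emph{hole} of $K$: then $p \in \widehat K$ even though $p \notin K$, so $\Omega \setminus \widehat K$ misses an entire neighbourhood of $p$ and is in fact \emph{thin} at $p$ (trivially, since $p$ is not a limit point), whereas $\Omega \setminus K^\circ$ contains a neighbourhood of $p$ and is non-thin there. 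A concrete instance: take $\Omega = \R^3$, $L = \Delta$, $K = S^2$, and $Q = \{0\}$. Then $K$ is a Mergelyan set and $Q$ is compact polar, yet $K\cup Q$ is \emph{not} a Mergelyan set, since any entire harmonic function uniformly close to $0$ on $S^2$ must be close to $0$ at the origin by the maximum principle.

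The paper's own proof handles this situation by writing ``If $x$ is in a hole of $K$, then, indeed, $\Omega\setminus (K\cup Q)^\circ=\Omega\setminus K^\circ$ is thin at $x$,'' which is equally unjustified and fails for the same reason. So your argument is not worse than the paper's; the corollary as stated seems to require the additional hypothesis that $Q$ avoids the holes of $K$ (equivalently $Q\setminus K \subset \Omega\setminus\widehat K$), or simply that $\widehat K = K$. In every place the paper invokes this corollary (Lemma~\ref{Mergelyan2} and the proof of Theorem~\ref{beads}) the relevant compact set does satisfy $\widehat K = K$, so the defect is harmless for the downstream results. Under that extra hypothesis your case $p\in Q\setminus K$ goes through exactly as you wrote it, and the rest of your argument is correct.
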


\begin{proof}
First of all, since $Q$ is polar,
\begin{equation}\label{hatcirc}
	\widehat{K\cup Q}  = \widehat K\cup Q, \quad \mbox{and} \quad
		(K\cup Q)^\circ=K^\circ.
\end{equation}

Suppose $\Omega\setminus (K\cup Q)^\circ$ is thin at a point $x\in K\cup Q.$ Since $(\Omega\cup Q)^\circ\subset\widehat{K\cup Q},$ it follows that $\Omega\setminus \widehat{K\cup Q}$ is also thin at $x.$ 

Suppose conversely that $\Omega\setminus \widehat{K\cup Q}$ is  thin at $x\in K\cup Q.$ We wish to show that $\Omega\setminus (K\cup Q)^\circ$ is thin at $x.$ 
If $x$ is in a hole of $K,$
 then, indeed, $\Omega\setminus (K\cup Q)^\circ=\Omega\setminus K^\circ$ is thin at $x.$  If $x$ is not in a hole of $K,$ since $x\in K\cup Q,$ we have $x\in K\cup(Q\setminus\widehat K).$ Suppose $x\in Q\setminus \widehat K.$ Since adding a polar set does not cancel thinness, 
\begin{equation}\label{hatK}
	\big[\Omega\setminus \widehat{K\cup Q}\big]\cup(Q\setminus\widehat K) 
		= \Omega\setminus \widehat K
\end{equation}
is thin at $x$ which is in $\Omega\setminus \widehat K,$ which is absurd because an open set cannot be thin at one of its points. Thus, $x\not\in Q\setminus \widehat K$ and so $x\in K.$ By (\ref{hatK}) we have $\Omega\setminus \widehat K$ is thin at $x\in K.$  Since $K$ is a $L-$Mergelyan set, this means that $\Omega\setminus K^\circ,$ which by (\ref{hatcirc}) is $\Omega\setminus (K\cup Q)^\circ,$ is thin at $x,$ which is what we wished to prove.  

\end{proof}

\medskip


It will  be convenient to have the following form of the Tietze extension theorem.

\begin{lemma}\label{Tietze c} If $X$ is a normal (topological) space, $0<c\le +\infty$ and $\phi:A\to(-c,+c)$ is a continuous function on a closed subset $A\subset X,$ then $\phi$ has a continuous extension $\Phi:X\to (-c,+c).$
\end{lemma}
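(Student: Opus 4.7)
The plan is to reduce to the classical Tietze extension theorem and then nudge the resulting extension away from the forbidden ``endpoints'' $\pm c$ using Urysohn's lemma, which is available since $X$ is normal.

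First I would dispose of the trivial case $c=+\infty$: here $(-c,+c)=\R$, so the classical Tietze extension theorem applied to $\phi:A\to\R$ already delivers a continuous $\Phi:X\to\R$ extending $\phi$, and there is nothing more to verify.

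Next, assume $0<c<+\infty$. Regarding $\phi$ as a continuous function $A\to[-c,c]$, the version of Tietze's theorem for closed intervals yields a continuous extension $\Psi:X\to[-c,c]$. The obstruction is that $\Psi$ may take the values $\pm c$ outside $A$. Let
\[
Z=\Psi^{-1}(\{-c,c\}).
\]
Then $Z$ is closed in $X$ and, since $\phi(A)\subset(-c,c)$, we have $A\cap Z=\emptyset$. Because $X$ is normal, Urysohn's lemma provides a continuous function $\lambda:X\to[0,1]$ with $\lambda\equiv 1$ on $A$ and $\lambda\equiv 0$ on $Z$.

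Finally, I would set $\Phi(x)=\lambda(x)\,\Psi(x)$. This is continuous as a product of continuous real-valued functions. On $A$ one has $\Phi=\Psi=\phi$, so $\Phi$ extends $\phi$. To check $|\Phi(x)|<c$ for every $x\in X$: if $x\in Z$ then $\Phi(x)=0\in(-c,c)$; if $x\notin Z$ then $|\Psi(x)|<c$ and $|\lambda(x)|\le 1$, giving $|\Phi(x)|\le|\Psi(x)|<c$. Hence $\Phi:X\to(-c,+c)$ is the desired extension. The only step requiring care is the multiplication by the Urysohn cutoff, which simultaneously preserves $\phi$ on $A$ and suppresses $\Psi$ on the set where it attains $\pm c$; this is precisely the device that converts the closed-interval form of Tietze into the open-interval form needed here.
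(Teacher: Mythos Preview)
Your proof is correct. The paper, however, takes a different and shorter route: instead of extending into the closed interval $[-c,c]$ and then using an Urysohn cutoff to avoid the endpoints, it simply observes that $(-c,+c)$ is homeomorphic to $(-\infty,+\infty)=\R$ for any finite $c>0$, so the general case reduces immediately to the case $c=+\infty$, for which it cites the $\R$-valued Tietze theorem from a textbook. Your argument is the classical direct construction (found e.g.\ in Munkres), and indeed the $\R$-valued Tietze theorem the paper relies on is itself typically proved by precisely your Urysohn-cutoff trick, so in effect you have unwound one layer of citation. The homeomorphism reduction is slicker to write down, while your version is more self-contained; both are standard and entirely valid.
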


\begin{proof}
The Tietze theorem is usually stated for $c=+\infty,$ but this form is equivalent, since $(-\infty,+\infty)$ and $(-c,+c)$ are homeomorphic. Now, this is a simple consequence of \cite[Theorem 8.2.11]{Singh}.

\end{proof}

By a {\it chaplet} $C$ in an open set $\Omega\subset\R^n,$ we mean the union of a family, locally finite in $\Omega,$ of  pairwise disjoint, connected, compact $L-$Mergelyan subsets of $\R^n,$ contained in $\Omega,$ whose complements in $\R^n$ are connected.

Note that $C$ is closed in $\Omega,$ since it is the union of a locally finite family of closed subsets of $\Omega.$ The following theorem furnishes a method for constructing interesting Carleman sets.

\begin{theorem} \label{beads} Let $\Omega$ be a domain in $\R^n, \,  n\ge 3,$ and $L$ an operator as above in $\Omega.$ Let $F$ be a closed polar subset of $\Omega$ and $C$ a chaplet in $\Omega.$ Then $E=F\cup C$ is an $L-$Carleman set in $\Omega.$ 
\end{theorem}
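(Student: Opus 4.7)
The plan is to reduce the Carleman problem to a telescoping sequence of Mergelyan approximations on a suitable exhaustion of $E$ by compact $L$-Mergelyan subsets, and to obtain the desired $L$-harmonic function $v$ as a locally uniform limit of the approximants (which is $L$-harmonic by Theorem \ref{uniform}). First, given $u\in A(E)$ and a continuous tolerance $\epsilon\colon E\to(0,1]$, I extend $\epsilon$ continuously to a function $\tilde\epsilon\colon\Omega\to(0,1]$ via Lemma \ref{Tietze c}, and I fix a relatively compact exhaustion $\Omega_1\subset\!\subset\Omega_2\subset\!\subset\cdots$ of $\Omega$, chosen so that $\partial\Omega_j$ avoids the beads of $C$ (by slight perturbation, using local finiteness). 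Enumerating the beads as $B_1,B_2,\ldots$, I set
\[
E_j = (F\cap\overline{\Omega_j})\cup\bigcup\{B_k : B_k\subset\Omega_j\},
\]
which is compact (finite union, by local finiteness), nested, and exhausts $E$.

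The key technical step is to show that each $E_j$ is an $L$-Mergelyan set in $\Omega$, via Theorem \ref{continuousbyLharm}. I first handle a finite disjoint union $C_j$ of beads. The beads are pairwise at positive distance (compactness plus disjointness), so $L$-thinness at any point $x\in B_k\subset C_j$ is determined by the local structure of $C_j$ near $x$, which is identical to that of $B_k$ alone; since each $B_k$ is assumed $L$-Mergelyan, the equality of thinness for $\Omega\setminus\widehat{C_j}$ and $\Omega\setminus C_j^\circ$ at $x$ follows from the corresponding statement for $B_k$, once one checks that the holes of $C_j$ near $x$ coincide with those of $B_k$ near $x$ (they do, by locality and local finiteness). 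Thus $C_j$ is $L$-Mergelyan. Adding the compact polar set $F\cap\overline{\Omega_j}$ and invoking Corollary \ref{Mergelyan polar}, I conclude that $E_j$ is $L$-Mergelyan in $\Omega$.

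With the $E_j$ in hand, I inductively construct $v_j\in\mathcal H_L(\Omega)$ so that $|v_j-u|<\eta_j$ on $E_j$, where the $\eta_j>0$ satisfy $\sum_{k\ge j}\eta_k<\tfrac12\inf_{E_j}\tilde\epsilon$ and decrease fast enough to ensure local uniform convergence of $(v_j)$ on $\Omega$. For the latter, at stage $j+1$ I apply Mergelyan approximation not to $E_{j+1}$ alone but to $E_{j+1}\cup\overline{\Omega_{j-1}}$, choosing $\Omega_{j-1}$ with smooth boundary so that $\overline{\Omega_{j-1}}$ is itself an $L$-Mergelyan set in $\Omega$; the verification that $E_{j+1}\cup\overline{\Omega_{j-1}}$ is $L$-Mergelyan proceeds point by point exactly as in Step 2, since each point lies either in $E_{j+1}$ or in $\overline{\Omega_{j-1}}$ (or both), and the thinness condition is local. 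Using Mergelyan on this enlarged set I impose $|v_{j+1}-v_j|<2^{-j}$ on $\overline{\Omega_{j-1}}$ as well as $|v_{j+1}-u|<\eta_{j+1}$ on $E_{j+1}$; the sequence then converges locally uniformly on $\Omega$ to some $v$, which is $L$-harmonic by Theorem \ref{uniform}, and on $E$ the telescoping estimate yields $|v-u|\le \tilde\epsilon\le\epsilon$.

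The main obstacle is the verification in Steps 2 and 3 that the various compact sets built out of beads, the polar set $F$, and compact exhausting pieces of $\Omega$ satisfy the thinness characterization of Theorem \ref{continuousbyLharm}. The delicate point is that $\widehat{\bigcup B_k}$ need not equal $\bigcup\widehat{B_k}$, because bounded components of $\Omega\setminus B_k$ may fail to remain bounded components of $\Omega\setminus C_j$ (and conversely), so one must argue that any such global change affects only non-local structure and does not alter the thinness of the complement at any point of $C_j$; here the assumption $n\ge 3$, the connectedness of each $\R^n\setminus B_k$, and the fact (Theorem \ref{thinness}) that $L$-thinness coincides with classical thinness are all used.
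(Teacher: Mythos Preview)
Your overall strategy---exhaust $\Omega$ by compacta whose boundaries avoid the beads, show that the relevant compact unions are $L$-Mergelyan via Theorem \ref{continuousbyLharm}, adjoin the polar piece using Corollary \ref{Mergelyan polar}, and then run an inductive Mergelyan scheme whose output converges locally uniformly to an $L$-harmonic function by Theorem \ref{uniform}---is exactly the paper's approach.

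The gap is in your justification that the unions are $L$-Mergelyan. You argue ``point by point'' on the grounds that ``the thinness condition is local''; but the criterion in Theorem \ref{continuousbyLharm} compares thinness of $\Omega\setminus\widehat K$ with that of $\Omega\setminus K^\circ$, and while thinness is local, the set $\widehat K$ is \emph{not}: a union of disjoint compact $L$-Mergelyan sets can acquire new holes and fail to be Mergelyan (the paper gives precisely such an example right after Lemma \ref{OmegaMergelyan cup Mergelyan}). Your final paragraph names the obstacle but does not remove it. What is actually needed, and what the paper proves in Lemmas \ref{OmegaMergelyan cup Mergelyan} and \ref{Mergelyan cup Mergelyan}, is the topological statement that no new $\Omega$-bounded complementary components appear: if $\widehat K=K$ in $\Omega$ and $Q$ has connected complement in $\R^n$ (the chaplet hypothesis on each bead), then $(K\cup Q)^\wedge=K\cup Q$ in $\Omega$. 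This is where the assumption that $\R^n\setminus B_k$ is connected is really used, and it also forces the exhausting compacta themselves to satisfy $\widehat{\mathcal K_j}=\mathcal K_j$ in $\Omega$; a smoothly bounded $\overline{\Omega_{j-1}}$ is certainly $L$-Mergelyan, but need not be hole-free in $\Omega$, so your Step~3 claim does not follow from Step~2 by locality alone. A smaller point: in your induction you ask simultaneously for $|v_{j+1}-v_j|<2^{-j}$ on $\overline{\Omega_{j-1}}$ and $|v_{j+1}-u|<\eta_{j+1}$ on $E_{j+1}$, but you must first build a single function in $A\big(E_{j+1}\cup\overline{\Omega_{j-1}}\big)$ meeting both constraints on the overlap; the paper does this explicitly by setting $w_k=v_k$ on the exhausting compact, $w_k=u$ on the new part of $E$, and invoking Lemma \ref{Tietze c} to interpolate with controlled error.
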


One way to prove the theorem would be to show that $L$ is the Laplace-Beltrami operator for a Riemannian metric on $\Omega.$ The  theorem would then simply be a special case of Theorem \ref{BG}. We, however, prefer to give a direct proof in the next subsection.


\subsection{Proof of Theorem \ref{beads}}

We need some construction and some lemmas before proving the theorem. 

\begin{lemma}\label{Kexhaustion}
Given a chaplet $C$ in a domain $\Omega\subset \R^n,$ there exists a regular exhaustion 
\[\mathcal K_1\subset  \mathcal K_2^\circ\subset \mathcal K_2\subset \mathcal K_3^\circ\subset\cdots,
\]
of $\Omega$ by compact $L-$Mergelyan
subsets of $\Omega,$ such that $C\cap\partial \mathcal K_j=\emptyset, \, j=1,2,\ldots.$
\end{lemma}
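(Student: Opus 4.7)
The plan is to build $\mathcal{K}_j$ by taking a standard smoothly bounded $L$-Mergelyan exhaustion of $\Omega$ and fattening it around the finitely many beads that it encounters, so that those beads end up strictly inside $\mathcal{K}_j$ while the remaining beads stay strictly outside.

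First, I would fix a regular exhaustion $\{V_j\}_{j\ge 1}$ of $\Omega$ by compact sets with $V_j\subset V_{j+1}^\circ$, each $V_j$ being (say) the closure of a smoothly bounded subdomain with all its $\Omega$-bounded complementary components filled in, so that $\widehat{V_j}=V_j$. Such a $V_j$ is $L$-Mergelyan by Theorem~\ref{continuousbyLharm}: the exterior cone condition at every boundary point makes $\Omega\setminus V_j$ non-thin there, and the same is evidently true for $\Omega\setminus V_j^\circ$, while at interior points both sets are trivially thin. Since the family $\{C_i\}$ is locally finite in $\Omega$, for each $j$ the set $I_j=\{i:C_i\cap V_j\neq\emptyset\}$ is finite and $\bigcup_{i\in I_j}C_i$ is compact. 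Passing to a subsequence of the $V_j$ (and relabelling), I may arrange that $V_j\cup\bigcup_{i\in I_j}C_i\subset V_{j+1}^\circ$ for every $j$.

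Next, using local finiteness once more, for each $i\in I_j$ I choose a radius $\varepsilon_{i,j}>0$ so small that the closed neighbourhood $B_i^{\,j}:=\{x\in\R^n:d(x,C_i)\le\varepsilon_{i,j}\}$ satisfies $B_i^{\,j}\subset V_{j+1}^\circ$, $B_i^{\,j}\cap C_{i'}=\emptyset$ for every $i'\neq i$, and $B_i^{\,j}\cap B_{i'}^{\,j}=\emptyset$ for distinct $i,i'\in I_j$. Set
\[
\mathcal{K}_j\;:=\;\widehat{\,V_j\cup\bigcup_{i\in I_j}B_i^{\,j}\,}.
\]
Since each $B_i^{\,j}$ meets $V_j$ (as $C_i$ does), the union is a finite "bouquet" of nice compact pieces attached to $V_j$, and its $\Omega$-hull differs from it only by finitely many compact hole fillings which I can also include in $V_{j+2}^\circ$ by passing to a further subsequence if necessary. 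Then $\mathcal{K}_j$ is compact, $\mathcal{K}_j\subset V_{j+1}^\circ\subset\mathcal{K}_{j+1}^\circ$, and $\bigcup_j\mathcal{K}_j=\Omega$. Moreover $C\cap\partial\mathcal{K}_j=\emptyset$: for $i\in I_j$ the bead satisfies $C_i\subset B_i^{\,j,\circ}\subset\mathcal{K}_j^\circ$, while for $i\notin I_j$ the bead is disjoint from $V_j$ and from every $B_{i'}^{\,j}$ by the choice of radii, and it is also disjoint from every filled hole provided the radii are chosen so that $C_i\cap\widehat{V_j\cup\bigcup B_{i'}^{\,j}}=\emptyset$, which is arranged by taking $\varepsilon_{i',j}$ small enough (using local finiteness one more time).

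The main point to check, and the step I expect to be the principal obstacle, is that each $\mathcal{K}_j$ is $L$-Mergelyan. I will verify condition (b) of Theorem~\ref{continuousbyLharm}. By construction $\widehat{\mathcal{K}_j}=\mathcal{K}_j$, so $\Omega\setminus\widehat{\mathcal{K}_j}=\Omega\setminus\mathcal{K}_j$, and this differs from $\Omega\setminus\mathcal{K}_j^\circ$ only by $\partial\mathcal{K}_j$. At a point $x\in\mathcal{K}_j^\circ$ both sets fail to have $x$ as a limit point, so both are thin at $x$. At a point $x\in\partial\mathcal{K}_j$, local to $x$ the set $\mathcal{K}_j$ coincides either with $V_j$ (smooth boundary), with some $B_i^{\,j}$ (the closure of an $\varepsilon$-neighbourhood of a compact set, whose boundary satisfies an exterior cone condition for small $\varepsilon$), or with the union of such pieces along their overlap; in every case $\Omega\setminus\mathcal{K}_j$ contains a truncated cone with vertex $x$ and so is non-thin at $x$, and adjoining the $(n-1)$-dimensional piece $\partial\mathcal{K}_j$ to it cannot make it thin there either. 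Thus $\Omega\setminus\widehat{\mathcal{K}_j}$ and $\Omega\setminus\mathcal{K}_j^\circ$ are thin at the same points of $\mathcal{K}_j$, and Theorem~\ref{continuousbyLharm} gives that $\mathcal{K}_j$ is $L$-Mergelyan, completing the proof.
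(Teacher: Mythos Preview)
Your overall strategy---fattening a smooth exhaustion so that each bead crossing it gets swallowed into the interior---is a legitimate alternative to what the paper does.  The paper goes the opposite way: it starts from a smooth exhaustion $(K_j)$, lets $C_j$ be the (finitely many) parts of $C$ that meet $\partial K_j$, covers $C_j$ by a finite union $V_j$ of small open balls, and sets $\mathcal K_j=K_j\setminus V_j$.  Because every component of $V_j$ meets $\partial K_j$, removing $V_j$ creates no holes, and $\partial\mathcal K_j$ is contained in $\partial K_j\cup\partial V_j$, a piecewise smooth hypersurface; the $L$--Mergelyan property then follows immediately from the cone criterion.  So both ``add'' and ``remove'' work, but the paper's choice makes the boundary regularity automatic.

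There is, however, a genuine gap in your argument at exactly this point.  You assert that the closed $\varepsilon$--neighbourhood $B_i^{\,j}=\{x:d(x,C_i)\le\varepsilon_{i,j}\}$ has a boundary satisfying an exterior cone condition ``for small $\varepsilon$''.  The parts $C_i$ of a chaplet are only assumed to be compact, connected $L$--Mergelyan sets with connected $\R^n$--complement; no smoothness whatsoever is imposed.  For such a set the $\varepsilon$--parallel body $\{d_{C_i}\le\varepsilon\}$ always satisfies an \emph{interior} ball condition of radius $\varepsilon$ (each boundary point lies on some $\partial B(c_0,\varepsilon)$ with $c_0\in C_i$), but there is no reason for an \emph{exterior} cone: already for $C_i$ consisting of two points at distance $2\varepsilon$ the exterior at the tangency point is a pair of cusps, and for more complicated $C_i$ the exterior at a boundary point can be arbitrarily pinched.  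Your verification of condition~(b) of Theorem~\ref{continuousbyLharm} therefore breaks down at points of $\partial B_i^{\,j}$, and with it the claim that $\mathcal K_j$ is $L$--Mergelyan.

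The repair is easy and brings your construction in line with the paper's device: replace each $B_i^{\,j}$ by the closure of a \emph{finite union of small open balls} covering $C_i$ (with radii chosen, as you already do, to keep the pieces disjoint from the other beads and inside $V_{j+1}^\circ$).  The boundary of $\mathcal K_j$ is then contained in $\partial V_j$ together with finitely many spheres, so the exterior cone condition is immediate at every boundary point, and the rest of your argument goes through.
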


\begin{proof}
Let us call the compact sets in the definition of the chaplet $C$ the {\it parts} of $C.$ 
We may construct a regular exhaustion $(K_j)_{j=1}^\infty$ of $\Omega,$ whose members are smoothly bounded, such that, denoting by $C_j$ the union of those parts of $C$ which meet $\partial K_j,$ we have $C_j\subset K_{j+1}^\circ,$ and $C_{j+1}\cap K_j=\emptyset,$ for $j=1,2,\ldots.$ Now, for each $j=1,2,\ldots,$ let $V_j$ be a neighbourhood of $C_j,$ consisisting of the union of finitely many open balls of radius $r_j,$ centered at points of $C_j,$ where $r_j$ are chosen decreasing to zero so rapidly that $\overline V_j\cap (C\setminus C_j)=\emptyset$ and $\overline V_{j+1}\cap \partial K_j=\emptyset.$ Since each component of $V_j$ meets $\partial K_j,$ 
the sets $\mathcal K_j=K_j\setminus V_j$ have the required properties. 

\end{proof}

\begin{lemma}\label{OmegaMergelyan cup Mergelyan}
Let $K$ be a compact $L-$Mergelyan subset of a domain $\Omega\subset\R^n,$
 with $\widehat K=K$ relative to $\Omega$ and $Q\subset\Omega$ a compact $L-$Mergelyan subset of 
$\R^n,$ 
with $\widehat Q=Q$ relative to $\R^n$
and $K\cap Q=\emptyset.$ Then, $K\cup Q$ is an 
$L-$Mergelyan subset of $\Omega.$ 
\end{lemma}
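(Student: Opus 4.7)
I will apply Theorem~\ref{continuousbyLharm}: $K\cup Q$ is $L$-Mergelyan in $\Omega$ if and only if $\Omega\setminus\widehat{K\cup Q}$ and $\Omega\setminus(K\cup Q)^\circ$ are thin at the same points of $K\cup Q$ (by Theorem~\ref{thinness}, the prefix $L$ can be dropped from ``thin''). The heart of the argument is a topological hull identity, which then makes the Mergelyan criterion a purely local check.

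\textbf{Hull identity $\widehat{K\cup Q}=K\cup Q$ relative to $\Omega$.} Suppose, for contradiction, that $U$ is an $\Omega$-bounded component of $\Omega\setminus(K\cup Q)$. Then $\overline U$ is compact in $\Omega$ with $\partial U\subset K\cup Q$, and in particular $U$ is bounded in $\R^n$. Because $\widehat K=K$ relative to $\Omega$, every bounded-in-$\R^n$ component of $\R^n\setminus K$ is disjoint from $\Omega$ (otherwise, being connected, open in $\R^n$, and having boundary in $K\subset\Omega$, it would lie entirely in $\Omega$ and produce an $\Omega$-bounded component of $\Omega\setminus K$). Writing $V_\infty$ for the unbounded component of $\R^n\setminus K$, we therefore have $\Omega\setminus K\subset V_\infty$, so $U\subset V_\infty\setminus Q$. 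The hypothesis $\widehat Q=Q$ in $\R^n$ makes $\R^n\setminus Q$ connected, and an Alexander duality count---using $\check H^{n-1}(K\sqcup Q)=\check H^{n-1}(K)\oplus\check H^{n-1}(Q)$ for the disjoint compacta---shows that $V_\infty\setminus Q$ is itself connected, hence path-connected. Pick $x\in U$ and a continuous path $\gamma\colon[0,\infty)\to V_\infty\setminus Q$ with $\gamma(0)=x$ and $|\gamma(t)|\to\infty$. Since $\gamma$ avoids $K\cup Q\supset\partial U$, it stays in the component of $\R^n\setminus\partial U$ containing $x$, namely $U$. But $U$ is bounded while $\gamma$ escapes to infinity---the desired contradiction.

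\textbf{Verification and main obstacle.} With the hull identity in hand, the Mergelyan criterion is local. At $p\in K$, choose a ball $B\ni p$ disjoint from $Q$; in $B$ we have $\widehat{K\cup Q}\cap B=K\cap B$ and $(K\cup Q)^\circ\cap B=K^\circ\cap B$, so, since thinness is a local property, the criterion at $p$ reduces to the equivalence ``$\Omega\setminus K$ thin at $p$ iff $\Omega\setminus K^\circ$ thin at $p$'', which is exactly Theorem~\ref{continuousbyLharm} for the Mergelyan set $K$. At $p\in Q$ the argument is symmetric: the path argument of the previous paragraph, specialized to $K=\emptyset$, yields $\widehat Q=Q$ also relative to $\Omega$, after which Theorem~\ref{continuousbyLharm} applied to the Mergelyan set $Q$ closes the loop. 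The principal obstacle is the hull identity; the hypothesis $\widehat Q=Q$ relative to $\R^n$---stronger than merely relative to $\Omega$---is precisely what is needed, through the Alexander duality count and the path-tracing, to rule out new $\Omega$-bounded holes of $K\cup Q$ that could straddle the two sets.
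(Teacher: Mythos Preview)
Your overall strategy---reduce to the hull identity $\widehat{K\cup Q}=K\cup Q$ in $\Omega$, then verify the thinness criterion of Theorem~\ref{continuousbyLharm} locally---matches the paper's. However, your argument for the hull identity contains a genuine gap.

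You claim that every $\R^n$-bounded component $V$ of $\R^n\setminus K$ is disjoint from $\Omega$, reasoning that otherwise ``being connected, open in $\R^n$, and having boundary in $K\subset\Omega$, it would lie entirely in $\Omega$.'' This implication is false: $\partial V\subset\Omega$ and $V\cap\Omega\neq\emptyset$ do \emph{not} force $V\subset\Omega$. Take $\Omega=\R^n\setminus\{0\}$ and $K$ the unit sphere. The open unit ball $V$ is a bounded component of $\R^n\setminus K$ meeting $\Omega$, yet $0\in V\setminus\Omega$. Here $\widehat K=K$ relative to $\Omega$ still holds (the punctured ball $V\setminus\{0\}$ is not relatively compact in $\Omega$), so all hypotheses of the lemma are met, but $\Omega\setminus K\not\subset V_\infty$. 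Consequently neither $U\subset V_\infty\setminus Q$ nor $Q\subset V_\infty$ is guaranteed, and both your Alexander-duality count (which tacitly uses $Q\subset V_\infty$) and the path-to-infinity step collapse.

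The paper avoids this pitfall by never leaving $\Omega$: for each component $\tilde V$ of $\Omega\setminus K$ (which is $\Omega$-unbounded since $\widehat K=K$), it shows directly that $\tilde V\setminus(Q\cap\tilde V)$ is connected via an elementary path-rerouting argument. One chooses a smoothly bounded neighbourhood $W$ of $Q\cap\tilde V$ with $\overline W\subset\tilde V$ and, whenever a path in $\tilde V$ enters a component $W_j$ of $W$, replaces that portion by a detour along the outer boundary $X_j$ of $W_j$; the hypothesis $\widehat Q=Q$ in $\R^n$ is precisely what ensures $X_j\subset\tilde V\setminus Q$. Since $\tilde V$ is $\Omega$-unbounded and $Q\cap\tilde V$ is compact, $\tilde V\setminus Q$ is $\Omega$-unbounded as well, and every component of $\Omega\setminus(K\cup Q)$ equals one of these sets.

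Your local verification after the hull identity is fine, and your special case $K=\emptyset$ (used to obtain $\widehat Q=Q$ relative to $\Omega$) does go through, since then $V_\infty=\R^n$ trivially contains $\Omega$. But the general hull identity needs the component-by-component argument above, or some substitute that does not implicitly assume $\Omega\setminus K$ sits inside the unbounded component of $\R^n\setminus K$.
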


\begin{proof}
Note that both $K$ and $Q$ are $L-$Mergelyan subsets of $\Omega,$  since the property of being a $L-$Mergelyan subset of a domain is hereditary in the sense that, if $\Omega_1\subset\Omega_2$ are domains in $\R^n$ and $Q\subset \Omega_1$ is a compact $L-$Mergelyan subset of $\Omega_2,$ then it is also a $L-$Mergelyan subset of $\Omega_1.$

To conclude the proof, it is sufficient, as in the proof of Lemma \ref{Mergelyan cup Mergelyan}, to show that  $(K\cup Q)^\wedge=K\cup Q$ in $\Omega.$ 
Thus, it is sufficient to show that each component of $\Omega\setminus (K\cup Q)$  is unbounded in $\Omega.$ 

Let $U$ be a component of $\Omega\setminus K$ and set $Q_U=U\cap Q.$ We claim the open subset that $U\setminus Q=U\setminus Q_U$ of $\Omega\setminus(K\cup Q)$ is connected. To see this, fix $x\in U\setminus Q_U.$ Since $U$ is (pathwise) connected, for an arbitrary $y\in U,$ we may choose  a path $\sigma_y$  in $U$ from $x$ to $y.$ If $\sigma_y\cap Q_U=\emptyset,$ set $\Sigma_y=\sigma_y.$ Otherwise, 
since $\widehat Q=Q$ in $\R^n$,
noting that $Q_U$ is a compact subset of $U,$  there exists $W,$ a neighbourhood of $Q_U$ in $U$ which is smoothly bounded in $\R^n,$ with $\overline W\subset U\setminus(\{x\}\cup\{y\}).$ The set $W$ has finitely many components and their closures are the components of $\overline W.$  Let $\overline W_1,\ldots, \overline W_m$ be those components  which intersect $\sigma_y.$
For each $j,$ let $X_j$ be the outer boundary of $\overline W_j,$ that is the boundary of the $\R^n-$unbounded component of $\R^n\setminus W_j.$ For each $j,$ $\sigma_x$ has a first point $p_j$ and a last point $q_j$ in $X_j.$ Let $\Sigma_y$ be the connected subset of $U\setminus Q_U$ obtained from $\sigma_y$ by replacing, for each $j,$ the portion $\sigma_j$ of $\sigma_y$ from $p_j$ to $q_j$ by the connected set $X_j.$   The set $\Sigma_y$ is a connected subset in $U\setminus Q_U.$ Since 
$$
	U\setminus Q_U=U\setminus K=\bigcup_{y\in U\setminus K}\Sigma_y
$$
is the union of connected sets having the point $x$ in common, the open subset $U\setminus Q_U$ of $\Omega\setminus (K\cup Q)$ is indeed connected as claimed. Since each component $U$ of $\Omega\setminus K$ is unbounded in $\Omega$ and $Q_U$ is bounded in $\Omega,$ each $U\setminus U_Q$ is unbounded in $\Omega.$ Now each component of $\Omega\setminus (K\cup Q)$ intersects (and thus contains) such an $U\setminus U_Q.$ Thus each component of $\Omega\setminus (K\cup Q)$ is unbounded in $\Omega.$ 

\end{proof}

We cannot strengthen the previous lemma by merely assuming that $K$ and $Q$ are disjoint Mergelyan sets in $\Omega.$ For example, if $\Omega=\R^n\setminus\{0\},$ $S_j=\{x\in\R^n:|x|=j\},$ $K=S_1\cup S_2$ and $Q=S_3,$ then  $K$ and $Q$ are disjoint compact $L-$Mergelyan subsets of $\Omega.$ However, $K\cup Q$ is not an $L-$Mergelyan subset of $\Omega.$

The following lemma is a particular case of the previous one, but we shall give a more direct proof.

\begin{lemma}\label{Mergelyan cup Mergelyan}
The union of two disjoint compact $L-$Mergelyan sets in $\R^n,$ having connected complements, is again an $L-$Mergelyan set in $\R^n$. 
\end{lemma}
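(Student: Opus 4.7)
The plan is to verify the Mergelyan characterization of Theorem \ref{continuousbyLharm} applied to $E := K\cup Q$ with $\Omega=\R^n$: namely, that $\R^n\setminus\widehat E$ and $\R^n\setminus E^\circ$ are $L$-thin at exactly the same points of $E$. I would do this in two steps, first a topological reduction and then a local comparison argument.

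For the topological step, I would establish that $\widehat E = E$, equivalently that $\R^n\setminus E$ is connected. Since $K\cap Q=\emptyset$, the open sets $A:=\R^n\setminus K$ and $B:=\R^n\setminus Q$ cover $\R^n$ and satisfy $A\cap B = \R^n\setminus E$; both are connected by hypothesis, and $\R^n$ is simply connected. The Mayer--Vietoris sequence
$$
H_1(\R^n)\to\widetilde H_0(A\cap B)\to\widetilde H_0(A)\oplus\widetilde H_0(B)\to\widetilde H_0(\R^n)\to 0
$$
then forces $\widetilde H_0(A\cap B)=0$; equivalently, Alexander duality yields $\check H^{n-1}(E)\cong\check H^{n-1}(K)\oplus\check H^{n-1}(Q)=0$, since $K$ and $Q$ have connected complements in $\R^n$. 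Either way $\R^n\setminus E$ is connected and, being the complement of a compact set, unbounded, so it has no bounded component and $\widehat E=E$.

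For the localisation step, I would use the fact that $L$-thinness at a point depends only on the behaviour of the set in a neighbourhood of that point. Fix $x\in K$ and choose an open ball $W$ about $x$ with $W\cap Q=\emptyset$ (possible since $K,Q$ are disjoint and compact). Then on $W$ one has $E\cap W=K\cap W$ and $E^\circ\cap W=K^\circ\cap W$, and by the first step $\widehat E\cap W=K\cap W=\widehat K\cap W$. Consequently $\R^n\setminus\widehat E$ agrees with $\R^n\setminus\widehat K$ on $W$, and $\R^n\setminus E^\circ$ agrees with $\R^n\setminus K^\circ$ on $W$. Hence the two sets in each pair are $L$-thin at $x$ simultaneously, and applying Theorem \ref{continuousbyLharm} to the $L$-Mergelyan set $K$ yields the required equivalence at $x$. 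By symmetry the same argument works for $x\in Q$, and a final appeal to Theorem \ref{continuousbyLharm} shows that $E$ is $L$-Mergelyan in $\R^n$.

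The main obstacle is the topological step. Mayer--Vietoris (or Alexander duality) handles it cleanly but imports external algebraic-topological machinery; given the authors' phrase ``more direct proof'', I would expect them to substitute a self-contained path-modification argument in the spirit of Lemma \ref{OmegaMergelyan cup Mergelyan}, specialised to the case where $\R^n\setminus K$ consists of a single unbounded component so that the intersection $U\cap Q$ from that proof becomes all of $Q$.
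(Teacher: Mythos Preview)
Your proof is correct and follows the same two-step skeleton as the paper: first reduce to showing $\widehat{K\cup Q}=K\cup Q$, then invoke the locality of thinness to transfer condition (b) of Theorem \ref{continuousbyLharm} from $K$ and $Q$ separately to their union. The paper's proof is in fact very terse: it simply states that thinness is local, so everything comes down to the connectedness of $X=\R^n\setminus(K\cup Q)$, and then asserts that one verifies this ``by verifying that, for a fixed $x\in X$, the set of $y\in X$ which can be connected to $x$ by a path in $X$ is both open and closed in $X$.'' No further detail is given.

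The only genuine difference is in how the topological step is carried out. You establish connectedness of $\R^n\setminus(K\cup Q)$ cleanly via Mayer--Vietoris (or Alexander duality), which is rigorous but brings in external homological machinery. The paper opts for an elementary path-component argument, which you correctly anticipated in your final paragraph; in fact the paper's version is even more minimal than the path-modification argument of Lemma \ref{OmegaMergelyan cup Mergelyan} and leaves the reader to fill in why the open-and-closed path component must exhaust $X$. Your localisation step is also spelled out more carefully than the paper's single sentence. In short: same strategy, your execution is heavier but more complete on the topological side.
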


\begin{proof}
Let $K_j, \, j=1,2$ be disjoint 
$L-$Mergelyan sets in $\R^n,$ with $\widehat K_j=K_j, \, j=1,2.$ Since thinness is a local condition, it is sufficient to show that $(K_1\cup K_2)^\wedge=K_1\cup K_2.$ Thus, it is sufficient to show that $X=\R^n\setminus (K_1\cup K_2)$ is connected.  One easily shows this by verifying that, for a fixed $x\in X$, the set of $y\in X,$  which can be connected to $x$ by a path in $X,$ is both open and closed in $X.$

\end{proof}

\begin{lemma} \label{Mergelyan2}
Let  $\Omega$ be a domain in $\R^n, n\ge 3,$ $F$ a polar set in $\Omega$ and $L, C,  \mathcal K_j, j=1, 2, \ldots$ as above. For $j=1,2,\ldots,$ the sets
$(\mathcal K_j\cup F\cup C)\cap \mathcal K_{j+1}$ are $L-$Mergelyan sets in $\Omega.$
\end{lemma}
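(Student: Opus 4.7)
The strategy is to reduce the statement to a combination of the preceding Mergelyan-type union results, peeling off the three pieces $\mathcal K_j$, $F$, and $C$ one at a time. Since $\mathcal K_j\subset\mathcal K_{j+1}$, the set in question equals
\[
(\mathcal K_j\cup F\cup C)\cap\mathcal K_{j+1}\;=\;\mathcal K_j\cup F_j\cup C_j,
\qquad F_j:=F\cap\mathcal K_{j+1},\quad C_j:=C\cap\mathcal K_{j+1}.
\]
The set $F_j$ is compact, as a closed subset of the compact set $\mathcal K_{j+1}$, and polar (being a subset of $F$). For $C_j$, I would use two ingredients from the construction of the exhaustion: local finiteness of $C$ forces only finitely many parts of $C$ to meet $\mathcal K_{j+1}$, and $C\cap\partial\mathcal K_{j+1}=\emptyset$ together with the connectedness of each part forces each of those parts to lie entirely in $\mathcal K_{j+1}^\circ$. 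Hence $C_j$ is a \emph{finite} disjoint union of parts of $C$.

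Using $C\cap\partial\mathcal K_j=\emptyset$ in the same way, I would split $C_j=C_j^{\mathrm{in}}\cup C_j^{\mathrm{out}}$ according to whether a part lies in $\mathcal K_j^\circ$ or in $\Omega\setminus\mathcal K_j$. Then $C_j^{\mathrm{in}}\subset\mathcal K_j$ and $C_j^{\mathrm{out}}\cap\mathcal K_j=\emptyset$, so $\mathcal K_j\cup C_j=\mathcal K_j\cup C_j^{\mathrm{out}}$ is a disjoint union. By the definition of a chaplet, each part making up $C_j^{\mathrm{out}}$ is a compact connected $L$-Mergelyan subset of $\R^n$ with connected complement in $\R^n$; iterating Lemma \ref{Mergelyan cup Mergelyan} on this finite disjoint family then yields that $C_j^{\mathrm{out}}$ is a compact $L$-Mergelyan subset of $\R^n$ with connected complement in $\R^n$, i.e.\ $\widehat{C_j^{\mathrm{out}}}=C_j^{\mathrm{out}}$ in $\R^n$.

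On the $\mathcal K_j$ side I would verify $\widehat{\mathcal K_j}=\mathcal K_j$ in $\Omega$: the smoothly bounded exhausting sets $K_j$ in Lemma \ref{Kexhaustion} may be chosen with no holes in $\Omega$, and each component of the removed set $V_j$ is an open ball centered on $\partial K_j$ and therefore meets $\Omega\setminus K_j$, so every component of $\Omega\setminus\mathcal K_j=(\Omega\setminus K_j)\cup V_j$ is unbounded in $\Omega$. All the hypotheses of Lemma \ref{OmegaMergelyan cup Mergelyan} are now in place for the pair $(\mathcal K_j,\,C_j^{\mathrm{out}})$, and it delivers that $\mathcal K_j\cup C_j$ is an $L$-Mergelyan subset of $\Omega$. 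A final application of Corollary \ref{Mergelyan polar} with the compact polar set $F_j$ gives that
\[
(\mathcal K_j\cup C_j)\cup F_j \;=\; (\mathcal K_j\cup F\cup C)\cap\mathcal K_{j+1}
\]
is $L$-Mergelyan in $\Omega$, which is the required conclusion.

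The step I expect to require the most care is the bookkeeping of the no-hole and disjointness conditions needed to invoke Lemma \ref{OmegaMergelyan cup Mergelyan}: separating the parts of $C_j$ into the ``in''/``out'' families so that $C_j^{\mathrm{out}}$ is genuinely disjoint from $\mathcal K_j$ yet absorbs every part of $C_j$ contributing outside $\mathcal K_j$, and confirming that $\mathcal K_j$ itself inherits the property $\widehat{\mathcal K_j}=\mathcal K_j$ in $\Omega$ from the specific construction underlying Lemma \ref{Kexhaustion}; once these are in hand, the rest is a direct composition of the earlier lemmas.
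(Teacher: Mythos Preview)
Your proposal is correct and follows essentially the same three-step route as the paper: first use Lemma~\ref{Mergelyan cup Mergelyan} iteratively to see that $C\cap(\mathcal K_{j+1}\setminus\mathcal K_j)$ (your $C_j^{\mathrm{out}}$) is an $L$-Mergelyan set in $\R^n$ with connected complement, then apply Lemma~\ref{OmegaMergelyan cup Mergelyan} to adjoin $\mathcal K_j$, and finally invoke Corollary~\ref{Mergelyan polar} to absorb the compact polar piece $F\cap\mathcal K_{j+1}$. You are in fact more explicit than the paper about two points it leaves implicit: the verification that $\widehat{\mathcal K_j}=\mathcal K_j$ in $\Omega$ (needed for Lemma~\ref{OmegaMergelyan cup Mergelyan}), and the in/out splitting of $C_j$ that makes the disjointness hypothesis transparent. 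One small inaccuracy: in the construction of Lemma~\ref{Kexhaustion} the balls forming $V_j$ are centered at points of $C_j$, not on $\partial K_j$ itself; but since each component of $V_j$ covers a connected part of $C$ that meets $\partial K_j$, your conclusion that every component of $V_j$ meets $\Omega\setminus K_j$ still holds, and the no-hole argument goes through.
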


\begin{proof} Since the family of compacta whose union is $C$ is locally finite in $\Omega,$ 
by Lemma \ref{Mergelyan cup Mergelyan}, $C\cap (\mathcal K_{j+1}\setminus \mathcal K_j)$ 
is an $L-$Mergelyan set in $\R^n.$ Then, by Lemma \ref{OmegaMergelyan cup Mergelyan} $(\mathcal K_j\cup C)\cap \mathcal K_{j+1}$ is an $L-$Mergelyan set in $\Omega.$ Finally, by Corollary \ref{Mergelyan polar} 
$(\mathcal K_j\cup F\cup C)\cap \mathcal K_{j+1}$ is an $L-$Mergelyan set in $\Omega.$ 

\end{proof}

\begin{proof}{(proof of  Theorem \ref{beads})}
Note that $E$ is  closed in $\Omega$ as it is the union of two closed subsets of $\Omega.$ 
We may rename the compact sets $(\mathcal K_k)_{k=1}^\infty$ in the exhaustion obtained by Lemma \ref{Kexhaustion} (for the chaplet $C$) such that $\mathcal K_1=\emptyset.$

Let $(\epsilon_k)_{k=0}^\infty$ 
be a sequence of positive numbers, such that
$$
\epsilon_{k+1}+\epsilon_{k+2}+\cdots<\epsilon_k, \quad \mbox{for} \quad k=0,1,2, \ldots,
$$
and
$$
2\, \epsilon_{k-1}\le\inf\{\epsilon(y)\, : \,y\in 
(F\cup C)\cap (\mathcal K_{k+1}\setminus \mathcal K_k^\circ)\}, \quad \mbox{for} \quad k=1,2,\ldots. 
$$
Also, suppose $u\in A(E).$

Set $w_0=u$ on 
$(F\cup C)\cap \mathcal K_2.$
By Corollary \ref{Mergelyan polar},
there is a $v_1\in\mathcal H_L(\Omega),$ such that 
$$
|v_1-w_0|<\epsilon_2,\quad \textit{ on } \quad (C\cup F)\cap \mathcal K_2.
$$
Note that on 
$ (F\cup C)\cap \mathcal K_2,$ one has 
$$|v_1-u|=|v_1-w_0|<\epsilon_2<\epsilon_0.$$ 
We define a function $w_1$ in 
$A\big(\mathcal K_2 \cup (C\cap (\mathcal K_3\setminus \mathcal K_{2}))\cup (F\cap \partial \mathcal K_{3})\big),$ 
by setting $w_1=v_1$ on $\mathcal K_2,$ and $w_1=u$ on 
$(C\cap (\mathcal K_3\setminus \mathcal K_{2}))\cup (F\cap \partial \mathcal K_{3}).$ 
By  the Tietze Lemma \ref{Tietze c}, extend $w_1$ 
to $\widetilde w_1,$
from 
$
\mathcal K_2 \cup (C\cap (\mathcal K_3\setminus \mathcal K_{2}))\cup (F\cap \partial \mathcal K_{3})
$ 
continuously to 
$
(\mathcal K_2 \cup F\cup C)\cap \mathcal K_{3}
$ 
such that 
\[
|\widetilde w_1-u|<\epsilon_2
\textit{ on } 
(F\cup C)\cap (\mathcal K_3\setminus \mathcal K_2^\circ).
\]
 We, however, keep denoting the extension $\widetilde w_1$ by $w_1$ to simplify the notation.
Noting that $w_1\in A\big(\mathcal K_2 \cup (C\cap (\mathcal K_3\setminus \mathcal K_{2}))\cup (F\cap \partial \mathcal K_{3})\big),$ by Lemma \ref{Mergelyan2}, there is a function $v_2\in\mathcal H_L(\Omega),$ such that 
$$
|v_2-w_1|<\epsilon_3, \quad \textit{ on } \quad  
(\mathcal K_2 \cup F\cup C)\cap \mathcal K_{3}.
$$
Thus, $|v_2-v_1|<\epsilon_3$ on 
$\mathcal K_2$ and $|v_2-u|<\epsilon_3+\epsilon_2<\epsilon_1$ on 
$
(F\cup C)\cap (\mathcal K_3\setminus \mathcal K_2^\circ).
$

We proceed thus, by induction, to construct a sequence $v_k\in \mathcal H_L(\Omega),$ such that  
$$|v_k-v_{k-1}|<\epsilon_{k+1} \textit{ on } \mathcal K_{k},$$ 
and 
$$
|v_k-u|<\epsilon_{k-1} \textit{ on }
(F\cup C)\cap (\mathcal K_{k+1}\setminus \mathcal K_k^\circ),
$$
for $k=1, 2, \ldots.$

The sequence 
$(v_k)_{k=1}^\infty$ 
is uniformly Cauchy on compact subsets of $\Omega,$ and hence converges uniformly on compact subsets of $\Omega$ to a function $v=\lim v_k$ in $\mathcal H_L(\Omega).$ Theorem \ref{uniform}, yields $v\in\mathcal H_L(\Omega).$

Now suppose $y\in F\cup C.$ By exhaustion, there exists a fixed $k=1, 2, \ldots$ such that $y\in 
(F\cup C)\cap (\mathcal K_{k+1}\setminus \mathcal K_k^\circ).
$ Therefore,
$$
	|v(y)-u(y)| = 
		|v_k(y)-u(y)+\sum_{j=k+1}^\infty (v_{j}-v_{j-1})(y)| \le 
	|v_k(y)-u(y)|+\sum_{j=k+1}^\infty |v_{j}(y)-v_{j-1}(y)|.
$$
Note that since 
$y\in \mathcal K_{k+1},$
 it is in 
 $\mathcal K_j$ 
 for every $j\ge k+1,$ hence
$|v_{j}(y)-v_{j-1}(y)|<\epsilon_{j+1}.$ Also, 
$$|v_k(y)-u(y)|<\epsilon_{k-1}.$$
Thus, the above yields
$$
	|v(y)-u(y)|	\le 
	\epsilon_{k-1}+\sum_{j=k+1}^\infty \epsilon_{j+1}\le 
	\epsilon_{k-1}+\epsilon_{ k+1}\le 2\epsilon_{k-1}\le\epsilon (y).
$$
\end{proof}


\section{Some special domains for $L$-harmonic functions}

\medskip

\subsection{Vertical line domains}
Let us say that a domain $U\subset\R^n=\R^{n-1}\times\R$ is a vertical line domain with 
base $ U',$ if 
$U\cap (\{x'\}\times\R)$ is non-empty and connected, for each $x'=(x_1, \cdots, x_{n-1})\in U'$ and is empty for each $x'\not\in U'.$ 
For $x'\in U^\prime,$ set 
\begin{equation}\label{cddefinition}
\begin{array}{c}
	c(x')=\inf\left\{y:(x', y)=(x_1,\ldots, x_{n-1},y)\in U\right\}\\
	d(x')=\sup\left\{y:(x', y)=(x_1,\ldots, x_{n-1},y)\in U\right\}
\end{array}.
\end{equation}\label{Ucd}
The vertical line domain $U$ can be described as 
\begin{equation}
	U=\{x=(x', y)\in U^\prime\times\R: c(x')<y<d(x')\}.
\end{equation}
Note that $U^\prime$ is the projection $\pi(U)$ of $U$ 
into $\R^{n-1}.$

\begin{lemma} If $U$ is a vertical line domain, the functions $c$ and $d$ given by (\ref{cddefinition}) are respectively  upper and lower semicontinuous. Moreover, we have $c:U'\to[-\infty, +\infty)$ and $d:U'\to(-\infty,+\infty].$
\end{lemma}

\begin{proof}
First of all, we claim that $c(x')<+\infty,$ for each $x'\in U'.$ Indeed,  (\ref{cddefinition}) implies there is a finite value $y,$ such that $c(x')<y$ and consequently $c(x')<+\infty.$ 

Now suppose, to obtain a contradiction, that there is a point $x_0'\in U^\prime,$ at which $c$  is not upper semicontinuous.  Then, there is a neighbourhood $V_0'\subset U'$ of $x_0'$ and a finite value $y_0,$ with 
\begin{equation}\label{semicontinuity}
	  c(x_0')<y_0<c(x'),	\quad	\mbox{for all}	\quad	x'\in V_0'\setminus\{x_0'\}.
\end{equation}
From the definition of $c(x_0'),$ there is a finite value $y_1,$ with  $c(x_0')<y_1<y_0,$ and $(x_0',y_1)\in U.$
Since $U$ is open, it follows that there is a neighbourhood $V'_1\subset V_0'$ of $x_0',$ such that $(x',y_1)\in U,$ for all $x'\in V_1'$ and therefore $c(x')<y_1<y_0,$ for all $x'\in V_1'.$ 
This contradicts (\ref{semicontinuity}) and completes the proof of upper semicontinuity. 

The proof that the function  $d(x')$ is lower semicontinuous is similar. 

\end{proof}

\begin{lemma}\label{smooth}
Let $V$ be a domain in $\R^n,$ and let $c:V\to [-\infty,+\infty)$ and  $d:V\to(-\infty,+\infty]$ be respectively upper and lower semicontinuous functions with $c<d.$ 
Let $X_0=\emptyset.$  
Choose a point $x_1\in V$ and set  $X_1=\{x_1\}.$  Let $X_1, X_2, \ldots,$ be a regular exhauston of $V$ by compact sets. 
Then, there are sequences of smooth functions $c_k: V\to(-\infty,+\infty)$ and $d_k: V\to(-\infty,+\infty),$ 
on $V$ such that $c_k\to c$, $d_k\to d$ and 
$$
	c(x)<c_{k-1}(x)<c_k(x)<d_{k-1}(x)<d_{k}(x)< d(x),\quad \mbox{ for all } x\in X_{k-1},	\quad	\mbox{ for all } k=1,2,\ldots	
$$
\end{lemma}

\begin{proof} 

By Theorem 3 in \cite{T} (see also \cite{HK}) the lower semicontinuous function $d$ is the limit of a monotonically increasing sequence of continuous extended-real functions $f_j:V\to (-\infty,+\infty].$ We  claim  that we may assume that the $f_j$ are strictly less than $d.$ 
Since the problem is purely topological, we may temporarily replace $(-\infty,+\infty]$ by $(0,1].$ Thus, we have continuous functions $f_j:V \to (0,1],$ and a lower semicontinuous function $d:V\to (0, 1],$ with $f_j$ monotonically converging to $d.$ Replace the functions $f_j$ by the functions $jf_j/(j+1),$ which are strictly less than $d.$ This proves our claim on  $(0, 1]$ and hence on $(-\infty,+	\infty].$  

Similarly we can find a sequence $(g_j)$ of real-valued continuous functions on $V$ that are strictly greater than $c$ and $g_j\rightarrow c.$

We shall construct, by induction, a subsequence $u_k= f_{j_k}, \, k=1,2,\ldots,$ of the sequence $(f_j)$ and $v_k= g_{j_k}, \, k=1,2, \ldots,$ of the sequence $(g_j)$ such that
\begin{equation}\label{strict}
	c(x)<v_{k}(x)<v_{k-1}(x)<u_{k-1}(x)<u_{k}(x)<d(x), 	\quad	\forall \,	x\in X_{k-1},	\quad 	\forall \,	k=1,2,3, \ldots, 
\end{equation}
Since $c(x_1)<d(x_1),$ there are  $j_1<j_2$ such that, setting $v_1=g_{j_1}, v_2=g_{j_2, }, u_1=f_{j_1}, u_2=f_{j_2}$ we have  
$$c(x_1)<v_2(x_1)<v_1(x_1)<u_1(x_1)<u_{2}(x_1)<d(x_1).$$
 We have (\ref{strict}) for $k=2.$

Suppose, we have $u_k=f_{j_k}, v_k=g_{j_k}, \, k=2,\ldots, \ell-1$ satisfiying (\ref{strict}).
For each $x\in V,$ we may choose $j_{x}>j_{\ell-1},$ such that 
$$c(x)<g_{j_x}(x)<g_{j_{\ell-1}}(x)<f_{j_{\ell-1}}(x)<f_{j_x}(x)<d(x).$$ 
There is a $\delta=\delta_x>0,$ such that the ball $B(x; \delta)$ is contained in $V$ and 
$$c(y)<g_{j_x}(y)<g_{j_{\ell-1}}(y)<f_{j_{\ell-1}}(y)<f_{j_x}(y)<d(y) \quad   \mbox{ for all }\quad y\in B(x; \delta).$$ 
By compactness, there are finitely many points $x_1,\ldots,x_m\in X_{\ell-1},$
finitely many integers $\nu_1,\ldots,\nu_m>j_{\ell-1}$ and finitely many radii $\delta_1,\ldots,\delta_m,$ such that, setting $j_\ell=\max\{\nu_1,\ldots,\nu_m\},$ we have $X_{\ell-1}\subset B(x_1; \delta_1)\cup\cdots\cup B(x_m; \delta_m)$ and 
$$
		c(x)<v_\ell(x)<v_{\ell-1}(x)<u_{\ell-1}(x)<u_{\ell}(x)<d(x), 	\quad	\mbox{for all}	\quad	x\in X_{\ell-1},
$$
This completes the inductive construction of the sequences $(u_k)$ and  $(v_k).$

Now for $k=1, 2,3 \cdots,$ let 
$$
2\epsilon_{k-1}=\min\{\min_{X_{k-1}}(u_k-u_{k-1}), \min_{X_{k-1}}(u_{k-1}-v_{k-1}), \min_{X_{k-1}}(v_{k-1}-v_{k})\}.
$$ 
Let $d_{k-1}, c_{k-1}$ be polynomials such that $|d_{k-1}-u_{k-1}|<\epsilon_{k-1}$ and $|c_{k-1}-v_{k-1}|<\epsilon_{k-1}$  on $X_{k-1}$ (which exists by a simple application of the Stone-Weierstrass theorem). Clearly, 
$$
c(x)<c_{k-1}(x)<c_k(x)<d_{k-1}(x)<d_{k}(x)< d(x),\quad \mbox{ for all } x\in X_{k-1}.
$$
\end{proof}

\medskip

For a vertical line domain, $U,$
and a function $h:U\to \R,$
let us define the lower, respectively upper, vertical cluster sets for $x^\prime\in \R^{n-1}$
: 
$$
	C_c(h,x^\prime) = \left\{
w\in[-\infty,+\infty ]: \textit{ there exists a sequence } y_j
\searrow c(x^\prime), \,\textit{ such that } h(x^\prime,y_j)\to w
\right\};
$$
and  
$$
	C^d(h,x^\prime) =
	 \left\{
w\in[-\infty,+\infty ]: \textit{ there exists a sequence } y_j
\nearrow d(x'), \,\textit{ such that } h(x^\prime,y_j)\to w
\right\}.
$$

\begin{theorem}\label{vertical EG}
Let $U\subset \R^n, n\ge 3,$ be a  vertical line domain and let $L$ be an operator of the form (\ref{L}) on $U.$ Moreover, 
let $E'=\cup_j E'_j$ be a countable union  of subsets of $U'$ such that the
Euclidean closure of each $E'_j$ has
empty fine interior in $U'.$
Then, for every function $\varphi$ continuous on $U,$ there is an $L-$harmonic  function $u$ on $U$ such that,
\begin{equation}\label{zero}
\big(u-\varphi\big)(x^\prime,y)\to 0, \quad \mbox{if} 
\quad  y\searrow c(x') \quad \mbox{or}
		\quad y\nearrow d(x'), \quad \mbox{for all} \quad  x^\prime \in E'
\end{equation}
and
\begin{equation}\label{cluster}
	C_c(u, x^\prime) =
	C^d(u,x^\prime) = [-\infty,+\infty],
\quad \mbox{for a.e.}  \quad  x^\prime \in U'\setminus E'.
\end{equation}
\end{theorem}

\begin{proof} 
To prove this, we begin by constructing an exhaustion.
Let $X_t$ be a regular exhaustion of $U'$ by smoothly bounded compact sets, with $X_t\subset X^\circ_{t+1}, \, t=1, 2, \ldots.$ Replacing the sequences in Lemma \ref{smooth} by appropriate subsequences, we may assume that $c_k(x')<d_k(x')$ for all $x'\in X_k$ and  $k=1,2,\ldots.$ Set $Q_0=\emptyset$ and define
$$
Q_k=\{(x',y): x'\in X_ k, \, c_k(x')\le y\le d_k(x') \},
$$ 
for $k=1, 2, \ldots.$ 

Consider the closed subsets $F_k, \, k=1, 2,\dots$ of $U$ defined by
$$
F_k= \{(x', y)\ :\ x'\in E_k^\prime\cap X_k, \, c(x')<y \le c_k(x')\}
\cup
 \{(x', y)\ :\ x'\in E_k^\prime\cap X_k, \, d_k(x') \le y<d(x')\}.
$$
Note that $c<c_k$ and  $d_k<d,$ as we have indicated.
Clearly, $F=\cup F_k$ is a closed subset of $U$ as it is a locally finite (countable) family of closed sets in $U$.

Now, let $A'_1\subset A'_2\subset\cdots,$ be an increasing sequence of closed 
subsets of $U'\setminus E',$ such that $m(U'\setminus A'_{k})<1/{k},$ where $m$ is Lebesgue measure on $\R^{n-1}.$ We define closed sets $C_k$ 
as follows
$$
	C_{k}= \left\{\big(x', c_{k}(x')\big): x'\in A'_{k}\cap X_k\right\} \cup \left\{\big(x', d_{k}(x')\big): x'\in A'_k\cap X_k\right\} ,
$$
and set 
 $C=\cup_{k=1}^\infty C_{k}.$

We shall show that $Z=F\cup C$ is an $L-$Carleman set in $U.$ Set 
$$
	  Z_k= Z\cap (Q_k\setminus Q_{k-1}^\circ),   \quad K_k=Q_{k-1}\cup Z_k,. 
$$
Note that $\widehat K_k=K_k.$

There is no relation between the sequences $(E_k')$ and $(X_k).$ However, the corresponding sequences $(F_k\cup C_k)$ and $(Q_k)$ are related as follows.  Denoting the slab $S_k=\{(x',y):c_k(x')\le y \le d_k(x')\},$ we have that $(F_k\cup C_k)\subset U\setminus S_k^\circ$ and $Q_k\subset S_k.$

We shall construct a sequence of functions $P_k\in \mathcal H_L(U),$ which is uniformly Cauchy on the compact subsets $Q_k$ of $U.$ 
In order to carry out this plan, we wish to   use Theorem \ref{continuouslyLharm} to show that  $K_k$   is an $L-$Mergelyan
 set.  Thus, we need to show that $U\setminus\widehat K_k$ and $U\setminus K_k^\circ$ are thin at the same points of $K_k.$  

Clearly, if  $U\setminus K_k^\circ$ is thin at a point, then $U\setminus\widehat K_k$ is also thin at this point.
Conversely, suppose $U\setminus\widehat K_k$  is thin at a point $(x',y)$ of $K_k.$ Since $\widehat K_k=K_k,$ we have $U\setminus K_k$  is thin at  $(x',y).$ If $(x',y)\in K_k^\circ,$ then of course $U\setminus K_k^\circ$ is thin at $(x',y).$ There remains the case that $U\setminus K_k$ is thin at 
$$
	(x',y)\in \partial K_k =\partial Q_{k-1}\cup Z_k
$$
and it is sufficient to show that this case is impossible; that is, that $U\setminus K_k$ is thin at no point of $\partial K_k.$ Since thinness is local, it is sufficient to show that $\R^n\setminus K_k$ is thin at no point of $\partial K_k.$ 
Consider first the case that $(x',y)\in \partial Q_{k-1},$ which consists of three parts: 

$\bullet$ the  {\it side}  of $ Q_{k-1};$
that is,
$$\{(x',y)\in\partial Q_{k-1}:x'\in \partial X_k \},$$
and 

$\bullet$ the 
{\it bottom} and {\it top} of $Q_{k-1};$
that is,
$$
	\{(x',y):x'\in X_k^\circ, \, y=c_k(x') \} \cup \{(x',y):x'\in X_k^\circ, \, y=d_k(x') \}.
$$

A subset $K\subset \R^n$  is said to satisfy the {\it exterior cone condition} at a point $y\in\partial K$ if $y$ is the vertex of an open cone in $\R^n\setminus K.$ 
From the classical Poincar\'e-Zaremba  cone condition  (see e.g. \cite{GilbargTrudinger}), it is sufficient to show that   $\R^n\setminus K_k,$ contains an open cone at each point $(x',y)\in \partial Q_{k-1}.$ 
Since $X_k\times\R$ is smoothly bordered in $\R^n,$ it follows that 
$\R^n\setminus (X_k^\circ\times\R)$ has an open cone at each point $(x',y)\in\partial X_k\times\R,$ from which it follows that $\R^n\setminus K_k$ has an open cone at each point in the side of $Q_{k-1}.$  Similarly, it follows from the smoothness of $c_k$ and $d_k$ that $\R^n\setminus K_k$ has an open cone at each point in the bottom or top of $Q_{k-1}.$ 

Secondly, consider the case that $(x', y)\in Z_k,$ That is, we want 
 to show that $U\setminus K_k$ is thin at no point of $Z_k.$ Suppose, to obtain a contradiction, that  $U\setminus K_k$ is thin at a point $(x',y)$ of $Z_k.$ Since thinness is local,  $\R^n\setminus (K_k\setminus Q_k)=\R^n\setminus Z_k$ is thin at $(x',y),$ so $(x',y)$ is in the fine interior of $Z_k,$ since the $E_j'$ and $C_j$ are locally finite families and the $F_j$ are increasing, it follows that, for some $\ell,$ 
$$
	{Z_k} \subset Q_{k}\cap (F_\ell\cup C_1\cup \cdots \cup C_\ell),
$$
so $(x',y)$ is in the fine interior of $F_\ell\cup C_1\cup\cdots\cup C_\ell.$ Since these are disjoint closed sets, the fine interior of the union is the union of the fine interiors. Since $E_\ell'$ has no fine interior, it follows from Theorem \ref{Fuglede manifolds} that $F_\ell$ has no fine interior.  Hence, some $C_j$ has non-empty fine interior.  
But this  contradicts Lemma \ref{fine sphere}, which concludes the proof that $U\setminus\widehat K_k$ and $U\setminus K_k$ are thin at the same points of $K_k.$ By Theorem \ref{continuouslyLharm}, $K_k$ is an $L-$Mergelyan set. 

A close examination of our argument reveals that, for $k=1,2,\ldots$ and $\ell\ge k,$ also the sets 
$$
	 Z_k\cup\cdots \cup Z_\ell, \quad	\quad	\mbox{and}	\quad	Q_{k-1}\cup
(Z_k\cup\cdots \cup Z_\ell),	
$$
are $L-$Mergelyan sets.

Now, we prove our claim that  $Z$ is a $L-$Carleman set. The idea of the proof is taken from the proof of Carleman's Theorem in 
\cite{Gai} and is originally due to Brelot.

 Let $f\in A_L(Z)=C(Z)$ and $\epsilon$ a positive continuous function on $Z.$ Let $e_0=0$ and $e_n, \, n=1,2,\ldots,$ be a decreasing Cauchy sequence of positive numbers, such that 
$$
2(e_{n+1}+e_n)<
\min
\{\epsilon(x):\, x\in Z_{n+1}
\}, \quad
 \textit{and }	\quad	 \sum_{k=n+1}^\infty e_k<e_n. 
$$
By Theorem \ref{continuouslyLharm} there exists $P_0\in \mathcal H_L(U)$ such that
$$
|P_0-f|<e_2\quad \textit{ on } \, Z\cap Q_3=Z_1\cup Z_2\cup Z_3.
$$
We shall construct by induction $P_1, P_2,\ldots \in  H_L(U),$ which satisfy the condition: 
\begin{equation}\label{induction3}
\begin{array}{cccl}
|P_n(x)-P_{n-1}(x)|& < &e_n, 	&	\forall x\in Q_n;\\

|P_n(x)-f(x)|& <&	e_n + 2e_{n-1},	&	\forall x\in Z\cap (Q_{n+1}\setminus Q_n^\circ)=Z_{n+1};\\

|P_n(x)-f(x)|& < &e_{n+1},	&	\forall x\in 	Z\cap(Q_{n+2}\setminus Q_{n+1}^\circ)=Z_{n+2}
.
\end{array}
\end{equation}
Set $P_1=P_0.$ Then $P_1$ satisfies (\ref{induction3}). Next, suppose we have $P_n$ which satisfy (\ref{induction3}), for $n=1,\ldots,\ell-1.$ 
Put $\varphi=0$ on $Z\cap \partial Q_\ell,$  $\varphi=1$ on $Z_{l+2}=Z\cap (Q_{\ell+2}\setminus Q_{\ell+1}^\circ).$
By  Lemma \ref{Tietze c} (the Tietze Extension Theorem), we may
extend $\varphi$ continuously 
to $Z\cap (Q_{\ell+2}\setminus Q_{l}^{\circ})$
such that
it remains positive and bounded by $1.$  
$$
h(x) = 
\left\{
\begin{array}{ll}
	P_{\ell-1}(x),	&	\forall x\in Q_\ell;\\
	P_{\ell-1}(x)+\varphi(x)(f(x)-P_{\ell-1}(x)),	&	\forall x\in {Z}\cap (Q_{\ell+2}\setminus Q_\ell^{\circ}).
\end{array}
\right . 
$$

By Theorem \ref{continuouslyLharm} 
(since  $Q_\ell\cup(Z\cap (Q_{\ell+2}\setminus Q_\ell^\circ))$ is an $L-$Mergelyan set in $U$), there is $P_\ell\in \mathcal H_L(U)$ such that
$$
	|P_\ell(x)-h(x)|	<	e_{\ell+1}, 	\quad 	\forall x\in Q_\ell\cup \left(Z\cap(Q_{\ell+2}\setminus Q_\ell)\right).
$$
We claim that $P_\ell$ satisfies (\ref{induction3}).  Indeed,
\begin{align*}
	|P_\ell(x)-P_ {\ell-1}(x)|=|P_\ell(x)-h(x)|<& e_{\ell+1}< e_\ell, 	\qquad \forall x\in Q_\ell;\\
	|P_\ell(x)-f(x)| = |P_\ell(x)-h(x)| <& e_{\ell+1}, \qquad \forall x\in Z\cap (Q_{\ell+2}\setminus Q_{\ell+1}^\circ);\\
	|P_{\ell}(x)-f(x)|	\le |P_\ell(x)-h(x)|+|h(x)-f(x)|\le & e_{\ell+1}+2|P_{\ell-1}(x)-f(x)|\\
\leq&  e_{\ell+1} + 2e_\ell\leq e_\ell+2e_{\ell-1}, \qquad \forall x\in Z\cap (Q_{\ell+1}\setminus Q_\ell).
\end{align*}
Since $P_\ell$ satisfies (\ref{induction3}), this concludes the inductive proof of the existence of the sequence $P_1, P_2, \ldots .$ 
Moreover, for every $j$ and all $\ell>k>j,$ 
$$
|P_\ell(x)-P_k(x)|	\le \sum_{n=k+1}^\ell \left|P_n(x)-P_{n-1}(x)\right|	<	 \sum_{n=k+1}^\ell e_n, \quad	\forall x\in Q_j,    
$$
and since $e_1,e_2,\ldots$ is a Cauchy sequence, it follows that the sequence $(P_n)$ is uniformly Cauchy on $Q_j,$ for every $j.$ 
Thus, $P_n\to g,$ for some $g\in \mathcal H_L(U).$ 

To show that $|g-f|\le 1$ on $Z,$ fix $x\in Z$ and let $n$ be the unique integer such that $x\in Z_{n+1}.$  
$$
	|g(x)-f(x)|	=	\left|P_n(x)	+	\sum_{k>n}\left(P_k(x)-P_{k-1}(x)\right)-f(x)\right|	\le 	
		\sum_{k>n}e_k	+	(e_n + 2e_{n-1})	<	2(e_{n-1}+e_n)	<	e(x). 
$$
This concludes the proof that $Z$ is an $L-$Carleman set in $U.$

\bigskip
To prove the claim about the vertical behaviour, let 
$(\eta_k
)_{k=1}^\infty$ be a dense sequence in $(-\infty,+\infty).$  We define a continuous function $\psi$ on $Z=F\cup C,$ by setting $\psi=\varphi$ on $F$ and 
$\psi=\eta_k$ on $C_k$ for $k=1,2,\ldots.$ 
For $x=(x', y)\in U,$ we  define the function $\epsilon:U\to (0,+\infty),$ by setting 
$$
\epsilon(x) = 
\epsilon\left((x', y)\right) =  
\min\left\{
\chi\left(
(x', y), (x', c(x')\right), 
\chi \left((x', y), (x', d(x')\right)
\right\},
$$
Of course $\epsilon(x)$ tends to zero, as $y\searrow c(x'),$ or $y\nearrow d(x'),$ for all $x'\in U'.$

Finally, by the definition of an $L-$Carleman set, there is an $L-$harmonic function $u$ on $U,$ such that
$$|\psi(x)-{ u}(x)|<\epsilon(x),\quad \textit{ for all } \quad x\in Z.$$ 
 
Since $\psi=u$ on $F$ and $\psi=\eta_k$ on $C_k,$ $u$ satisfies  the required properties (\ref{zero}) and (\ref{cluster}).

\end{proof}


\subsection{Radial domains} 
As mentioned earlier, 
Euclidian space $\R^n$  is a Riemannian manifold with the usual distance $d_n$ as Riemannian metric and we consider the sphere $S^{n-1}$ as a Riemannian manifold with the restriction of $d_n$ as Riemannian metric.  This induces a harmonic structure on $S^{n-1}$ and we shall sometimes add the prefix ``spherically" to the corresponding potential theoretic notions on $S^{n-1},$ for example spherically thin, spherically polar and spherically fine topology. Moreover, when it is clear that we are speaking of spherical potential theory on $S^{n-1},$ we may omit the prefex ``spherical''.

By  Theorem \ref{Fug}, if we consider $\pi:\R^n\setminus \{0\}\to S^{n-1}, \, x\mapsto x/|x|$ (i.e. the radial projection), then $E'$ in $S^{n-1}$ is spherically thin at a point $y\in E',$  if and only if $E=\pi^{-1}(E')$ is thin at each point of $\pi^{-1}(y)$ in $\R^n\setminus \{0\}.$

Let $U'$ be a domain in $S^{n-1}$ and $r:U'\to [0,+\infty), \, R:U'\to (0, +\infty]$ be continuous functions, with $r<R.$ 
We call the domain 
$$
U=(U^\prime,r,R)=
\left\{(\theta,\rho)\in S^{n-1}\times \R^+:\theta\in U^\prime, \, r(\theta)<\rho<R(\theta)
\right\}
$$
a {\it radial line domain}.

\begin{theorem}\label{starshell}
Let $U\subset\R^n, n\ge 3,$ be a radial line domain and let $L$ be an operator of the form (\ref{L}) on $U.$
Moreover, 
let $F^\prime=\cup_j F'_j$ be a countable union  of subsets of $U^\prime$ such that the
Euclidean closure of each $F'_j$ has
empty (spherically) fine interior in $U^\prime.$

Then, for every function $\varphi$ continuous on $U,$ there is an $L-$harmonic function $h$ on $U$ such that,
$$
	\big(h-\varphi\big)(\theta,\rho)\to 0, \quad \mbox{as} \quad \rho
	\searrow 
	r(\theta), \quad 
	\mbox{or} \quad \rho
	\nearrow
	 R(\theta) \quad ;\quad \textit{ for all }\quad \theta \in F^\prime.
$$
\end{theorem} 

\begin{proof}
This is not an immediate consequence of Theorem \ref{bundle}, because $U$ is not a trivial harmonic bundle, since the fibre over a point $\theta\in S^{n-1}$ is not the whole line $(-\infty,+\infty),$ but rather an open interval $\big(r(\theta),R(\theta)\big)$ in $[0,+\infty].$ However, the only property  of the projection $\pi$ which we needed in the proof of Theorem \ref{bundle} was that the pull back of 
sets with empty fine interior has empty fine interior (by
Corollary \ref{fine interior})
which is the case here. 

Let $(r_k)_{k=1}^\infty$ be a decreasing and $(R_l)_{l=1}^\infty$ be an increasing sequence of continuous functions on $U'$ such that $r_k\searrow  r$ and $R_k\nearrow R.$ We set 
$$
	F_k = \big\{(\theta,\rho): \theta\in F_k',\, \,
	\rho\in \big(r(\theta),R(\theta)\big)\setminus \big(r_k(\theta),R_k(\theta)\big)\big\},
$$
and $F=\cup_{k=1}^\infty F_k.$ 
Then, as in the proof of Theorem \ref{bundle}, the set $F$ is a Carleman set for harmonic approximation in $U.$ The rest of the proof is like that of Theorem \ref{bundle} but much simpler. 

\end{proof}


\subsection{Strictly Starlike Domains}

We shall say that a domain $D\subset\R^n$ is {\it strictly starlike} with respect to the origin $\bf 0$ if for every $p$ in $\overline D$, and every $0 \le \lambda < 1$,  $\lambda p$ is in the interiour of $D$. 
We only consider starlike domains with respect to the origin so we need not repeat ``with respect to the origin" again. 
An example of a domain, which is starlike but not {\it strictly} starlike is the slit plane $\C\setminus [0, +\infty).$

We first show that a domain $U$ is strictly starlike if and only if there is a unique continuous function $R:S^{n-1}\to [0,+\infty],$ such that, in polar coordinates,
$$
U=
\{x=(\theta, \rho): 0\le \rho<R(\theta), \, \theta\in S^{n-1}\}.
$$

The ``if part" is clear as the above $U$ is a strictly starlike domain. For the the ``only if part" we let $R(\theta)=sup\{\rho : (\theta, \rho) \in U\},$ for $\theta\in S^{n-1}.$ The sup could well be $+\infty,$ for some values of $\theta,$ as the domain is not necessarily bounded.

\begin{lemma}\label{continuous}
$R:S^{n-1}\to[0,+\infty]$ is continuous, where $[0,+\infty]$ has the usual order topology. 
\end{lemma}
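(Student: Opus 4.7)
The plan is to prove continuity by establishing lower and upper semicontinuity separately, handling the finite and infinite values uniformly.

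First I would observe the basic structural facts: since $U$ is a domain containing the origin (it is starlike with respect to $\bf 0$ and nonempty), there is a small ball about $\bf 0$ contained in $U$, so $R(\theta)>0$ everywhere; and by definition of $R$, for each $\theta$ we have $(\theta,\rho)\in U$ whenever $0\le\rho<R(\theta)$ (using strict starlikeness applied to a point $(\theta,\rho')$ with $\rho<\rho'<R(\theta)$, since $\rho/\rho'<1$) and $(\theta,\rho)\notin U$ whenever $\rho>R(\theta)$ (by the sup definition).

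For lower semicontinuity at $\theta_0$, given any $c<R(\theta_0)$ I would pick $c'$ with $c<c'<R(\theta_0)$, note that $(\theta_0,c')\in U$, and use openness of $U$ in $\R^n$ to get a whole neighbourhood of $(\theta_0,c')$ inside $U$. Under the polar-coordinate bijection this neighbourhood contains $(\theta,c')$ for all $\theta$ in some neighbourhood $V$ of $\theta_0$ on $S^{n-1}$, whence $R(\theta)\ge c'>c$ on $V$. This argument also handles the case $R(\theta_0)=+\infty$ by letting $c$ be arbitrarily large.

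For upper semicontinuity, suppose for contradiction that $\theta_n\to\theta_0$ with $\liminf R(\theta_n)=L>R(\theta_0)$. Pick $c$ with $R(\theta_0)<c<L$ (if $R(\theta_0)=+\infty$ there is nothing to prove). Then $R(\theta_n)>c$ for all large $n$, so $(\theta_n,c)\in U$, and passing to the limit gives $(\theta_0,c)\in\overline U$. Here is where I use the hypothesis of strict starlikeness: for any $\lambda\in(0,1)$ the point $\lambda(\theta_0,c)=(\theta_0,\lambda c)$ lies in $U$, so $R(\theta_0)\ge \lambda c$; letting $\lambda\to 1$ gives $R(\theta_0)\ge c$, contradicting $c>R(\theta_0)$.

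Combining the two semicontinuity statements shows $R(\theta_n)\to R(\theta_0)$ whenever $\theta_n\to\theta_0$, i.e.\ $R$ is continuous into $[0,+\infty]$ with the order topology. The only delicate point is the upper semicontinuity step, where plain openness is insufficient; the strict starlikeness hypothesis is exactly what is needed to upgrade the limit membership $(\theta_0,c)\in\overline U$ to an interior statement that contradicts $R(\theta_0)<c$.
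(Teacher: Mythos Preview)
Your proof is correct and follows essentially the same approach as the paper: both establish lower semicontinuity from the openness of $U$ and upper semicontinuity by producing a point $(\theta_0,c)\in\overline U$ with $c>R(\theta_0)$ and then invoking strict starlikeness to force $R(\theta_0)\ge c$, a contradiction. Your presentation is slightly more explicit about the case $R(\theta_0)=+\infty$ and about the role of strict starlikeness, but the underlying argument is the same.
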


\begin{proof}
We first show that $R$ is lower semicontinuous in $S^{n-1}$. Suppose, to obtain a contradiction, that there is a point $\theta\in S^{n-1},$ a finite value $s< R(\theta)$ and a sequence $\theta_j\to \theta$ (in $S^{n-1}$) such that $R(\theta_j)<s.$ Since $U$ is open, it follows that no point in the segment $\{(\theta, r): s<r<R(\theta)\}$ lies in $U,$ which contradicts the definition of $R(\theta).$ Note that the argument is valid even if $R(\theta)=+\infty.$

We now need to show the upper semicontinuity of $R$ in $S^{n-1}$. Fix $\theta\in S^{n-1}.$ If $R(\theta)=+\infty,$ upper semicontinuity at $\theta$ is automatic. Suppose $R(\theta)<+\infty$ 
and suppose, to obtain a contradiction, that there is a sequence $\theta_j\to \theta$ and $\epsilon>0,$ such that $R(\theta_j)>c       R(\theta)+\epsilon.$ Then, for $s=R(\theta)+\epsilon,$ $(\theta, s_j)\in U.$ 
However, $(\theta, s)\not\in U.$ Therefore, $(\theta, s)\in \partial U.$ This contradicts the definition of $R(\theta),$ as $U$ is a strictly starlike domain.

\end{proof}
Taking into account such a parametrization, we shall denote a strictly starlike domain $U$ as $U=(S^{n-1}, R).$ We wish to study the radial behaviour of harmonic functions on strictly starlike domains and we begin with a theorem on the radial limits of continuous functions. 
By a measurable set in a topological space, we mean, unless otherwise specified, a Borel measurable set; by a measurable function between two topological spaces, we mean a Borel measurable function and by a Borel measure on a  locally compact Hausdorff space, we mean  a measure defined on the class of Borel sets, such that the measure of compact sets is finite.

\begin{theorem}
Let $U=(S^{n-1}, R)$ be a strictly starlike domain in $\R^n, n\ge 2,$ and $\Psi:U\to (-\infty, +\infty)$ a 
continuous function.
For $\theta\in S^{n-1}$ we denote $\Psi^*(\theta)\in[-\infty, +\infty]$ the radial limit of $h$ on the ray $\{(\theta, r) : 0\leq r <R(\theta)\}.$
Let $A\subset S^{n-1}$ be the set of all $\theta\in S^{n-1}$ for which $\Psi^*(\theta)$ exists.
Then $A$ is a measurable set and the function $\Psi^*:A\to [-\infty, +\infty]$ is of Baire class $0$ or $1.$
\end{theorem}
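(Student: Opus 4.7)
The plan is to reduce the statement to a standard fact about the boundary behaviour of a continuous function on $S^{n-1}\times[0,1),$ via a continuous reparametrization of the radial coordinate. Since $R:S^{n-1}\to[0,+\infty]$ is continuous (Lemma \ref{continuous}), the composition $\widetilde R(\theta)=\arctan R(\theta)$ is continuous into $(0,\pi/2].$ Setting $\rho(\theta,t)=\tan\bigl(t\,\widetilde R(\theta)\bigr)$ for $(\theta,t)\in S^{n-1}\times[0,1),$ the map $t\mapsto\rho(\theta,t)$ is a homeomorphism of $[0,1)$ onto $[0,R(\theta)),$ and $\rho$ is globally continuous, uniformly handling the rays where $R(\theta)=+\infty.$ Consequently $\widetilde\Psi(\theta,t):=\Psi(\theta,\rho(\theta,t))$ is continuous on $S^{n-1}\times[0,1),$ and $\Psi^*(\theta)$ exists in $[-\infty,+\infty]$ iff $\lim_{t\nearrow 1}\widetilde\Psi(\theta,t)$ exists, with the two limits agreeing when they do.

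Next I would realize $A$ as a Borel set by writing it as the coincidence set of $\limsup$ and $\liminf.$ For each positive integer $k,$ put
$$
g_k(\theta)=\sup_{t\in[1-1/k,1)\cap\Q}\widetilde\Psi(\theta,t),\qquad h_k(\theta)=\inf_{t\in[1-1/k,1)\cap\Q}\widetilde\Psi(\theta,t).
$$
By continuity of $\widetilde\Psi$ in $t,$ these rational sup/inf agree with the corresponding sup/inf over the entire interval, and each $g_k$ (respectively $h_k$) is a countable supremum (infimum) of continuous functions of $\theta,$ hence Borel. Then
$$
\overline\Psi(\theta):=\inf_k g_k(\theta)=\limsup_{t\nearrow 1}\widetilde\Psi(\theta,t),\qquad \underline\Psi(\theta):=\sup_k h_k(\theta)=\liminf_{t\nearrow 1}\widetilde\Psi(\theta,t),
$$
are Borel measurable functions $S^{n-1}\to[-\infty,+\infty],$ and therefore $A=\{\overline\Psi=\underline\Psi\}$ is Borel.

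Finally I would exhibit $\Psi^*$ as a pointwise limit of continuous functions. Define $\psi_k(\theta):=\widetilde\Psi(\theta,1-1/k),$ which is continuous on all of $S^{n-1}.$ For $\theta\in A$ the sequence $1-1/k$ tends to $1$ from below, so $\psi_k(\theta)\to\Psi^*(\theta)$ in $[-\infty,+\infty].$ Hence $\Psi^*$ is the pointwise limit on $A$ of the continuous restrictions $\psi_k|_A,$ which places it in Baire class at most $1$ (and in class $0$ precisely when this limit happens to be continuous on $A$).

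The main obstacle I anticipate is purely bookkeeping: ensuring that the reparametrization $\rho$ remains continuous at points $\theta_0$ where $R(\theta_0)=+\infty,$ which rules out naive rescalings such as $tR(\theta);$ the $\arctan/\tan$ device is chosen precisely to handle this uniformly. A secondary subtlety is that the range of $\Psi^*$ is the compactified line $[-\infty,+\infty];$ we interpret Baire class via its order topology, under which $[-\infty,+\infty]$ is homeomorphic to a compact interval, so the classical notions of measurability and Baire class transfer without modification.
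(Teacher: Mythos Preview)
Your proof is correct and follows essentially the same approach as the paper: both reparametrize the radial coordinate to reduce to a continuous function on $S^{n-1}\times[0,1)$ (the paper uses the chordal distance $\chi(0,\rho)$ in place of your $\arctan/\tan$ device), establish Borel measurability of $A$ by expressing it through countable set operations (the paper uses a Cauchy-type oscillation condition rather than your $\limsup=\liminf$ formulation), and then exhibit $\Psi^*$ as the pointwise limit on $A$ of the continuous functions $\theta\mapsto\widetilde\Psi(\theta,t_k)$ for a sequence $t_k\nearrow 1$. Your treatment of the case $R(\theta)=+\infty$ via $\arctan$ is arguably cleaner, and your $\limsup/\liminf$ description of $A$ automatically absorbs the paper's preliminary observation that a radial limit tending to infinity must go to either $+\infty$ or $-\infty$.
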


\begin{proof}
If the radial limit along some ray is infinite then it is either $-\infty$ or $+\infty.$ 
Indeed, suppose $\Psi(\theta, r)\to\infty$ as $r\to R(\theta),$ and there are sequences $r^+_j\to R(\theta)$ and $r^-_j\to R(\theta),$ such that $\Psi(\theta, r^+_j)\to+\infty$ and $\Psi(\theta, r^-_j)\to-\infty.$ By the intermediate value theorem, there is a sequence $r^\circ_j\to R(\theta),$ such that $\Psi(\theta, r^\circ_j)=0.$ This contradicts the assumption that $\Psi(\theta, r)\to\infty$ as $r\to R(\theta).$

Consider the homeomorphism
$$
	h:U\longrightarrow\B^n, \quad x=(\theta,\rho)\longmapsto \big(\theta,\chi(0,\rho)\big),
$$
where $\chi$ denotes the {\it chordal distance}.
Let $(r_j)_{j=1}^\infty$ be a sequence increasing to $1$ in $[0,1).$ We introduce the continuous functions 
$$
	R_j: S^{n-1}\to [0,+\infty), \quad \theta\mapsto |h^{-1}(\theta,r_j)|, \quad j=1, 2, \ldots.
$$
Note that $0<R_j\le R_{j+1},$ and $R_j\to R,$ as $j\to \infty.$ 
 Then, $\theta\in A$ if and only if
$$
	\sup\{\chi\big(\Psi(\theta,R_j(\theta)),\Psi(\theta,\rho)\big):R_j(\theta)\le \rho<R(\theta)\}\to 0, \quad \mbox{as} \quad j\to \infty. 
$$
Putting $\Q_j=\Q\cap [R_j(\theta),R(\theta)),$ this is equivalent to 
$$
	\sup
	\left\{
	\chi\big(\Psi(\theta,R_j(\theta)),\Psi(\theta,\rho)\big):\rho\in \Q_j
	\right\}
	\to 0, \quad \mbox{as} \quad j\to \infty.
$$
Thus,
$$
	A = \bigcap_{i=1}^\infty\bigcup_{j=1}^\infty\bigcap_{\rho\in\Q_j}\left\{\theta\in S^{n-1}:\chi\big(\, \Psi(\theta, R_j(\theta)), \Psi(\theta, \rho)\,\big)\le \frac{1}{i}\right\},
$$
and since each function $\chi\big(\, \Psi(\theta, R_j(\theta)), \Psi(\theta, \rho)\big)$ is continuous in $\theta,$ it follows that $A$ is measurable and in fact an $F_{\delta\sigma\delta}-$set.

Finally, the function $\Psi^*:A\to[-\infty,+\infty]$ is of Baire class $0$ or $1,$ since it is the limit of a sequence of continuous functions; that is,
$$
	\lim_{j\to\infty}\Psi\big(\theta,R_j(\theta)\big)=\Psi^*(\theta), \quad \mbox{for all} \quad \theta\in A.
$$

\end{proof}

\begin{lemma}\label{exhaustion}
Let $U=(S^{n-1}, R)$ be a strictly starlike domain in $\R^n, n\ge 2.$ Then, there exists  a regular exhaustion of $U$ by compact sets $Q_1, Q_2, \ldots,$
which are starlike and satisfy the exterior cone condition. 
\end{lemma}

\begin{proof} 
First of all, there exists a strictly increasing sequence of continuous functions $\phi_k:S^{n-1}\to(0,+\infty), \, \phi_k(\theta)\nearrow R(\theta).$ Fix $k,$  $\theta_0\in S^{n-1},$  $\epsilon,$ with $\epsilon<\phi_{k+1}(\theta_0)-\phi_k(\theta_0)$ and $0<\alpha<1.$
Set $\rho_0=\phi_{k+1}(\theta_0), \, \rho_\epsilon=\rho_0-\epsilon, \, x_0=(\theta_0,\rho_0)$ and $x_\epsilon=(\theta_0,\rho_\epsilon).$ 
Denote by $C(\theta_0,\alpha,\epsilon),$ the open cone with vertex $x_\epsilon:$
$$
	C(\theta_0,\alpha,\epsilon)  =	\{x=(\theta,\rho)\ :\  \rho>\rho_\epsilon,\ |x-x_\epsilon|<\alpha
	(\rho-\rho_\epsilon)
	\}.	
$$ 
Choose $\epsilon$ so small that the closed ball of center $x_0$ and radius $\epsilon$ is disjoint from the compact set
 $K_k=\{(\theta, 	\rho):\rho\le\phi_k(\theta)\}.$ Now, choose $\alpha$ so small that the cone $C(\theta_0,\alpha,\epsilon)$ is disjoint from $K_k.$ The cone $C(\theta_0,\alpha,\epsilon)$ is an open neighbourhood of $x_0,$ 
and note that $x_0$ is an arbitrary point on the boundary of $K_{k+1}.$
  By compactness, there is a finite number of such cones $C_{k,1},	\ldots, C_{k,m_k}$ which covers $\partial K_{k+1}.$ 
Set  
$$
	Q_k=\R^n\setminus (C_{k,1}\cup\cdots\cup C_{k,m_k}).
$$

\end{proof}

For a strictly starlike domain $U=(S^{n-1}, R)$ in $\R^n,$  
a function $u:U\to[-\infty,+\infty]$ and a point $\theta\in S^{n-1},$ denote by $C(u,\theta)$ the {\it radial cluster set}
$$
C(u,\theta) = \left\{
w\in[-\infty,+\infty ]: \textit{ there exists a sequence } r_j(\theta)\nearrow R(\theta), \,\textit{ such that } u(r_j,\theta)\to w
\right\}.
$$

\begin{theorem}\label{starlike EG}
Suppose $U=(S^{n-1}, R)$ is a strictly starlike domain in $\R^n, \, n\ge 3,$ and let $L$ be an operator of the form (\ref{L}) on $U.$ Moreover, 
let $E^\prime=\cup_j E'_j$ be a countable union  of subsets of $S^{n-1}$ such that the
Euclidean closure of each $E'_j$ has
empty fine interior in $S^{n-1}.$
Then, for every function $\varphi$ continuous on $U,$ there is an $L-$harmonic  function $h$ on $U$ such that, 
$$ 
		\lim_{\rho \nearrow R(\theta)}\big(h-\varphi\big)((\theta, \rho))=0, \quad \mbox{for all} \quad \theta\in E^\prime,
$$
and
$$
	C(h,\theta) = [-\infty,+\infty], \quad \mbox{for a.e.}  \quad  \theta \in S^{n-1}\setminus E^\prime.
$$
\end{theorem}

\begin{proof}

It follows from Lemma \ref{A union B} that 
we may assume that the $E^\prime_k$ are closed and increasing, noting that the Euclidean closure is the same as the $S^{n-1}-$closure.
We may assume that the sequence $(E^\prime_j)_{j=1}^\infty$ is infinite, because if $E^\prime=E_1^\prime\cup\cdots \cup E_k^\prime,$ we may set $E_j^\prime=E_k^\prime,$ for all $j\ge k.$ 
Let $Q_k, \, k=1,2,\ldots,$ be an exhaustion of $U,$ as in Lemma \ref{exhaustion}. Since each $Q_k$ is starlike, there is a strictly increalsing sequence of defining functions $R_k:S^{n-1}\to (0,+\infty), \, R_k\nearrow R,$ such that 
$$
	Q_k = \{(\theta,\rho):\theta\in S^{n-1}, \, 0\le\rho\le R_k(\theta)\}.
$$   

Consider the closed subsets of $U$  defined by
$$
F_k= \{(\theta,\rho):\theta\in E_k^\prime, \, R_k(\theta)\le\rho<R(\theta)\}, \quad \, k=1, 2,\dots.
$$
and set $F=F_1\cup F_2 \cup \ldots .$

Now, let $A'_1\subset A'_2\subset\cdots,$ be an increasing sequence of compact subsets of $S^{n-1}\setminus E',$ such that $m(S^{n-1}\setminus A'_j)<1/j,$ where $m$ is Hausdorff $(n-1)-$measure on $S^{n-1}.$ We define closed {\it spherical caps} $C_j$ as follows
$$
	C_j= \left\{\big(\theta, R_j(\theta)\big): \theta\in A'_j\right\},
$$
Note that $C_j\subset\partial Q_j.$ Set $C=\cup_{j=1}^\infty C_j.$ 

A modification of the proof of Theorem \ref{vertical EG} shows that $F\cup C$ is an
$L-$Carleman set in $U.$
To prove the claim about the radial behaviour, let 
$(c_j)_{j=1}^\infty$ be a dense sequence in $(-\infty,+\infty).$  We define a continuous function $\psi$ on $F\cup C,$ by setting $\psi=\varphi$ on $F$ and $\psi=c_j$ on $C_j$ for $j=1,2,\ldots.$ 

For $x=(\theta,\rho)\in U,$ define the function $\epsilon:U\to (0,+\infty),$ by setting 
$$
\epsilon(x) = \epsilon\left((\theta,\rho)\right) = \chi\left((\theta,\rho),(\theta,R(\theta)\right),
$$
 where $R:S^{n-1}\to(0,+\infty]$ is the defining function for $U,$ which is continuous by Lemma \ref{continuous}. Then, $\chi$ is also
continuous, since $(0,+\infty)\times(0,+\infty)\subset \overline\C\times\overline\C$ and the spherical distance $\chi$ is continuous on the Riemann sphere $\overline\C.$ Of course $\epsilon(x)=\epsilon(\theta,\rho)$ tends to zero, as $\rho\to R(\theta),$ for all $\theta\in S^{n-1}.$

Finally, by the definition of a $L-$Carleman set, there is an $L-$harmonic function $h$ on $U,$ such that
$$|\psi(x)-h(x)|<\epsilon(x),\quad \textit{ for all } x\in  F\cup C.$$ 

\end{proof}


\section*{Acknowledgment}
The junior author would like to thank the senior author for introducing the problem and for his generous support. Also he is grateful to Dr. Dmitry Jakobson and Dr. Javad Mashreghi for the financial support, and hospitality, during his postdoctoral studies at McGill University. 

\medskip


\begin{thebibliography}{11}




\bibitem{AG} 
\textsc{Armitage, D. H., and Gardiner, S. J.} 
{\it Classical Potential Theory,} 
Springer Monographs in Mathematics, Springer-Verlag London, 2001. 



\bibitem{Aro} 
\textsc{Aronszajn, N.}
{\it Sur l’unicité du prolongement des solutions des équations aux dérivées partielles
elliptiques du second ordre.}
C. R. Acad. Sci. Paris {\bf 242} (1956), 723-725.


\bibitem{BB} 
\textsc{Bagby, T. and Blanchet, P.} 
{\it Uniform harmonic approximation on Riemannian manifolds.} J. Anal. Math. {\bf 62} (1994), 47-76.


\bibitem{BG} 
\textsc{Bagby, T. and Gauthier, P. M.} 
{\it Harmonic approximation on closed subsets of Riemannian manifolds,} 
Complex potential theory (Montreal, PQ, 1993), 75-87, NATO Adv. Sci. Inst. Ser. C Math. Phys. Sci., {\bf 439}, Kluwer Acad. Publ., Dordrecht, 1994.




\bibitem{BS} \textsc{Bagemihl, F. and Seidel, W.} 
{\it Some boundary properties of analytic functions}. 
Math. Z. {\bf 61}, 186--199 (1954).

\bibitem{BW} 
\textsc{Baird, P. and Wood, J. C.}
{\it Harmonic Morphisms Between Riemannian Manifolds},
Published in the United States by Oxford University Press Inc., New York, 2003.








\bibitem{BH1978} \textsc{Bliedtner, J. and  Hansen, W.} {\it Simplicial cones in potential theory. II. Approximation theorems.} Invent. Math. {\bf 46} (1978), no. 3, 255-275. 
\color{black}








\bibitem{Br1958} \textsc{Brelot, M.} {\it Extension axiomatique des fonctions sous-harmoniques II.}  C. R. Acad. Sci. Paris {\bf 246} (1958), 2334-2337. 


\bibitem{Br1960} \textsc{Brelot, M.} {\it Lectures on potential theory}, Tata Institute, Bombay, 1960. 


\bibitem{Br1969} \textsc{Brelot, M.} {\it Axiomatique des fonctions harmoniques.} Deuxième édition, Séminaire de Mathématiques Supérieures, No. {\bf 14} (Été, 1965) Les Presses de l'Université de Montréal, Montreal, Que. 1969






\bibitem{CC1965} \textsc{Constantinescu, C.; Cornea, A.} {\it Compactifications of harmonic spaces.} Nagoya Math. J. {\bf 25} (1965), 1-57.




\bibitem{CC} \textsc{Constantinescu, C.; Cornea, A.}
{\it Potential theory on harmonic spaces.} 
With a preface by H. Bauer. 
Die Grundlehren der mathematischen Wissenschaften, Band 158. Springer-Verlag, New York-Heidelberg, 1972. 




\bibitem{EG} \textsc{Essén, M.; Gardiner, S. J. }
{\it Limits along parallel lines and the classical fine topology.}
J. London Math. Soc. (2) {\bf 59} (1999), no. 3, 881-894.













\bibitem{Fug} \textsc{Fuglede, B.}
{\it Harmonic Morphisms applied to classical potential theory.}
Nagoya Math. J. 202 (2011), 107-126, DOI 10.1215/00277630-1260468.


\bibitem{Gai} \textsc{Gaier, D.} 
{\it Lectures on complex approximation}.
Translated from the German by Renate McLaughlin. Birkh\"auser Boston, Inc., Boston, MA, 1987.


\bibitem{Gar1995a} \textsc{Gardiner, S. J.} 
{\it Harmonic Approximation,} 
London Mathematical Society Lecture Note Series Book {\bf 221}, 1995.






\bibitem{GGG1994_compact}
\textsc{Gardiner, S. J. ; Goldstein, M. ; GowriSankaran, K.}
{\it Global approximation in harmonic spaces.}
Proc. Amer. Math. Soc. {\bf 122} (1994), no. 1, 213-221.




\bibitem{GGG1994_tangential}
\textsc{Gardiner, S. J. ; Goldstein, M. ; GowriSankaran, K.}
{\it Tangential approximation in harmonic spaces}. 
Indiana Univ. Math. J. 43 (1994), no. 3, 1003–1012.



\bibitem{Gau} \textsc{Gauthier, P. M.} 
{\it Uniform approximation}. 
Complex potential theory (Montreal, PQ, 1993), 235-271, NATO Adv. Sci. Inst. Ser. C Math. Phys. Sci., {\bf 439}, Kluwer Acad. Publ., Dordrecht, 1994.





\bibitem{GS}  
\textsc{Gauthier, P. M. and Shirazi, M.} 
{\it Asymptotic first boundary value problem for holomorphic functions of several complex variables.} 
Can. Math. Bull. p. 1-20
DOI: https://doi.org/10.4153/S0008439521000321.


\bibitem{GSFields}
\textsc{Gauthier, Paul. M. and M. Shirazi}
{\it Radial limits of functions holomorphic in $\C^n$ or the polydisc,}
Submitted, Feb 2022.


\bibitem{GilbargTrudinger}
\textsc{Gilbarg, David. and Trudinger, Neil S.}
{\it Elliptic Partial Differential Equations of Second Order.} 
Grundlehren der mathematischen Wissenschaften, Vol {\bf 224}, Springer-Verlag, Berlin-Heidelberg 1977.






\bibitem{H}  \textsc{Halmos, P. R.}
{\it Measure Theory,} 
D. Van Nostrand Company, Inc., New York, N. Y., 1950. 



\bibitem{Han} \textsc{Hansen, W.} {\it
Harmonic and superharmonic functions on compact sets.}
Illinois J. Math. {\bf 29} (1985), no. 1, 103-107.


\bibitem{HK}  
 \textsc{Hayman, W. K.; Kennedy, P. B.} {\it Subharmonic functions. Vol. I.} London Mathematical Society Monographs, No. {\bf 9}. Academic Press [Harcourt Brace Jovanovich, Publishers], London-New York, 1976.



\bibitem{Hervethesis}
\textsc{Herv\'e, R. M.}
{\it
Recherches axiomatiques sur la théorie des fonctions
surharmoniques et du potentiel,
}
Annales de l'institut Fourier, tome {\bf 12} (1962), p. 415--571.

\bibitem{Herve}\textsc{Herv\'e, R. M.}
{\it
Quelques propri\'et\'es du faisceau de fonctions harmoniques associ\'e a un op\'erateur elliptique d\'eg\'en\'er\'e.} (French.)
Ann. Inst. Fourier (Grenoble) 25 (1975), no. 3-4, xxi, 245--261.



\bibitem{KrMi} 
\textsc{Krantz, S. G.  and Min, B.}
{\it Approximation by holomorphic functions of several complex variables,} 
J. Korean Math. Soc. 57 (2020), No. 5, pp. 1287--1298.


\bibitem{La} 
\textsc{Landkof. N.S.} 
{\it Foundations of modern potential theory,}
Translated from the Russian by A.P. Doohovskoy.
Berlin, New York, Springer-Verlag, 1972.







\bibitem{P} \textsc{de La Pradelle, A.} 
{\it Approximation et caractère de quasi-analyticité dans la théorie axiomatique des fonctions harmoniques.} (French) Ann. Inst. Fourier (Grenoble) {\bf 17} (1967), fasc. 1, 383-399. 










\bibitem{Singh} \textsc{Singh, T. B.} 
{\it Introduction to Topology,} 
Springer Nature Singapore Pte Ltd. 2019.


\bibitem{W} \textsc{Wage, M. L.}
{\it A generalization of Lusin's theorem}. 
Proc. Amer. Math. Soc. 52 (1975), 327--332.
		

\end{thebibliography}
\end{document}